\documentclass[reqno,12pt]{amsart}


\usepackage{amsmath}
\usepackage{amssymb}
\usepackage{amsfonts}
\usepackage{mathtools}
\usepackage{enumerate}
\usepackage{comment}

\allowdisplaybreaks

\usepackage{amsthm}
\usepackage{amscd}
\usepackage{graphicx}

\usepackage{palatino}
\usepackage[T1]{fontenc}
\usepackage[mathscr]{eucal}
\usepackage{dsfont}

\usepackage{fullpage}

\usepackage{acronym}
\usepackage{latexsym}
\usepackage{paralist}
\usepackage{wasysym}
\usepackage{xspace}

\usepackage[dvipsnames,svgnames]{xcolor}
\colorlet{MyBlue}{DodgerBlue!60!Black}
\colorlet{MyGreen}{DarkGreen!85!Black}

\usepackage[font=small,labelfont=bf]{caption}
\usepackage{subfigure}
\usepackage{tikz}
\usetikzlibrary{calc,patterns}

\usepackage[authoryear,comma]{natbib}

\usepackage{hyperref}
\hypersetup{
	colorlinks=true,
	linktocpage=true,
	pdfstartview=FitH,
	breaklinks=true,
	pdfpagemode=UseNone,
	pageanchor=true,
	pdfpagemode=UseOutlines,
	plainpages=false,
	bookmarksnumbered,
	bookmarksopen=false,
	bookmarksopenlevel=1,
	hypertexnames=true,
	pdfhighlight=/O,
	urlcolor=MyBlue!60!black,linkcolor=MyBlue!70!black,citecolor=DarkGreen!70!black, 
	pdftitle={},
	pdfauthor={},
	pdfsubject={},
	pdfkeywords={},
	pdfcreator={pdfLaTeX},
	pdfproducer={LaTeX with hyperref}
}

\numberwithin{equation}{section}  
\usepackage[sort&compress,capitalize,nameinlink]{cleveref}
\crefname{app}{Appendix}{Appendices}

\crefrangeformat{equation}{\upshape(#3#1#4)\textendash(#5#2#6)}

\usepackage[textwidth=30mm]{todonotes}
\usepackage{soul}
\setstcolor{red}
\sethlcolor{SkyBlue}

\newcommand{\debug}[1]{{\color{black}#1}}

\theoremstyle{plain}
\newtheorem{theorem}{Theorem}
\newtheorem{corollary}[theorem]{Corollary}
\newtheorem*{corollary*}{Corollary}
\newtheorem{lemma}[theorem]{Lemma}
\newtheorem{proposition}[theorem]{Proposition}

\theoremstyle{definition}
\newtheorem{definition}[theorem]{Definition}
\newtheorem*{definition*}{Definition}

\newtheorem*{hypothesis*}{Hypothesis}

\theoremstyle{remark}
\newtheorem{remark}[theorem]{Remark}
\newtheorem*{remark*}{Remark}
\newtheorem*{notation*}{Notational remark}

\numberwithin{theorem}{section}



\newcommand{\x}{\boldsymbol{x}}
\newcommand{\y}{\boldsymbol{y}}

\newcommand{\Expect}{\mathbf{\debug E}}
\newcommand{\Prob}{\mathbf{\debug P}}
\renewcommand{\Pr}{\mathsf{\debug P}}

\DeclareMathOperator{\Exp}{\mathsf{\debug{E}}}






\newcommand{\var}{{\rm Var} }

\newcommand{\whp}{\textit{whp}}
\newcommand{\ind}{\mathds{1}}
\newcommand{\p}{\mathfrak{p}}
\DeclareMathOperator{\tx}{\textnormal{\texttt{tx}}}

 
\newcommand{\cB}{\ensuremath{\mathcal B}} 
 
\newcommand{\cD}{\ensuremath{\mathcal D}} 
\newcommand{\cE}{\ensuremath{\mathcal E}} 
\newcommand{\cF}{\ensuremath{\mathcal F}} 
\newcommand{\cG}{\ensuremath{\mathcal G}} 
\newcommand{\cH}{\ensuremath{\mathcal H}} 
 
\newcommand{\cJ}{\ensuremath{\mathcal J}}

\newcommand{\cP}{\ensuremath{\mathcal P}}

\newcommand{\cT}{\ensuremath{\mathcal T}}


\newcommand{\E}{\ensuremath{\mathbb{E}}}

\newcommand{\N}{\ensuremath{\mathbb{N}}}
\newcommand{\Z}{\ensuremath{\mathbb{Z}}}

\newcommand{\R}{\ensuremath{\mathbb{R}}}
\renewcommand{\P}{\ensuremath{\mathbb{P}}}

\newcommand{\dd}{\mathrm{d}}
\newcommand{\ee}{\mathrm{e}}
 
\def\({\left(}
\def\){\right)}
\def\[{\left[}
\def\]{\right]}



\newacro{NE}{Nash equilibrium}
\newacroplural{NE}[NE]{Nash equilibria}
\newacro{PNE}{pure Nash equilibrium}
\newacroplural{PNE}[PNE]{pure Nash equilibria}
\newacro{MNE}{mixed Nash equilibrium}
\newacroplural{MNE}[MNE]{mixed Nash equilibria}
\newacro{PFNE}{prior-free Nash equilibrium}
\newacroplural{PFNE}[PFNE]{prior-free Nash equilibria}
\newacro{WE}{Wardrop equilibrium}
\newacroplural{WE}[WE]{Wardrop equilibria}
\newacro{SO}{socially optimum}
\newacro{SU}{social utility}
\newacro{BEq}{best equilibrium}
\newacro{WEq}{worst equilibrium}
\newacro{KKT}{Karush\textendash Kuhn\textendash Tucker}
\newacro{OD}[O/D]{origin-destination}
\newacro{PoA}{price of anarchy}
\newacro{PoS}{price of stability}
\newacro{PoCS}{price of correlated stability}
\newacro{BPR}{bureau of public roads}
\newacro{FIP}{finite improvement property}
\newacro{CLT}{central limit theorem}
\newacro{BPG}{buck-passing game}
\newacro{SBPG}{stochastic buck-passing game}
\newacro{MBPG}{mixed extension of the buck-passing game}

\newcounter{constant}
\newcommand{\nc}[1]{\refstepcounter{constant}\label{#1}}
\newcommand{\uc}[1]{c_{\ref{#1}}}
\setcounter{constant}{-1}

\begin{document}
\title[Voter model on regular random graphs]{Discordant edges for the voter model\\ on regular random graphs}

\author{Luca Avena}
\address{DIMAI, Dipartimento di Matematica e Informatica “Ulisse Dini”, Universit\'a degli studi di Firenze, Viale Giovanni Battista Morgagni 67/a, 50134 Firenze, Italy.}
\email{luca.avena@unifi.it}

\author{Rangel Baldasso}
\address{Department of Mathematics, PUC-Rio, Rua Marqu\^es de S\~ao Vicente 225, G\'avea, 22451-900 Rio de Janeiro, RJ - Brazil.}
\email{rangel@mat.puc-rio.br}

\author{Rajat Subhra Hazra}
\address{Mathematical Institute, Leiden Univerisity, P.O.\ Box 9521, 2300RA Leiden, The Netherlands}
\email{r.s.hazra@math.leidenuniv.nl}

\author{Frank den Hollander}
\address{Mathematical Institute, Leiden Univerisity, P.O.\ Box 9521, 2300RA Leiden, The Netherlands}
\email{denholla@math.leidenuniv.nl}

\author{Matteo Quattropani}
\address{Dipartimento di Matematica ``Guido Castelnuovo'', Sapienza Universit\`a di Roma, Piazzale Aldo Moro 5, 00185 Roma, Italy}
\email{matteo.quattropani@uniroma1.it}

\begin{abstract}
	We consider the two-opinion voter model on a regular random graph with $n$ vertices and degree $d \geq 3$. It is known that consensus is reached on time scale $n$ and that on this time scale the volume of the set of vertices with one opinion evolves as a Fisher-Wright diffusion. We are interested in the evolution of the number of discordant edges (i.e., edges linking vertices with different opinions), which can be thought as the perimeter of the set of vertices with one opinion, and  is the key observable capturing how consensus is reached. We show that if initially the two opinions are drawn independently from a Bernoulli distribution with parameter $u \in (0,1)$, then on time scale $1$ the fraction of discordant edges decreases and stabilises to a value that depends on $d$ and $u$, and is related to the meeting time of two random walks on an infinite tree of degree $d$ starting from two neighbouring vertices. Moreover, we show that on time scale $n$ the fraction of discordant edges moves away from the constant plateau and converges to zero in an exponential fashion. Our proofs exploit the classical dual system of coalescing random walks and use ideas from \cite{CFRmultiple} built on the so-called First Visit Time Lemma. We further introduce a novel technique to derive concentration properties from weak-dependence of coalescing random walks on moderate time scales.
  
\bigskip\noindent  
\emph{Key words.} Regular random graph, voter model, random walks, discordant edges, concentration.

\medskip\noindent
\emph{MSC2020:}
05C81, 
60K35. 
\end{abstract}

\maketitle
	
	

\section{Model, literature and results}
\label{sec:intro}

Random processes on random graphs constitute a research area that poses many challenges. In the past decade, considerable progress has been made in understanding how the geometry of the graph affects the evolution of the process. In terms of the choice of graph, the focus has been on the Erd\"os-R\'enyi random graph, the configuration model, the preferential attachment model, and the exponential random graph, either directed or undirected, and in regimes ranging from sparse to dense. In terms of the choice of process, the focus has been on percolation, random walk, the stochastic Ising model, the contact process, and the voter model. What makes the area particularly interesting is that there is a delicate interplay between the \emph{size} of the graph and the \emph{time scale} on which the process is observed. For short times, the process behaves as if it lives on an infinite graph. For instance, many sparse graphs are locally tree-like and therefore the process behaves as if it evolves on an infinite Galton-Watson tree. For long times, however, the process sees that the graph is finite and exhibits a crossover in its behaviour. For instance, the voter  model, which will be the process of interest in the present paper, eventually reaches consensus on any finite connected graph, but the time at which it does depends on the size of the graph. Many such instances can be captured under the name of \emph{finite-systems scheme}, i.e., the challenge to identify how a finite truncation of a stochastic system behaves as both the time and the truncation level tend to infinity, properly tuned together (\cite{CG90}, \cite{CG94*}).  

Random processes on random graphs are part of the larger research area of random processes in \emph{random environment}, where the environment selects the random transition probabilities or transition rates. Another name is that of \emph{interacting particle systems} in random environment. For random process on lattices a more or less complete theory has been developed over the past four decades. The challenge is to extend this patrimony to random graphs. In the present paper we focus on the voter model on the regular random graph in the sparse regime. We track how the fraction of \emph{discordant edges} evolves over time in the limit as the size of the graph tends to infinity, and identify its scaling behaviour on \emph{three time scales}: short, moderate, and long. 

Voter models were introduced and studied in \cite{clifford:sudbury} and \cite{holley:liggett}. Voter models and their consensus times on finite graphs were analysed in \cite{donnelly1983finite} and \cite{C89}. The behaviour of voter models on random networks depends on the realisation of the network: \cite{sood:antal:redner} made various predictions for the expected consensus time on heterogeneous random networks (including power-law random graphs). More recently, \cite{fernley2019voter} study the asymptotics of the consensus time on inhomogeneous random graph models like the Chung-Lu model and the Norros-Reitu model. The expected consensus time for regular random graphs was studied in \cite{CFRmultiple}, in the discrete-time synchronous setting. As far as we know, there is no literature on the evolution of the number of discordant edges on random graphs. 


\subsection{Model and background}

Given a connected graph $G=(V,E)$, the voter model is the Markov process $(\eta_t)_{t \geq 0}$ on state space $\{0,1\}^V$, with $\eta_t = \{\eta_t(x)\colon\,x\in V\}$, where $\eta_t(x)$ represents the opinion at time $t$ of vertex $x$. Each vertex is equipped with an independent exponential clock that rings at rate one. After each ring of the clock, the vertex selects one of the neighbouring vertices uniformly at random and copies its opinion. (A formal description of this interacting particle system, its generator, and how its dynamics can be built up via the so-called graphical representation is postponed to Section \ref{sec:notation}.) 

As mentioned previously, while the voter dynamics has been widely studied on periodic lattices, periodic tori, and complete graphs, only recently its evolution on general connected graphs has been considered. In particular, as discussed below, if the underlying (random or non-random) graph has sufficiently nice properties, then it is possible to identify the time scale on which consensus takes place, and to determine how the process behaves on this time scale. In the present paper we specialise to the $d$-regular random graph ensemble with $d \ge 3$. In particular, we consider the sequence of random graphs $(G_{d,n}(\omega))_{n\in\N}$, with law denoted by $\P$, where each element $G_{d,n}(\omega) = (V,E(\omega))$ is a regular random graph of degree $d \ge 3$, consisting of $|V|=n$ vertices and $|E(\omega)|=m=dn/2$ edges, uniformly sampled from the set of simple $d$-regular graphs with $n$ vertices (to guarantee that $dn$ is even, for $d$ odd we restrict to $n$ even). These graphs are locally tree-like and have good expansion properties (see Section \ref{subsec:rrg_geometry}), lying within the realm of so-called \emph{mean-field geometries} (see Section \ref{sec:meanfeld}). Therefore the voter model on the $d$-regular random graph ensemble can be investigated in some depth, and refined statements clarifying how consensus is reached can be derived. In the present paper we analyse the evolution of the density of \emph{discordances}. In other words, denoting the set of discordant edges at time $t$ by
\begin{equation*}
	D^n_t = \big\{e=(x,y) \in E\colon\, \eta_{t}(x) \neq \eta_{t}(y)\big\},
\end{equation*}
we will study the fraction of discordant edges at time $t\ge 0$ given by
\begin{equation}\label{discord}
	\cD^n_t = \frac{|D^n_t|}{m}
\end{equation} 
in the limit as $n\to\infty$.

Note that on the complete graph the number of discordant edges is the product of the numbers of vertices carrying the two respective opinions. This is no longer the case on other geometries. Our results below for regular random graphs imply that \emph{homogenisation} occurs on time scale $n$, i.e.,  the product property emerges on the consensus time scale as the graph size grows, while different behaviour is observed on shorter time scales. 


\subsection{Voter model on mean-field geometries}
\label{sec:meanfeld}

$\mbox{}$

\medskip\noindent
$\bullet$ \emph{Voter model.}	
A classical model of relevance in population genetics is the voter model on the \emph{complete graph}, which is referred to as the \emph{Moran model} or the \emph{Fisher-Wright model} (depending on whether continuous-time asynchronous or discrete-time parallel updates are considered). The states $\{0,1\}$ represent two different alleles, each coding for a specific genetic trait, and the $n$ vertices of the complete graph $K_n$ represent individuals in a population of size $n$. In the limit as $n\to\infty$, the consensus time (to be interpreted as the extinction time of one of the two traits) is known to scale linearly in $n$. Furthermore, on time scale $n$ the fraction of the individuals of, say, type $1$ converges as a process to the so-called \emph{Fisher-Wright diffusion}. These results can be derived by analysing the backward genealogical progeny, which amounts to studying $n$ coalescing random walks evolving on the same graph, which in turn is related to the so-called \emph{Kingman coalescent} (we refer to \cite{Du08} for details). In Section \ref{sec:notation} we give a precise description of this \emph{duality} on arbitrary graphs.

These by now classical results have been recently extended to general \emph{mean-field geometries} under two main conditions that can be roughly described as follows: (I) the stationary distribution of the random walk on the random graph must be \emph{not too concentrated}; (II) \emph{fast-mixing} in the sense of an asymptotically vanishing ratio between the mixing time of a single random walk and the expected meeting time of two independent random walks starting from stationarity. 

\medskip\noindent
$\bullet$ \emph{Dual process.}
For what concerns the dual process of coalescing random walks, \cite{Oaop} computes the distributional limit of the coalescence time under the above mentioned assumptions. In particular, it follows from \cite[Theorem 1.2]{Oaop} that 
\begin{equation}
	\label{eq:Oliv}
	\lim_{n\to\infty}\frac{\Expect[\tau_{\rm coal}]}{\Expect[\tau^{\pi\otimes\pi}_{\rm meet}]}= 2,
\end{equation}
where $\tau_{\rm coal}$ is the coalescence time of $n$ random walks, each starting from a different vertex, and $\tau_{\rm meet}^{\pi\otimes\pi}$ is the meeting time of two random walks independently starting from stationarity. This offers insight into how the dual voter model behaves in mean-field like geometries. In particular, \eqref{eq:Oliv} allows us to translate the question of how long it takes for the voter model to achieve consensus to the question of controlling the meeting time of two random walks starting from stationarity. Indeed, as a consequence of this duality, it is immediate that the $n$-coalescence time $\tau_{\rm coal}$ stochastically dominates the above mentioned \emph{consensus time}, defined as
\begin{equation}
	\label{eq:def-consensus}
	\tau_{\rm cons}=\inf\{t\ge 0\colon\, \eta_t(x)=\eta_t(y), \text{ for all } x,y\in V \}.
\end{equation}
For what concerns the convergence to a Fisher-Wright diffusion after proper scaling, the recent work by \cite{CCC} considers the \emph{fraction of vertices in state $1$ at time $t$} in the voter model,
\begin{equation}
	\label{eq:def-frac-of-blue}
	\cB^{n}_t=\frac{1}{n}\sum_{x\in V}\eta_t(x), \qquad t \ge 0,
\end{equation}
under the above mentioned \emph{mean-field conditions} (see \cite[Theorem 2.2]{CCC} for a precise formulation). The main result states that, when time is sped up by a factor $\gamma_n \coloneqq \Expect[\tau_{\rm meet}^{\pi\otimes\pi}]$, the rescaled density of vertices in state $1$, i.e., $(\cB_{\gamma_n t}^{n})_{t \ge 0}$, converges as $n\to\infty$ to a Fisher-Wright diffusion in the Skorokhod topology.

\medskip\noindent
$\bullet$ \emph{Regular random graphs as mean-field geometries.}
Let us next discuss the implications of the results mentioned above within the specific framework of $d$-regular random graphs. The latter is a geometric setting that satisfies the aforementioned \emph{mean-field conditions} with high probability with respect to the law $\P$ of the environment. Indeed, it is well known that a typical realisation of the graph is connected with high probability as soon as $d\ge 3$. Due to the undirectedness of the edges, the corresponding stationary distribution is uniform over the vertex set. Furthermore, with high probability under $\P$, the mixing time is of order $\log n$ (see \cite{LS}), while the expected meeting time (see  \cite{chen-meeting}) satisfies
\begin{equation*}
	\frac{\Expect[\tau_{\rm meet}^{\pi\otimes\pi}]}{n} \overset{\mathbb{P}}{\longrightarrow} \frac{1}{2\theta_d}
\end{equation*}
with
\begin{equation}
	\label{eq:theta_d}
	\theta_d=\frac{d-2}{d-1}.
\end{equation}
Hence, by applying \eqref{eq:Oliv} to this setting, we find that
\begin{equation*}
	\frac{\Expect[\tau_{\rm coal}]}{n}\overset{\mathbb{P}}{\longrightarrow} \frac{1}{\theta_d}.
\end{equation*}
Moreover, for the voter model starting from independent Bernoulli opinions with parameter $u\in(0,1)$, by \cite{CCC} we also have that the fraction of opinions of type $1$, i.e., $\cB^n_t$ converges, after time is sped up by a factor $n$, to the Fisher-Wright diffusion $(\bar\cB_s)_{s \geq 0}$ given by the SDE
\begin{equation}
	\label{eq:dB}
	\dd \bar\cB_s = \sqrt{2\theta_d\bar\cB_s  (1-\bar\cB_s )}\,\dd W_s, \qquad \bar\cB_0  = u,
\end{equation}
where $(W_s)_{s \geq 0}$ denotes the standard Brownian motion (with generator $\tfrac12\Delta$). 

In conclusion, on $d$-regular ensembles consensus is reached on average on time scale $n$ and, as far as the law is concerned, on the time scale of the consensus the process can be well approximated by the Fisher-Wright diffusion in \eqref{eq:dB}. Interestingly, as pointed out by \cite{CCC}, the diffusion coefficient in \eqref{eq:dB} (which is referred to as the \emph{genetic variability} in a population genetics context) can be related to the fraction of discordant edges. This will be the starting point of our investigation, which is devoted to a deeper understanding of the evolution of the voter model beyond the consensus time scale $n$, via a detailed analysis of the discordant edges.
It is worth to notice that the number of discordant edges can be interpreted as the size of the interface between the two opinions, or, in other words, as the perimeter of the set of vertices having one of the two opinions. For this reason, tracking the fraction of discordant edges is interesting because it captures how consensus is reached via a gradual merging of the connected components with a single opinion. In a sense, the latter problem could be framed in the wide context of dynamic percolation: removing the discordant edges from the graph induces a certain number of connected components, each made by vertices having the same opinion. Nevertheless, in this work we will focus only on the analysis of the number of discordant edges and we leave the analysis of its consequences on the topology of the set of vertices with a given opinion for possible future work.


\subsection{Main theorems: evolution of discordances and stabilisation before consensus}

Before we present our main results, we introduce some notation. The symbols $\P$ and $\E$ will be reserved for the probability space of the $d$-regular random graph. The abbreviation $\whp$ refers to events that occur with  probability $\P$ tending to $1$ as $n\to\infty$.

Thanks to the duality between the $n$-vertex voter model and a system of $n$-coalescing random walks (to be described in more detail in Section \ref{sec:notation}), it is sufficient to look at a collection of $2m$ Poisson processes, each associated with an (oriented) edge of the graph, together with the initial assignment of the opinions. In this way, we can use the very same source of randomness to sample the evolution of the voter model, the dual system of coalescing random walks, and a system of $n$ independent random walks, each starting at a different vertex of the graph. For this reason we adopt the symbols $\Prob$ and $\Expect$ to refer to any of these three stochastic processes evolving on a quenched realisation of the graph. To distinguish between the random walks and the voter model, in the latter case we use a sub-index to refer to the initial distribution of the process. For instance, $\Prob_\xi$ denotes the law of the voter model starting from the configuration $\eta_0=\xi\in\{0,1\}^V$. Similarly, when we write $\Prob_u$, we refer to the voter model initialised by a product of Bernoulli random variables of mean $u\in(0,1)$, or to the solution of \eqref{eq:dB}. On the other hand, when considering a system of two or more independent or coalescing random walks, we simply use the symbols $\Prob$ and $\Expect$, and the starting positions of the random walks will be clear from the context (with the exception of Section \ref{sec:concentration}, where an \emph{ad-hoc} notation will be introduced). Finally, we write ${\rm Ber}(u)$ for a Bernoulli distribution of parameter $u$.  

We now present our main results. The first theorem identifies $\whp$ the first order asymptotics of the expected number of discordant edges on all time scales. 

\begin{theorem}[Expected density of discordances at all time scales]
	\label{theorem:expectation}
	Fix $d\geq 3$, and let $\theta_d$ be as in \eqref{eq:theta_d}. Fix $u\in(0,1)$ and, for $n\ge d+1$, consider the voter model on a $d$-regular random graph $G_{d,n}(\omega)$ with initial distribution $[{\rm Ber}(u)]^{\otimes V}$. Then, for any non-negative sequence of times $(t_n)_{n\in\N}$, such that the limit of $t_n$ and $t_n/n$ exist in $[0,\infty]$, the density of discordant edges in~\eqref{discord} satisfies
	\begin{equation}
		\label{AsyMean} 
		\left| {\bf E}_u\left[\cD^n_{t_n}\right]- 2u(1-u) f_d(t_n)\mathrm{e}^{-2 \theta_d \frac{t_n}{n} }\right| 
		\overset{\mathbb{P}}{\longrightarrow}0, 
	\end{equation}
	where  
	\begin{equation}
		\label{InfTreeMeet}
		f_d(t)={\bf P}^{\cT_d}(\tau^{x,y}_{\rm meet}> t)=\sum_{\kappa =0}^{\infty}
		\ee^{-2t}\frac{(2t)^\kappa}{\kappa!}\sum_{s>\lfloor \frac{\kappa-1}{2} \rfloor}\binom{2s}{s}\frac{1}{s+1} 
		\Big( \frac{1}{d} \Big)^{s+1}\Big( \frac{d-1}{d} \Big)^{s},
	\end{equation}
	with ${\bf P}^{\cT_d}$ denoting the law of two independent random walks on the infinite $d$-regular tree $\mathcal{T}_d$ starting from the endpoints of an edge $e=(x,y)$ in $\mathcal{T}_d$. 
\end{theorem}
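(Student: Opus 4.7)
The plan is to use the classical duality between the voter model and a system of coalescing random walks (recalled in Section~\ref{sec:notation}) to reduce the theorem to a statement about meeting times of two independent random walks on the quenched graph. Conditioning the opinions of the two endpoints of an edge $\{x,y\}$ on the backward coalescing walks started at $x$ and $y$, and using that these walks either coalesce by time $t$ (contributing $0$) or stay separated and read two independent $[{\rm Ber}(u)]$ samples, one gets
\begin{equation*}
    \Prob_u\bigl(\eta_t(x) \neq \eta_t(y)\bigr) = 2u(1-u)\,\Prob\bigl(\tau^{x,y}_{\text{meet}} > t\bigr).
\end{equation*}
Summing over edges and dividing by $m$, the theorem is equivalent to showing that, in $\P$-probability,
\begin{equation*}
    \frac{1}{m}\sum_{\{x,y\}\in E} \Prob\bigl(\tau^{x,y}_{\text{meet}} > t_n\bigr) - f_d(t_n)\,\expo^{-2\theta_d t_n/n} \longrightarrow 0.
\end{equation*}

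The core idea is then to decompose the meeting time of two walks started at adjacent vertices into a \emph{local phase}, governed by the infinite tree $\cT_d$, and a \emph{global phase}, governed by an effectively mean-field exponential clock. Since $G_{d,n}(\omega)$ is locally tree-like up to radius of order $\log n$ with high probability under $\P$, the walks issued from the two endpoints of a typical edge can be coupled with two independent walks on $\cT_d$ up to some time $t_n^\star$ with $1 \ll t_n^\star \ll \log n$. For a typical edge this yields $\Prob(\tau^{x,y}_{\text{meet}}>t) = f_d(t) + o(1)$ uniformly for $t \le t_n^\star$, which already settles the regime $t_n = O(1)$ since the exponential correction is $1+o(1)$. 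The explicit identity \eqref{InfTreeMeet} for $f_d(t)$ comes from conditioning on the Poisson number $\kappa$ of joint jumps of the two walks on $\cT_d$ and enumerating the first-passage paths via the Catalan generating function. For times $t_n \gg t_n^\star$, I would apply the \emph{First Visit Time Lemma} in the spirit of \cite{CFRmultiple}: conditionally on the two walks not meeting during the local phase (which happens with asymptotic probability $f_d(\infty)$), the residual meeting time is approximately exponential with rate $2\theta_d/n$, in view of the asymptotics of $\Expect[\tau^{\pi\otimes\pi}_{\text{meet}}]$ recalled in the introduction. Combining the two phases produces precisely the product form $f_d(t_n)\expo^{-2\theta_d t_n/n}$.

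To promote this quenched pointwise estimate to the edge-average in $\P$-probability, I would isolate the atypical edges whose $(\log n)$-neighbourhood contains a short cycle: classical local-weak-convergence estimates for $G_{d,n}$ imply that the number of such edges is $o(m)$ with high probability under $\P$, and their contribution to the edge-average is bounded by $o(1)$ since each summand lies in $[0,1]$. For the typical edges the asymptotics above hold with errors uniform in the edge, so edge-averaging transfers the pointwise estimate to the required average.

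The main obstacle I anticipate is the clean interpolation between the tree regime $t_n \lesssim \log n$ and the FVTL regime $t_n \gtrsim$ mixing time, i.e., showing that the prefactor in the exponential tail matches $f_d(\infty)$ and, more generally, that $f_d(t_n)$ is the right bridging factor for every diverging sequence $t_n$ with $t_n/n$ converging in $[0,\infty]$. The First Visit Time Lemma is tailored to walks started from the stationary distribution, whereas here they start at distance one; the exponential hitting regime must therefore be reached through a short burn-in during which the surviving pair of walks becomes approximately quasi-stationary, while carefully accounting for the probability mass already lost to local coalescence. Making the resulting errors small uniformly in $t_n$ — and independently of where in the three time scales $t_n$ sits — is the technically delicate step, and is where the ideas of \cite{CFRmultiple} combined with the local-tree coupling must be pushed quantitatively.
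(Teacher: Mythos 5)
Your proposal is correct and follows essentially the same route as the paper: exact duality reducing to $\Prob(\tau^{x,y}_{\rm meet}>t_n)$, the tree coupling with the Catalan/biased-walk computation of $f_d$ for typical (locally tree-like) edges at short times, the First Visit Time Lemma of \cite{CFRmultiple} for the exponential tail at long times, and edge-averaging after discarding the $o(m)$ atypical edges. The ``delicate bridging'' you flag is handled in the paper exactly as you anticipate: a burn-in in which the adjacent walks either coalesce locally (with limiting probability $1-\theta_d$) or reach distance $\sigma_n$, after which a far-distance non-meeting estimate plus mixing lets one apply the FVTL to the product chain with the diagonal collapsed to a single state, yielding the rate $2\theta_d/n$ uniformly in $t$.
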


Note that the limit is different depending on the time scale we are looking at:
\begin{itemize} 
	\item  
	({\bf Short time scale}) For $t_n=\Theta(1)$, the first order of the above expectation is given by $2u(1-u)f_d(t_n)$, and the behaviour is governed by the non-decreasing function captured in \eqref{InfTreeMeet}, representing the meeting time of two random walks starting from adjacent vertices on an infinite $d$-regular tree. See Figure \ref{fig:simulations}.
	\item  
	({\bf Moderate time scale}) For $t_n=\omega(1)\cap o(n)$, the discordances stabilise at a limiting plateau $2u(1-u) f_d(\infty)$ characterised by the probability that two adjacent random walks on an infinite tree never meet, i.e., $f_d(\infty)=\theta_d$.
	\item ({\bf Long time scale}) For $t_n=sn$, $s \in (0,\infty)$, the dual system coalesces and the voter model reaches consensus. The evolution of the density of opinions is approximated by the Fisher-Wright diffusion in \eqref{eq:dB}, and the expected density of discordances is characterised by the mean of the genetic variability ${\bf E}_u\left[\bar\cB_s (1-\bar\cB_s )\right]=2u(1-u)\theta_d\mathrm{e}^{-2 \theta_d s}$. See Figure \ref{fig:simulations2}.
	\item ({\bf Consensus}) For $t_{n}=\omega(n)$, the system has reached consensus and the limiting formula in \eqref{AsyMean} degenerates to zero.
\end{itemize}

\medskip
\begin{figure}[htbp]
	\centering
	\includegraphics[width=6.5cm]{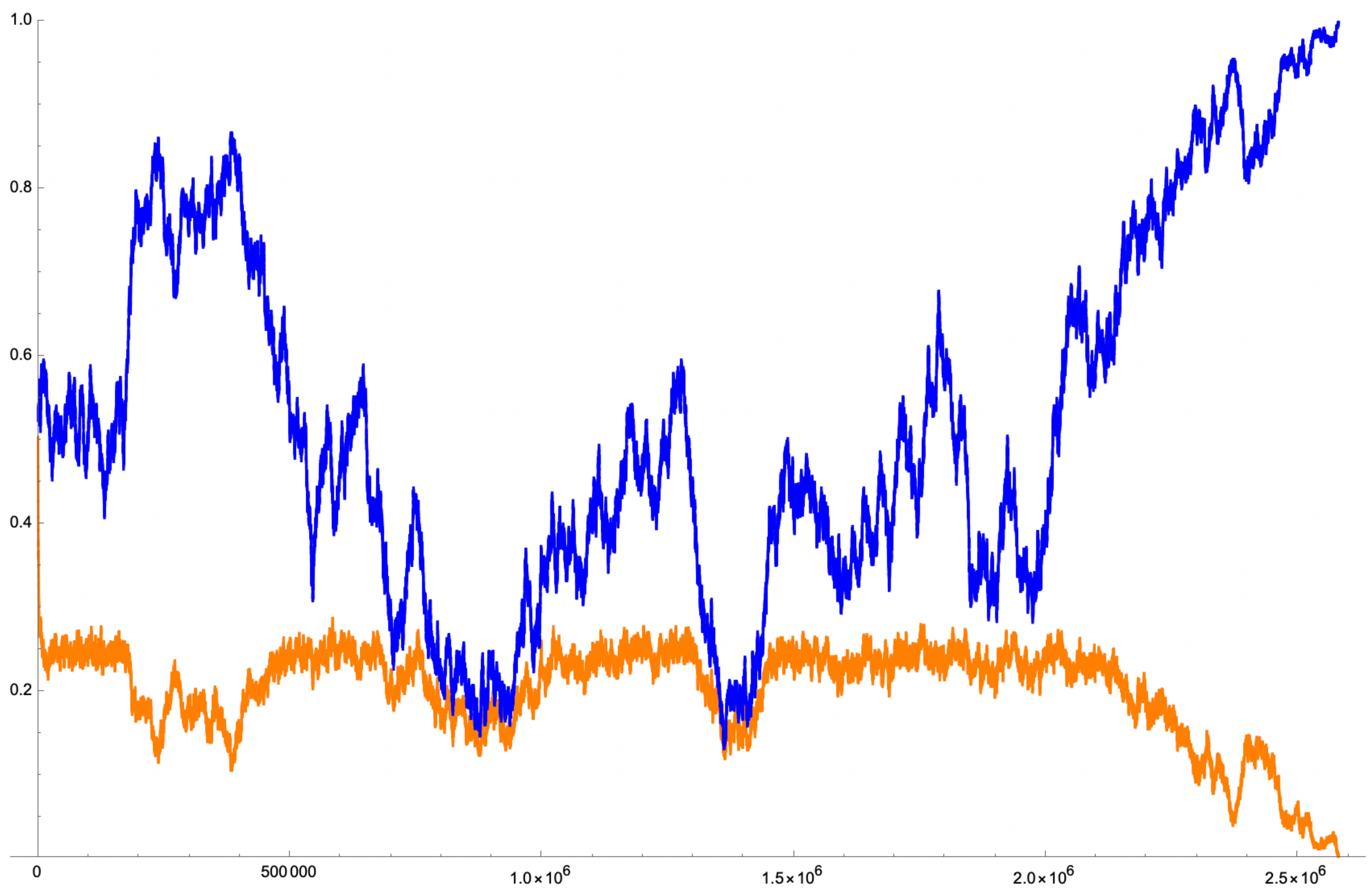}
	\hspace{1cm}
	\includegraphics[width=6.5cm]{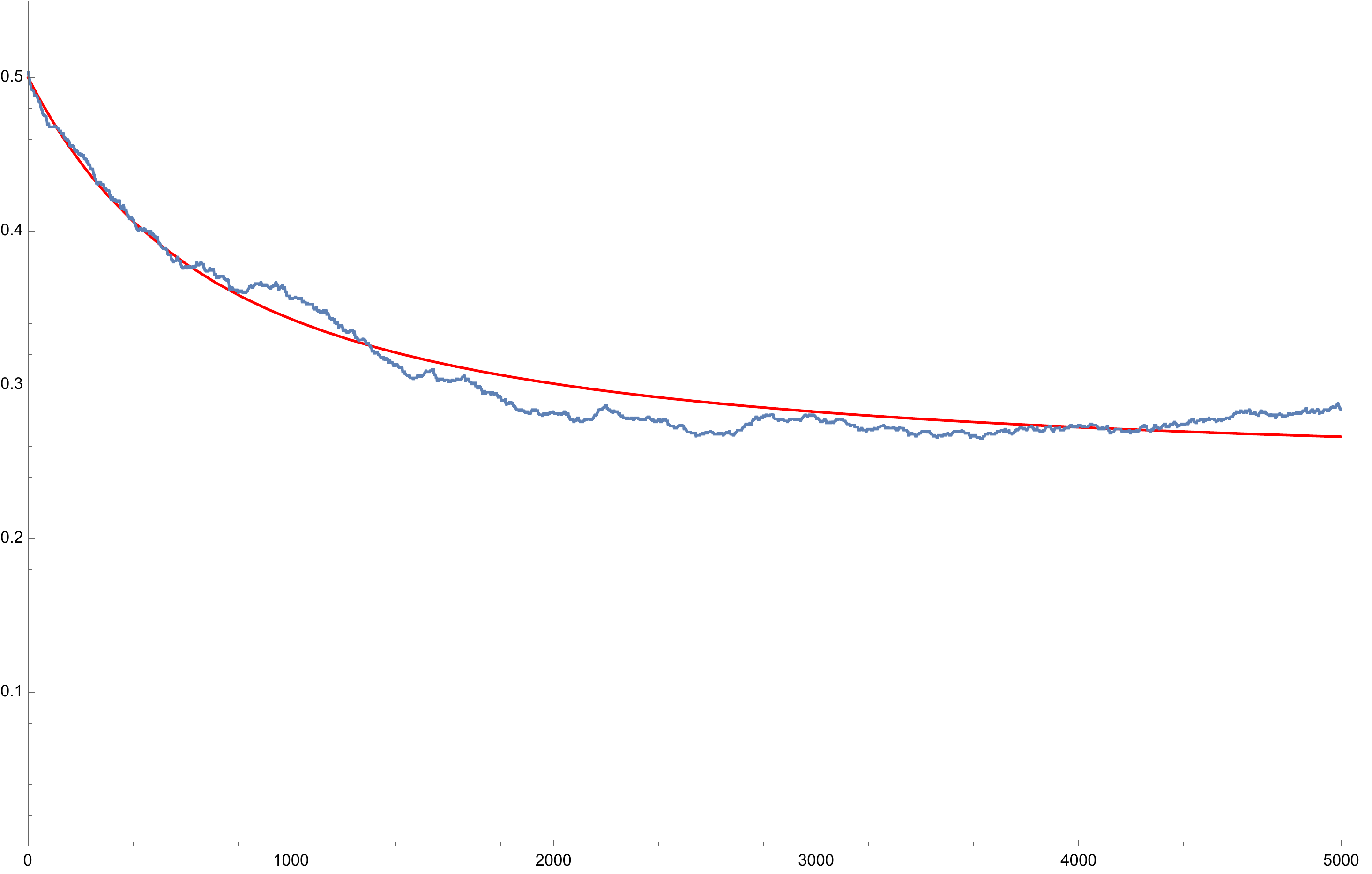}
	\vspace{0.3cm}
	\caption{The two plots show a single simulation of the voter model on a regular random graph of size $n=1000$, degree $d=3$ and initial density $u=0.5$. \emph{Left}: In blue the density of $1$-opinions up to consensus ($\tau_{\rm cons} \approx 2.6 \times 10^3$), in orange the density of discordant edges up to consensus. \emph{Right}: In blue the density of discordant edges up to time $t=5$ (corresponding to a zoom-in of the plot on the left), in red the function $t \mapsto 2u(1-u) f_d(t)$.}
	\label{fig:simulations}
\end{figure}

\begin{figure}[htbp]
	\centering
	\includegraphics[width=8.5cm]{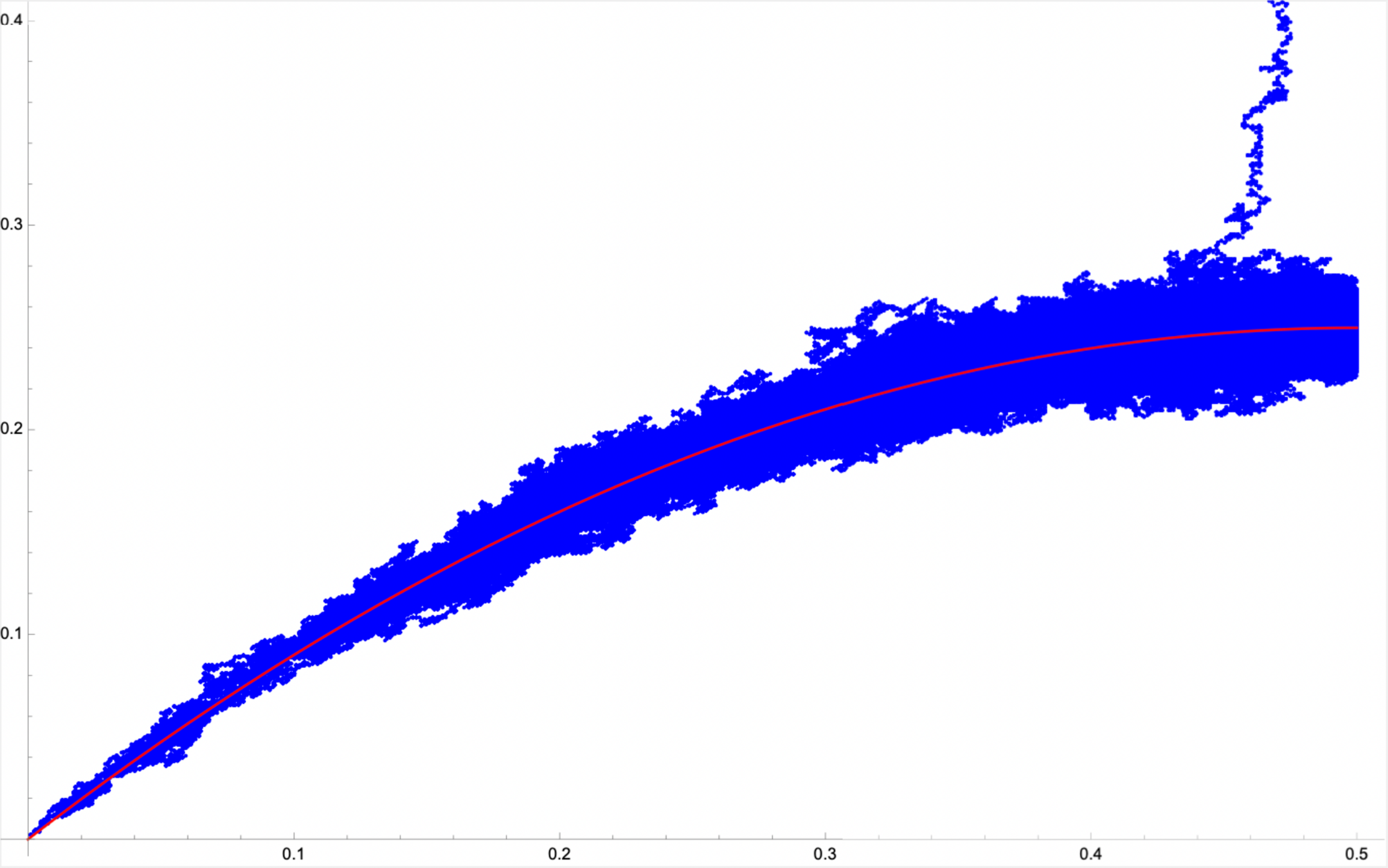}
	\vspace{0.3cm}
	\caption{Scatter plot in blue for the same simulation as in Figure \ref{fig:simulations}: the density of discordant edges versus the density of the minority opinion. The piece sticking out corresponds to short times. The curve in red is $x \mapsto x(1-x)$.}
	\label{fig:simulations2}
\end{figure}

\medskip
The second theorem characterises the process $\cD^n$ in~\eqref{discord} beyond its expectation. In particular, we show that on time scale $o(n)$ the density of discordances concentrates around the expectation, while on time scale $\Theta(n)$ it behaves as a deterministic function of the Fisher-Wright diffusion.

\begin{theorem}[Beyond expectation]
	\label{thm:beyondthemean}
	Consider the voter model on a $d$-regular random graph $G_{d,n}(\omega)$ with $d\geq 3$.
	\begin{enumerate}
		\item \textbf{Concentration before coalescence.} Let $t_n$ be such that $t_n/n \to 0$. Then, for every $\varepsilon>0$, 
		\begin{equation*}
			\sup_{\xi\in\{0,1 \}^V} \Prob_\xi\left(\left|\cD^n_{t_n}-\Expect_\xi[\cD_{t_n}^n] \right|>\varepsilon \right)\overset{\P}{\longrightarrow}0.
		\end{equation*}
		\item \textbf{Discordance on consensus time scale.} Let $t_n$ be such that $t_n/n \to s \in (0,\infty)$. Then, for every $u\in(0,1)$,
		\begin{equation*}
			\sup_{x\in[0,1]} \left| \Prob_u\left(\cD^n_{t_n}\le x \right) - \Prob_u\left(2\theta_d\bar\cB_{2s\theta_d} (1-\bar\cB_{2s\theta_d} )\le x \right) \right| 
			\overset{\P}{\longrightarrow} 0,
		\end{equation*}
		where $(\bar\cB_s )_{s\ge0}$ is the solution of \eqref{eq:dB}.
	\end{enumerate}
\end{theorem}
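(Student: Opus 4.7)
The plan is to prove the two parts sequentially, with Part~(1) feeding into Part~(2). Both rely on the duality between the voter model and coalescing random walks that was already used for Theorem~\ref{theorem:expectation}, together with the tree-like geometry of the $d$-regular random graph and the First Visit Time Lemma style bounds recalled in Section~\ref{subsec:rrg_geometry}.

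For Part~(1), the strategy is a second moment argument: show $\sup_{\xi}\mathrm{Var}_\xi(\cD^n_{t_n})\overset{\P}{\longrightarrow} 0$ and then invoke Chebyshev's inequality, uniformly over $\xi$. Expanding the variance as $m^{-2}\sum_{e,e'}\mathrm{Cov}_\xi(\mathbf{1}_{e\ \text{disc}},\mathbf{1}_{e'\ \text{disc}})$, the graphical construction identifies each covariance as the difference between (i) $\Prob(\xi(X^x_{t_n})\ne\xi(X^y_{t_n}),\,\xi(X^{x'}_{t_n})\ne\xi(X^{y'}_{t_n}))$ under the four-particle coalescing random walk system and (ii) the same probability when the two pairs are run as independent coalescing systems. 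Coupling so that within-pair coalescences agree, this difference is bounded by the probability of a cross-pair coalescence before $t_n$:
\begin{equation*}
|\mathrm{Cov}_\xi(\mathbf{1}_{e},\mathbf{1}_{e'})|\le 4\,\Prob(\text{some walk of $\{X^x,X^y\}$ meets some walk of $\{X^{x'},X^{y'}\}$ by } t_n),
\end{equation*}
uniformly in $\xi$. I then split the sum by graph distance: for pairs with $d_G(e,e')\le K\log n$ the covariance is at most $1$ and the number of such pairs is $\whp$ only $O(m\, n^{K\log d})$, contributing $o(1)$ when $K<1/\log d$; for pairs with $d_G(e,e')>K\log n$ the starting vertices of the four walks are pairwise far apart, and First Visit Time Lemma estimates in the vein of \cite{CFRmultiple} bound the cross-pair meeting probability by $O(t_n/n)=o(1)$ uniformly over such configurations.

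For Part~(2), I would use a two-step Markov decomposition. Pick an auxiliary scale $r_n$ with $r_n\to\infty$ and $r_n/n\to 0$ (e.g., $r_n=\log^3 n$) and apply the Markov property at time $t_n-r_n$. Conditionally on $\eta_{t_n-r_n}$, Part~(1) yields $\cD^n_{t_n}=\Expect_{\eta_{t_n-r_n}}[\cD^n_{r_n}]+o_{\Prob}(1)$. The duality argument behind Theorem~\ref{theorem:expectation} extends to a deterministic initial condition $\xi$ with density $u_\xi=n^{-1}\sum_v\xi(v)$: for $r_n=\omega(1)\cap o(n)$, two coalescing walks from the endpoints of a typical edge either coalesce inside the tree-like neighbourhood (probability $1-\theta_d+o(1)$) or mix to approximately independent uniform positions, giving
\begin{equation*}
\Expect_\xi[\cD^n_{r_n}] = 2u_\xi(1-u_\xi)\,f_d(r_n)\,\ee^{-2\theta_d r_n/n}+o(1) = 2\theta_d\, u_\xi(1-u_\xi)+o(1)
\end{equation*}
$\whp$, uniformly in $\xi$ (the $\xi$-dependence entering only through $u_\xi$). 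Taking $\xi=\eta_{t_n-r_n}$ and using that $\cB^n$ is a bounded martingale whose quadratic variation grows at rate $O(\cD^n_t/n)$, so that $\cB^n_{t_n}-\cB^n_{t_n-r_n}\to 0$ in $L^2$, yields $\cD^n_{t_n}=2\theta_d\,\cB^n_{t_n}(1-\cB^n_{t_n})+o_{\Prob}(1)$. Combining with $\cB^n_{sn}\Rightarrow\bar\cB_{2s\theta_d}$ from \cite{CCC} (since the time scale $n$ corresponds to $t=2s\theta_d$ in the rescaling $\gamma_n\sim n/(2\theta_d)$) and the continuous mapping theorem gives convergence in law of $\cD^n_{t_n}$ to $2\theta_d\,\bar\cB_{2s\theta_d}(1-\bar\cB_{2s\theta_d})$. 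Uniform convergence of the CDFs is then standard: the limit law is absolutely continuous on $(0,\theta_d/2]$ and has a single atom at $0$ (corresponding to consensus by time $t_n$ on the prelimit side and to absorption of $\bar\cB$ in $\{0,1\}$ on the limit side), whose mass converges by Portmanteau applied to the closed set $\{0\}$ and its open complement.

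The hardest step will be the uniform variance bound in Part~(1). Pairwise coalescence estimates on $d$-regular random graphs are well understood via the First Visit Time Lemma, but here one must control the full four-particle coalescing system simultaneously, uniformly across the $\Theta(n^2)$ edge pairs and across arbitrary initial data $\xi$, while cleanly separating the within-pair coalescences (which drive the expected discordance density) from the rare cross-pair coalescences (which produce correlations). This is presumably the ``novel technique to derive concentration properties from weak-dependence of coalescing random walks on moderate time scales'' advertised in the abstract, and is where most of the technical effort should be concentrated.
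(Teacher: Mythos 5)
Your proposal follows essentially the same route as the paper, with one nice local variation. For Part (1) the paper also runs a second-moment/Chebyshev argument: it splits edge pairs at a distance threshold $\sigma_n=(\log\log n)^2$ (your $K\log n$ with $K<1/\log d$ works just as well), discards the close pairs by a deterministic counting bound, and for distant pairs shows that on the event of no cross-pair meeting before $t_n$ the two discordancy indicators are negatively correlated (a conditioning-on-trajectories version of your coupling bound), while the cross-meeting probability is $o(1)$ uniformly over such pairs by Lemma~\ref{lemma:exponential_meeting_far_particles}, since $t_n=o(n)$; this is Proposition~\ref{prop:pointwise-concentration}. For Part (2) the paper likewise applies the Markov property at time $t_n-\delta_n$ with $\delta_n=2\log^2 n$, uses the worst-case concentration of Part (1) for the conditional law, and identifies $\Expect_\xi[\cD_{\delta_n}]\approx 2\theta_d u_\xi(1-u_\xi)$ uniformly in $\xi$. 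Be aware that this last identification is the one step in your sketch that needs a genuine quantitative input: your ``mix to approximately independent uniform positions'' is exactly what the paper makes rigorous via Lemma~\ref{lemma:CCC} (imported from \cite{CCC}), combined with the meeting-time asymptotics for a typical edge, so you cannot simply assert it as an extension of Theorem~\ref{theorem:expectation}, whose proof is for product Bernoulli initial data. Your martingale quadratic-variation bound for $\cB^n_{t_n}-\cB^n_{t_n-r_n}$ (bracket growing at rate $\cD^n_t/n$, hence an $o_{L^2}(1)$ increment over a polylogarithmic window) is a genuinely different, and arguably cleaner, replacement for the paper's route through Skorokhod continuity (Lemma~\ref{lemma:continuity} and Proposition~\ref{prop:convergence} applied to the increment functional), and it is correct.

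One caveat on your final step: Portmanteau applied to $\{0\}$ and its complement does not give convergence of the atom mass at $0$, because $\{0\}$ is not a continuity set of the limit law; closed and open sets yield only the one-sided bound $\limsup_n\Prob_u(\cD^n_{t_n}=0)\le\Prob_u\big(\bar\cB_{2s\theta_d}\in\{0,1\}\big)$. The matching lower bound, i.e.\ that the voter model has fixed by time $t_n$ with probability at least the Fisher-Wright absorption probability up to $o(1)$, requires a separate argument (for instance via the absorbing property of $\cB^n$ together with a quantitative statement that consensus is reached quickly once the minority is $o(n)$, or via convergence of the rescaled consensus time). The paper itself passes from convergence in distribution to the Kolmogorov-distance statement without comment, so this is a shared loose end rather than a defect specific to your plan, but your proposed justification of it is not valid as written.
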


The third and last theorem is a strengthening of Theorem \ref{thm:beyondthemean}.1. More precisely, we show that as soon as we focus on the process for times that are polynomially smaller than the consensus time, the concentration around the mean is actually uniform in time.  

\begin{theorem}[Uniform concentration on moderate time scale]
	\label{th:unif-concentration}
	Consider the voter model on a $d$-regular random graph $G_{d,n}(\omega)$. Then, for every $u\in(0,1)$ and $\delta,\varepsilon>0$,
	\begin{equation*}
		\Prob_u \Big( \sup_{0 \le t\le n^{1-\delta}}\big|\cD_t^n- \Expect_u[\cD_t^n]\big|>\varepsilon \Big) \overset{\P}{\longrightarrow} 0.
	\end{equation*}
\end{theorem}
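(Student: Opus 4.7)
The plan is to upgrade the pointwise-in-time concentration of Theorem \ref{thm:beyondthemean}.1 to a uniform-in-time statement via a chaining/discretisation scheme. I would fix a small parameter $\alpha\in(0,\delta)$, introduce the mesh
\begin{equation*}
t_k = k \Delta_n, \qquad \Delta_n = n^{-\alpha}, \qquad k = 0,1,\ldots, K_n, \qquad K_n = \lceil n^{1-\delta+\alpha}\rceil,
\end{equation*}
and decompose
\begin{equation*}
\sup_{0\le t\le n^{1-\delta}}\!\bigl|\cD^n_t-\Expect_u[\cD^n_t]\bigr| \le \max_{0\le k\le K_n}\!\bigl|\cD^n_{t_k}-\Expect_u[\cD^n_{t_k}]\bigr|
+ \max_{0\le k<K_n}\!\sup_{t\in[t_k,t_{k+1}]}\bigl(|\cD^n_t-\cD^n_{t_k}|+|\Expect_u[\cD^n_t-\cD^n_{t_k}]|\bigr).
\end{equation*}

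The first step is to control the oscillation inside a single mesh cell. Here I would use the graphical construction: the process $(\cD^n_t)$ can change only at Poisson rings of the $n$ vertex clocks, and each ring flips at most $d$ edge-indicators, so $|\cD^n_t-\cD^n_{t_k}|\le dN_k/m = 2N_k/n$, where $N_k$ is the number of rings in $[t_k,t_{k+1}]$, a Poisson random variable with mean $n\Delta_n=n^{1-\alpha}$. A standard Chernoff bound gives $\Prob(N_k\ge 2n^{1-\alpha})\le \expo^{-cn^{1-\alpha}}$, and a union bound over the $K_n=O(n^{1-\delta+\alpha})$ cells still leaves a super-polynomially small probability. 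Hence $\sup_{t\in[t_k,t_{k+1}]}|\cD^n_t-\cD^n_{t_k}|\le 4n^{-\alpha}\to 0$ uniformly in $k$ with probability tending to $1$ (both in the process and in the graph), and the same bound for the expected oscillation follows by dominated convergence.

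The second, and main, step is a \emph{quantitative} version of Theorem \ref{thm:beyondthemean}.1 with polynomial rate: I need a $\beta>0$ and a constant $C>0$ such that, whp in the graph,
\begin{equation}\label{eq:quant-conc}
\sup_{0\le t\le n^{1-\delta}} \Prob_u\bigl(|\cD^n_t-\Expect_u[\cD^n_t]|>\varepsilon\bigr)\le \frac{C}{\varepsilon^2\, n^\beta}
\end{equation}
with $\beta>1-\delta+\alpha$. If \eqref{eq:quant-conc} holds then a union bound over the $K_n$ mesh points gives $\max_k|\cD^n_{t_k}-\Expect_u[\cD^n_{t_k}]|\le \varepsilon$ with probability at least $1-CK_n/(\varepsilon^2 n^\beta)=1-o(1)$, and combining with Step~1 completes the proof upon choosing $\alpha$ so small that $\beta>1-\delta+\alpha$. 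To establish \eqref{eq:quant-conc} I would work with Chebyshev's inequality and bound
\begin{equation*}
\Expect_u\bigl[(\cD^n_t-\Expect_u[\cD^n_t])^2\bigr] = \frac{1}{m^2}\sum_{e,e'\in E}\mathrm{Cov}_u\bigl(\mathds{1}\{e\in D^n_t\},\mathds{1}\{e'\in D^n_t\}\bigr).
\end{equation*}
Via the classical voter/coalescing duality described in Section \ref{sec:notation}, each such covariance is an explicit function of the positions, at time $t$, of four coalescing random walks started from the endpoints of $e$ and $e'$; the Bernoulli$(u)$ initial law makes the covariance equal to $4u^2(1-u)^2$ times the coalescence probabilities among these four walks, minus their product counterparts.

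The main obstacle is therefore to show that most pairs of edges contribute a negligible covariance, i.e.\ that four coalescing random walks started from \emph{typical} edge pairs $(e,e')$ are \emph{weakly dependent} up to times $t\le n^{1-\delta}$. This is exactly the place where I would invoke the novel concentration technique advertised in the abstract, itself built on the First Visit Time Lemma of \cite{CFRmultiple}: on the locally tree-like $d$-regular random graph, whp a uniformly chosen pair of edges is at graph distance $\gg\log\log n$, and on the time scale $n^{1-\delta}$ the two associated random walk pairs reach stationarity and meet only with probability $O(t/n)=O(n^{-\delta})$, so that the corresponding two pairs of coalescing walks behave approximately as independent copies. Making this precise and showing that the total covariance is $O(n^{-\beta})$ for some $\beta>0$, uniformly in $t\le n^{1-\delta}$, is the crux; the few exceptional close pairs contribute $O(1/n)$ automatically, which is already sufficient. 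Once \eqref{eq:quant-conc} is available with any positive $\beta$, the discretisation argument above, with $\alpha\in(0,\min\{\delta,\beta\}/2)$, concludes the proof.
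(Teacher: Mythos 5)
Your discretisation step is fine and is essentially the same reduction the paper makes (``up to a union bound on the set of jump times''): the within-cell oscillation is controlled by the Poisson rings exactly as you describe. The genuine gap is in your main step. You need a pointwise bound whose rate beats the number of mesh points, i.e.\ $\beta>1-\delta+\alpha$; but the route you propose --- Chebyshev plus a covariance bound obtained from weak dependence of the four dual walks --- cannot deliver this. For a typical distant pair $(e,e')$ the covariance is controlled precisely by the probability that some two of the four walks meet before time $t$, which on the scale $t\le n^{1-\delta}$ is of order $t/n=n^{-\delta}$ (this is the content of the bound \eqref{eq:ccc2} the paper imports from \cite{AB1}); summing over pairs, the variance of $\cD^n_t$ is of order $n^{-\delta}$, so Chebyshev gives at best $\beta=\delta$. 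For $\delta<1/2$ this is far below the required $1-\delta+\alpha$, and your closing claim that ``any positive $\beta$'' suffices contradicts the requirement you yourself state two sentences earlier: with $K_n\asymp n^{1-\delta+\alpha}$ mesh points and per-point failure probability $n^{-\beta}$, the union bound is $n^{1-\delta+\alpha-\beta}$, which does not vanish for small $\beta$. Moreover, no second-moment argument can do better here: the conjecture \eqref{eq:conj} indicates the true fluctuation scale of $\cD^n_t$ is $\sqrt{t/n}$, consistent with a variance genuinely of order $t/n$, so the exponent $\delta$ is not an artefact of a lossy estimate.

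The paper circumvents this by proving something much stronger than a variance bound, namely Proposition \ref{prop:unif-concentration}: for every fixed $a>0$, uniformly in $t\le n^{1-\varepsilon}$, the deviation probability is at most $n^{-a}$ --- in fact $\ee^{-\Theta(\log^2 n)}$ --- which then survives a union bound over the (polynomially many) jump times without any delicate bookkeeping of exponents. The mechanism is not a moment bound on $\cD^n_t$ itself but a sub-sampling argument: draw $K_n=\log^2 n$ uniformly random edges, approximate $\cD^n_t$ by the sampled fraction $\cD^A_t$ via Chernoff, then show via the dual system that with probability $1-n^{-a}$ at most a fraction $\gamma$ of the sampled edges ``interact'' before time $t$ (Lemma \ref{lemma:concentration_crw}, which is where the $O(t/n)$ meeting estimate of \cite{AB1} enters, combined with a careful conditional union bound over interaction patterns of the $2K_n$ walks), and that conditionally on non-interaction the discordance indicators are stochastically dominated by i.i.d.\ Bernoulli$(p_t)$ variables (Lemma \ref{lemma:nega-dep}), so a Chernoff bound on $\log^2 n$ nearly independent indicators yields the super-polynomial rate. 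Your weak-dependence intuition is the right one, but to repair your plan you would have to replace the Chebyshev step by some device of this kind (sub-sampling plus negative correlation, or a high-moment/exponential-moment estimate on a growing number of dual walks); as written, the union bound over the mesh cannot be closed.
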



\subsection{Open problems}
\label{sec:openproblems}

We point out two open problems. 
\begin{itemize}
	\item
	Theorem~\ref{th:unif-concentration} says that the concentration of the fraction of discordant edges is uniform up to times $n^{1-\delta}$, for any $\delta>0$. We expect that Theorem~\ref{th:unif-concentration} can be strengthened to the statement that for every $t_n$ such that $t_n/n \to 0$ and every $C_n \to \infty$,
	\begin{equation}
		\label{eq:conj}
		\Prob\big(\left|\cD^n_{t_n}-\Expect[\cD^n_{t_n}]\right| > C_n \sqrt{t_n/n}\,\big)\overset{\P}{\longrightarrow} 0.
	\end{equation}
	Note that, thanks to Azuma inequality, the concentration in \eqref{eq:conj} holds for the quantity $\cB_t^n$ in \eqref{eq:def-frac-of-blue}, because this is a martingale. Note further that the concentration in Theorem~\ref{th:unif-concentration} follows from \eqref{eq:conj} and a union bound. For now, \eqref{eq:conj} is beyond our reach.
	\item
	We expect that Theorems~\ref{theorem:expectation}--\ref{th:unif-concentration} can be extended to \emph{non-regular} sparse random graphs. We do not have a conjecture on how the function $f_d$ and the diffusion constant $\theta_d$ modify in this more general setting.
\end{itemize}


\subsection{Description of techniques}

All our proofs are based on the classical notion of duality between the voter model and a collection of coalescent random walks. In particular, on time scales $o(\log n)$, i.e., below the typical distance between two vertices, the analysis can be carried out by coupling a system of two random walks, starting at adjacent vertices and evolving on the $d$-regular random graph, with two random walks on the $d$-regular tree. Because the tree is regular, the distance of the two random walks can be viewed as the distance to the origin of a single biased random walk on $\N_0$. Note that the same does not hold when the tree is not regular. 

Clearly, in order to analyse the process on time scales $\Theta(\log n)$, i.e., on the typical distance between two vertices, the coupling argument must be combined with a finer control of the two random walks on the regular random graph. To this aim, we exploit the strategy developed in \cite{CFRmultiple}, which uses the so-called \emph{First Visit Time Lemma} (see Theorem \ref{t:fvtl}). Such control, together with a first-moment argument, is enough to compute the evolution of the expected number of discordant on every time scale. 

On the other hand, in order to obtain the concentration result in Theorem \ref{th:unif-concentration}, a much deeper analysis is required. Roughly, in order to have proper control on the correlations between the edges constituting $\cD_t^n$, we must analyse a dual system of random walks whose number grows with $n$. Exploiting a classical result by \cite{AB1}, we derive upper bounds for the number of meetings of a poly-logarithmic number of independent random walks evolving on the random graph for a time $n^{1-o(1)}$. Such a bound will be exploited in the forthcoming Proposition \ref{prop:unif-concentration} to deduce an upper bound for the deviation from the mean that is exponentially small in $n$ and uniform in time. This upper bound can in turn be translated to the result in Theorem \ref{th:unif-concentration} by taking a union bound.


\subsection{Outline}

Section~\ref{sec:notation} lists definitions and notations, recalls the graphical construction and duality, states a key lemma, and collects a few basic facts about regular random graphs and random walks on such geometries. Theorems~\ref{theorem:expectation}--\ref{th:unif-concentration} are proved in Sections~\ref{sec:exp}--\ref{sec:concentration}, respectively. Appendix~\ref{app:aux} contains two auxiliary facts for c\`adl\`ag processes.

\subsection{Acknowledgements} This research was supported by the European Union’s Horizon 2020 research and innovation programme under the Marie Skłodowska-Curie grant agreement no. 101034253, and by the NWO Gravitation project NETWORKS under grant no. 024.002.003. RB has counted on the support of ``Conselho Nacional de Desenvolvimento Científico e Tecnológico - CNPq'' grants ``Projeto Universal'' (402952/2023-5) and ``Produtividade em Pesquisa'' (308018/2022-2). MQ acknowledges financial support by the German Research Foundation (project number 444084038, priority program SPP2265). The authors thank Federico Capannoli for his helpful suggestions. Finally, FdH and MQ gratefully acknowledge hospitality at the Simons Institute for the Theory of Computing, Berkeley, USA, as part of the semester program ``Graph Limits and Processes on Networks: From Epidemics to Misinformation'' (GLPN22) in the Fall of 2022.

\section{Notation and preparation}
\label{sec:notation}

In this section we properly introduce the voter model on general graphs, and collect some results that will be needed along the way. In Section~\ref{subsec:voter_model} we introduce the graphical construction for the voter model and the associated dual process, known as \emph{coalescing random walks}. Section~\ref{subsec:FVTL} contains the \emph{First Visit Time Lemma}, which has been introduced by \cite{CF1}. Section~\ref{subsec:rrg_geometry} collects some useful facts about the geometry of $d$-regular random graphs and the behaviour of random walks on them. In what follows we drop the upper index $n$ to lighten the notation. 


\subsection{The voter model and coalescing random walks}
\label{subsec:voter_model}

Let $G(V,E)$ be a connected (possibly infinite and locally-finite) undirected graph. For each $x\in V$, let $d_x$ denote the degree of vertex $x$. The voter model on $G$ is defined as the interacting particle system with state space $\{0,1\}^V$ and generator $\mathcal{L}$ acting on local functions $f\colon\,V \to \mathbb{R}$ as
\begin{equation*}
	(\mathcal{L}f)(\eta) =\sum_{x \in V} \sum_{y \sim x} \frac{1}{d_x}  \big(f(\eta^{x\leftarrow y}) - f(\eta)\big),
	\qquad \eta\in\{0,1\}^V,
\end{equation*}
where $\eta^{x\leftarrow y}$ is obtained from $\eta$ as
\begin{equation*}
	\eta^{x\leftarrow y}(z) = \begin{cases}
		\eta(y), & \quad \text{if } z=x, \\
		\eta(z), & \quad \text{otherwise}.
	\end{cases}
\end{equation*}
In this system, agents (represented by the vertices of $G$) start with binary opinions and update these at rate one by copying the opinion of a uniformly chosen neighbour. For $t \geq 0$ and $x \in V$, denote by $\eta_{t}(x) \in \{0,1\}$ the opinion of vertex $x$ at time $t$.

When $G$ is finite and regular, i.e., $d_x=d$ for every $x\in V$, the number of vertices that have opinion $1$,
\begin{equation*}
	B_t = \sum_{x \in V} \eta_t(x),
\end{equation*}
is a martingale. Since $B_t$ is bounded by $|V|$, it converges, which implies that the voter model eventually fixates on a constant configuration, composed entirely of $0$'s or $1$'s, denoted by $\bar{0}$ and $\bar{1}$, respectively. Define the consensus time of $G$ as the stopping time
\begin{equation*}
	\tau_{\rm cons} = \inf \big\{t \geq 0\colon\, \eta_{t} \in \{\bar{0}, \bar{1}\} \big\}.
\end{equation*}

Let us next introduce the \emph{graphical construction} of the voter model. Associate to every oriented edge $\vec{e} = (x,y)$ a Poisson point process $\mathscr{P}_{\vec{e}}$ on $\mathbb{R}$ with intensity $1/d_x$. When a clock from $\vec{e}=(x,y)$ rings, vertex $x$ receives the opinion of vertex $y$. This constriction allows us to track the joint evolution of all the opinions via time duality. In order to determine $\eta_t(x)$, the state of a vertex $x$ at time $t$, we start from the space time point $(x,t)$, and go backwards in time, crossing every clock ring from an edge that reaches the current position of the path (see Figure \ref{fig:graphical_construction}).

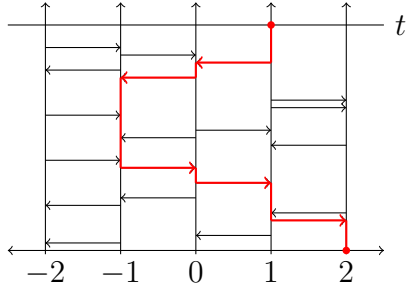
\begin{figure}[htbp]
	\begin{tikzpicture}
		\draw[<->] (-2.5,-0)--(2.5,0);
		\draw(-2.5,3)--(2.5,3);
		\node[right] at (2.5,3) {$t$};
		\foreach \x in {-2, -1, 0, 1, 2}{
			\draw[->](\x,-0.05)--(\x,3.3);
			\node[below] at (\x,0) {$\x$};
		}
		\foreach \y in {0.1, 0.6, 2.4}{
			\draw[<-](-2,\y)--(-1,\y);
		}
		\foreach \y in {0.7, 1.5, 2.3}{
			\draw[<-](-1,\y)--(0,\y);
		}
		\foreach \y in {0.2, 2.5}{
			\draw[<-](0,\y)--(1,\y);
		}
		\foreach \y in {0.5, 1.4}{
			\draw[<-](1,\y)--(2,\y);
		}
		\foreach \y in {1.2, 1.8, 2.7}{
			\draw[->](-2,\y)--(-1,\y);
		}
		\foreach \y in {1.1, 2.6}{
			\draw[->](-1,\y)--(0,\y);
		}
		\foreach \y in {0.9, 1.6}{
			\draw[->](0,\y)--(1,\y);
		}
		\foreach \y in {0.4, 1.9, 2}{
			\draw[->](1,\y)--(2,\y);
		}
		\fill[red] (1,3) circle (0.05);
		\draw[red, thick] (1,3) --(1,2.5);
		\draw[<-, red, thick](0,2.5)--(1,2.5);
		\draw[red, thick] (0,2.5) --(0,2.3);
		\draw[<-, red, thick](-1,2.3)--(0,2.3);
		\draw[red, thick] (-1,2.3) --(-1,1.1);
		\draw[->, red, thick](-1,1.1)--(0,1.1);
		\draw[red, thick] (0,1.1) --(0,0.9);
		\draw[->, red, thick](0,0.9)--(1,0.9);
		\draw[red, thick](1,0.9)--(1,0.4);
		\draw[->, red, thick](1,0.4)--(2,0.4);
		\draw[red, thick](2,0.4)--(2,0);
		\fill[red] (2,0) circle (0.05);
	\end{tikzpicture}
	\vspace{0.3cm}
	\caption{The graphical construction of the voter model on the integer lattice $\Z$. The red path allows us to determine that $\eta_t(1)$ is equal to $\eta_0(2)$.}
	\label{fig:graphical_construction}
\end{figure}

The process that tracks the origin of the opinions can be viewed as a collection of \emph{coalescing random walks}. To define this process we assume that $G$ is finite and consider a family $\{(X^v_t)_{t \geq 0}\}_{v \in V}$ of independent rate-one continuous-time random walks such that $X^v_0 = v$ almost surely. In what follows we identify $V$ with $[n] = \{1,\ldots,n\}$. When two walks meet at the same vertex they coalesce into a single walk (this can be seen immediately by the graphical representation in \ref{fig:graphical_construction}.)

Because $G$ is finite, any two independent random walks meet in finite time. Consequently, $\tau_{\rm coal}$, the first time at which all the random walks sit at the same vertex called the \emph{coalescence time}, satisfies $\tau_{\rm coal}<\infty$ almost surely. In particular, the above mentioned duality implies that $\tau_{\rm cons} \leq \tau_{\rm coal}$, and therefore any upper bound on the coalescence time immediately yields an upper bound on the consensus time of the voter model.

Given $x,y \in [n]$, let
\begin{equation}
	\label{TxyMeetDef}
	\tau_{\rm meet}^{x,y} = \inf\{t \geq 0\colon\, X_{t}^{x}=X_{t}^{y}\}
\end{equation}
denote the \emph{meeting time} of two independent continuous-time random walks starting from $x$ and $y$, respectively. More generally, if the two random walks are initialised randomly and independently according to two distributions $\mu$ and $\nu$ on $[n]$, then we denote their meeting time by $\tau_{\rm meet}^{\mu \otimes \nu}$. We note that the extension to infinite graphs $G$ follows from standard estimates on the probability that the process can be approximated by a finite dynamics in finite portions of space during a bounded amount of time.


\subsection{The First Visit Time Lemma}
\label{subsec:FVTL}

In this section we state the \emph{First Visit Time Lemma} (FVTL), which was introduced by \cite{CF1} and recently simplified by \cite{MQS}. This lemma contains estimates on the time a random walk takes to reach a distinguished site when starting from the stationary distribution. 

\begin{remark}\label{remark:dt}
	Since the FVTL is defined in discrete time, in what follows it will be easier to derive the results in discrete time and then translate them to continuous time. For this reason, when considering two random walks in discrete time, where at each step the random walk that moves is selected uniformly at random, we use the notation $\Prob^{\rm dt}$ to refer to their law.
\end{remark}

\begin{theorem}[First Visit Time Lemma; \cite{MQS}]
	\label{t:fvtl}
	Consider a sequence $((X^{(N)}_t)_{t\ge 0})_{N\in\N}$ of ergodic Markov chains, each living on a state space $\Omega_N$ of size $N$ with transition matrix $Q_N$ and stationary distribution $\mu_N$, and a sequence of target states $(x_N)_{N\in\N}$, with $x_N\in\Omega_N$. Define the mixing time
	\begin{equation}
		\label{eq:tmix}
		T_{\rm mix}^{(N)} = \inf\left\{t \geq 0\colon\,\max_{y_N,z_N \in \Omega_N} |Q_N^t(y_N,z_N)-\mu_N(z_N)| \leq \frac{1}{N^3} \right\},
	\end{equation}
	and assume that
	\begin{enumerate}
		\item $\lim_{N\to\infty} N^2\min_{y_N \in \Omega_N} \mu_{N}( y_N )=\infty$.
		\item $\lim_{N\to\infty} T_{\rm mix}^{(N)}\max_{ y_N \in \Omega_N} \mu_{N}(y_N)=0$.
		\item There exists a unique \emph{quasi-stationary distribution} associated to $x_N$, i.e., for all $N$ large enough the sub-matrix $[Q_N]_{x_{N}}$ in which the row and column indexed by $x_{N}$ have been erased is \emph{irreducible}. See also \cite[Remark 3.2 and Section 4.4]{QS22}.
	\end{enumerate}
	Then there exists a sequence $(\lambda_{x_N}^{(N)})_{N \in \N}$ such that, for every sequence $(T^{(N)})_{N\in\N}$ such that $T^{(N)}\ge T_{\rm mix}^{(N)}$ and such that {\rm (2)} is satisfied with $T^{(N)}$ instead of $T^{(N)}_{\rm mix}$,
	\begin{equation*}
		\lim_{N\to\infty} \left| \frac{\lambda^{(N)}_{x_N}}{\frac{\mu_{N}(x_N)}{R_{N}(x_N)}} -1 \right| = 0, 
		\qquad R_{N}(x_N) \coloneqq \sum_{s=0}^{T^{(N)}} Q_{N}^s(x_N, x_N),
	\end{equation*}
	and
	\begin{equation*}
		\lim_{N\to\infty} \sup_{t \geq 0} \left| \frac{\Pr_{\mu_{N}}(\tau_{x_N}>t)}{(1-\lambda^{(N)}_{x_N})^t } -1 \right| = 0,
	\end{equation*}
	with $\Pr_{\mu_{N}}$ the path measure of the Markov chain starting from $\mu_N$, and $\tau_{x_{N}}$ is the hitting time of $x_{N} \in \Omega_{N}$
	$$
	\tau_{x_N}\coloneqq\inf\{t\ge 0\colon\, X_t^{(N)}=x_N \}. 
	$$
\end{theorem}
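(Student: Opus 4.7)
The plan is to exploit the spectral theory of the sub-stochastic matrix $[Q_N]_{x_N}$ obtained from $Q_N$ by deleting the row and column indexed by $x_N$. Hypothesis~(3) makes this matrix irreducible, so Perron-Frobenius yields a unique Perron eigenvalue $1 - \lambda_{x_N}^{(N)} \in (0,1)$ with strictly positive left eigenvector. Normalising this eigenvector to a probability measure on $\Omega_N \setminus \{x_N\}$ defines the \emph{quasi-stationary distribution} $\nu_N$, which by construction satisfies $\Pr_{\nu_N}(\tau_{x_N} > t) = (1 - \lambda_{x_N}^{(N)})^{t}$ for every integer $t \geq 0$. The proof then splits into two steps: (a) identify $\lambda_{x_N}^{(N)}$ to leading order as $\mu_N(x_N)/R_N(x_N)$; (b) replace the initial distribution $\nu_N$ by $\mu_N$ with a multiplicative error vanishing uniformly in $t$.

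For step~(a), I would combine Kac's formula $\mathbb{E}_{x_N}[\tau_{x_N}^+] = 1/\mu_N(x_N)$ with an excursion decomposition at the mixing window of length $T^{(N)}$. On the event $\{\tau_{x_N}^+ \leq T^{(N)}\}$ the contribution to the expected return time is bounded by $T^{(N)}$ and the expected number of visits to $x_N$ within the window is $R_N(x_N)-1+o(1)$; on the complementary event the chain has essentially mixed by hypothesis~(2), so the residual hitting time is approximately geometric with parameter $\lambda_{x_N}^{(N)}$. Matching expectations yields an identity of the form
\begin{equation*}
R_N(x_N) \,+\, \frac{1 + o(1)}{\lambda_{x_N}^{(N)}} \;=\; \frac{1}{\mu_N(x_N)},
\end{equation*}
and hypothesis~(2), which forces $R_N(x_N)\mu_N(x_N) \to 0$, then yields $\lambda_{x_N}^{(N)} \sim \mu_N(x_N)/R_N(x_N)$ as claimed.

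For step~(b), the key observation is that after $T^{(N)}$ steps conditioned on not having hit $x_N$, the chain distribution is close in total variation to $\nu_N$: the conditioned kernel differs from $Q_N$ only through a removal of mass of order $\mu_N(x_N) \to 0$, so the mixing bound in~(2) transfers to the conditioned dynamics. Iterating the Markov property along consecutive windows of length $T^{(N)}$ and controlling the per-window multiplicative error by a factor $1 + o(\lambda_{x_N}^{(N)} T^{(N)})$, the compounded error over the relevant horizon $t \lesssim 1/\lambda_{x_N}^{(N)}$ remains $1+o(1)$. Hypothesis~(1), by ruling out a too-concentrated stationary measure, ensures that $\nu_N$ and $\mu_N$ are comparable on $\Omega_N \setminus \{x_N\}$, which handles the short-time regime $t < T^{(N)}$, where both sides of the target estimate are already $1-o(1)$.

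The main obstacle will be the uniformity in $t$: in the tail $t \gg 1/\lambda_{x_N}^{(N)}$ both the true survival probability and its geometric approximation are exponentially small, so even a small additive discrepancy between the true conditioned distribution and $\nu_N$ could produce a large \emph{multiplicative} error. The delicate bookkeeping step, which is the technical heart of the approach of \cite{MQS}, is to prove that the per-window error decays fast enough that its compounding over $\Theta(1/(\lambda_{x_N}^{(N)} T^{(N)}))$ windows does not spoil the approximation; the quantitative mixing input from hypothesis~(2) and the lower bound on $\min_y \mu_N(y)$ from hypothesis~(1) are both essential to this estimate.
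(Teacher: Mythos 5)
The paper does not prove Theorem \ref{t:fvtl}: it is imported verbatim from \cite{MQS} (building on \cite{CF1}), so there is no in-paper proof to compare against. Your outline does follow the same broad strategy as those references (Perron--Frobenius on the substochastic matrix $[Q_N]_{x_N}$, the exact geometric law $\Pr_{\nu_N}(\tau_{x_N}>t)=(1-\lambda^{(N)}_{x_N})^t$ under the quasi-stationary distribution, an excursion/window analysis to identify $\lambda^{(N)}_{x_N}$, and a transfer of the initial law from $\nu_N$ to $\mu_N$ with multiplicative control uniform in $t$). However, step (a) contains a genuine error. Your ``matching expectations'' identity
\begin{equation*}
R_N(x_N)+\frac{1+o(1)}{\lambda^{(N)}_{x_N}}=\frac{1}{\mu_N(x_N)}
\end{equation*}
cannot be right: since hypothesis (2) forces $R_N(x_N)\mu_N(x_N)\le (T^{(N)}+1)\mu_N(x_N)\to 0$, this identity would give $\lambda^{(N)}_{x_N}=(1+o(1))\mu_N(x_N)$, not $\mu_N(x_N)/R_N(x_N)$, contradicting both the theorem and your own next sentence. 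The point of the theorem (and of its use in Proposition \ref{lemma:firstvisitDelta}, where $R\to\frac{d-1}{d-2}\neq 1$) is precisely that $R_N$ enters \emph{multiplicatively}, not additively: the expected number of visits within a window is a count, not a time, and it must be converted into a rate correction. The missing ingredient is the identification of $1/R_N(x_N)$ with the per-visit escape probability from the window (visits per excursion are roughly geometric with mean $R_N$), so that the Kac decomposition reads $\lambda^{(N)}_{x_N}\approx p_{\rm esc}\,\mu_N(x_N)$ with $p_{\rm esc}\approx 1/R_N(x_N)$; equivalently one can bypass Kac's return-time formula altogether and use the stationary-hitting identity $\mathbb{E}_{\mu_N}[\tau_{x_N}]=Z_{x_Nx_N}/\mu_N(x_N)$ with $Z_{x_Nx_N}=\sum_{t\ge0}(Q_N^t(x_N,x_N)-\mu_N(x_N))=R_N(x_N)+o(1)$ under (2), and then match $1/\lambda^{(N)}_{x_N}$ with this mean.

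Two smaller points. First, your appeal to hypothesis (1) to make ``$\nu_N$ and $\mu_N$ comparable'' is not justified as stated: (1) only rules out atoms of mass below $\asymp N^{-2}$ and does not by itself compare the quasi-stationary and stationary measures; in \cite{MQS} this hypothesis serves to control error terms in the window analysis (e.g.\ hitting probabilities from worst-case states within $T^{(N)}$), and the closeness of the conditioned law to $\nu_N$ comes from the mixing input (2) together with the smallness of $\lambda^{(N)}_{x_N}T^{(N)}$. Second, your step (a) uses ``the residual hitting time is approximately geometric with parameter $\lambda^{(N)}_{x_N}$'' before that approximation has been established from $\mu_N$; this circularity is resolvable (one first proves the statement from $\nu_N$, then bootstraps), but it needs to be made explicit rather than assumed.
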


In words, the FVTL says that, under the assumptions stated, the hitting time of a state $x_N$ is well approximated by a geometric random variable with mean $\frac{R_N(x)}{\mu_N(x)}$.


\subsection{The geometry of regular random graphs}
\label{subsec:rrg_geometry}

In this section we collect a number of definitions and asymptotic results about the \emph{typical geometry} of regular random graphs. We start by introducing notation and conclude by stating some basic facts regarding typical $d$-regular random graphs. 

Given a connected graph $G=(V,E)$ and two vertices $x,y \in V$, we denote by ${\rm dist}(x,y)$ the distance between $x$ and $y$, given by the number of edges of the shortest path joining $x$ and $y$. For a vertex $x \in V$ and a radius $h \geq 1$, we denote by $\mathfrak{B}_{x}(h)$ the ball of radius $h$ centered at $x$, seen as a subgraph of $G$. The tree excess of a graph quantifies how far away from a tree a given connected graph is.

\begin{definition}[Tree excess]
	Given a connected graph $G=(V,E)$, the \emph{tree excess} of $G$, denoted by $\tx(G)$, is the minimum number of edges that must be removed from $G$ in order to obtain a tree. This can also be written as
	\begin{equation*}
		\tx(G) = |E|-|V|-1.
	\end{equation*}
\end{definition}

Next, we introduce the notion of \emph{locally tree-like vertices and edges}. 	

\begin{definition}[Locally tree-like vertex]
	Given a graph $G=(V,E)$, we say that a vertex $x \in V$ is locally tree-like up to distance $\ell \geq 1$, denoted by $x \in {\rm LTL}(\ell)$, if the subgraph induced by the vertices at distance at most $\ell$ from $x$, i.e., $\mathfrak{B}_x(\ell)$, is a tree.
\end{definition}

\begin{definition}[Locally tree-like edge]
	Given a graph $G=(V,E)$ and an edge $e \in E$, we say that $e = \{x,y\}$ is locally tree-like up to distance $\ell \geq 1$, denoted by $e \in {\rm LTLE}(\ell)$, if the subgraph induced by the vertices at distance at most $\ell$ from $x$ or $y$, i.e., $\mathfrak{B}_x(\ell) \cup \mathfrak{B}_y(\ell)$, after removal of the edge $e$, is composed of two disjoint trees of depth at most $\ell$.
\end{definition}

For $d \geq 3$ and $n>d$ such that $nd$ is even, there exists at least one graph with $n$ vertices and constant degree equal to $d$. In particular, for this range of parameters the $d$-regular random graph of size $n$, drawn from the uniform distribution on the set of all graphs with $n$ vertices and constant degree equal to $d$, is well defined. Denote by $m=dn/2$ the number of edges of this random graph. The following proposition collects properties of the $d$-regular random graph that will be needed in the rest of the paper.

\begin{proposition}[Geometry of regular random graphs and random walks; \cite{LS}]
	\label{prop:geometry-rrg}
	Fix $d \geq 3$, and let $G_{d,n}(\omega)$ denote the $d$-regular random graph of size $n$. Then, $\whp$,
	\begin{enumerate}
		\item $G$ is connected.
		\item For every $x\in [n]$, $\tx(\mathfrak{B}_x(\tfrac{1}{5}\log_d n))\leq 1$.
		\item $|{\rm LTLE}(\tfrac{1}{5} \log_d n)| = m-o(n)$.
		\item There exists a constant $C=C_d>1$ such that, for $t\ge C\log n$ and every $\varepsilon>0$,
		\begin{equation}\label{TV}
			d_{\rm TV}(t):=\max_{x\in [n]}\| P^t(x,\cdot)-\pi \|_{\rm TV}\le \varepsilon, 
		\end{equation} 
		where $P^t(x,\cdot)$ denotes the law at time $t$ of the random walk starting from $x$. In particular, $\displaystyle\lim_{n\to\infty}d_{\rm TV}(t_n)=0$ for $t_n$ such that $\displaystyle\lim_{n\to\infty}(\log n)/t_n=0$.
	\end{enumerate}
\end{proposition}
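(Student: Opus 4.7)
\emph{Overall strategy.} My plan is to work in the configuration model $\mathrm{CM}(d,n)$ and transfer each conclusion to the uniform random $d$-regular graph using the classical fact that simplicity in $\mathrm{CM}(d,n)$ has probability bounded away from zero as $n\to\infty$; hence any event holding $\whp$ in $\mathrm{CM}(d,n)$ also holds $\whp$ in $G_{d,n}(\omega)$. Parts~(1)--(3) I would derive from elementary first-moment computations on the sequential pairing of half-edges, whereas for part~(4) I would invoke the mixing/cutoff theorem of \cite{LS} as a black box.

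\emph{Parts~(1)--(3): first moments in $\mathrm{CM}(d,n)$.} Part~(1) is classical: a first-moment bound on the expected number of cuts of each size in $\mathrm{CM}(d,n)$ shows that for $d\geq 3$ the graph is connected $\whp$. For part~(2), set $h=\tfrac{1}{5}\log_d n$, fix $x\in[n]$, and generate $\mathfrak{B}_x(h)$ in $\mathrm{CM}(d,n)$ by pairing half-edges one at a time in a breadth-first fashion. Since $|\mathfrak{B}_x(h)|\leq d(d-1)^{h-1}=O(n^{1/5})$, each newly revealed half-edge closes a cycle with probability $O(n^{-4/5})$, so $\Expect[\tx(\mathfrak{B}_x(h))]=O(n^{-3/5})$ and the probability of observing two or more cycle closures inside $\mathfrak{B}_x(h)$ is $O(n^{-6/5})$ by a union bound over pairs of explorations. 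Union-bounding once more over the $n$ choices of $x$ yields $\tx(\mathfrak{B}_x(h))\leq 1$ simultaneously for every vertex with $\whp$, which is (2). For part~(3), the same pairing estimate applied to the union of two adjacent balls shows that the probability that $\mathfrak{B}_x(h)\cup\mathfrak{B}_y(h)$ has tree excess at least one in addition to the edge $\{x,y\}$ itself is $O(n^{-3/5})$; hence the expected number of non-locally-tree-like edges is $O(n^{2/5})=o(n)$, and Markov's inequality gives $|\mathrm{LTLE}(h)|=m-o(n)$ with $\whp$.

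\emph{Part~(4) and the main obstacle.} For part~(4) I would invoke \cite[Theorem~1]{LS}, which establishes cutoff for the simple random walk on the uniform $d$-regular random graph at time $\tfrac{d}{d-2}\log_{d-1} n$; in particular, there exists $C=C_d>1$ such that for every $t\geq C\log n$ one has $d_{\rm TV}(t)\leq\varepsilon$ with $\whp$, which is precisely the content of~\eqref{TV}. The main technical difficulty of the proposition is therefore entirely concentrated in this last item: controlling the total-variation mixing of the walk requires the delicate spectral and entropic machinery of Lubetzky and Sly, while parts (1)--(3) reduce to routine first-moment calculations once the configuration-model bookkeeping has been set up.
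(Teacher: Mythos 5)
Your proposal is correct, and it matches the paper's treatment: the paper states this proposition without proof, attributing part (4) to \cite{LS} and leaving (1)--(3) as standard random-graph facts, which is exactly what your configuration-model first-moment computations (with the transfer to the uniform model via the simplicity probability being bounded away from zero) and your black-box appeal to Lubetzky--Sly supply. The only point worth a remark is that \cite{LS} treats the discrete-time walk while the proposition concerns the continuous-time rate-one walk, but since only an upper bound of order $C\log n$ on the mixing time is needed, this transfers immediately by Poissonisation (or, alternatively, from Friedman's spectral gap bound), so there is no gap.
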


\section{Computation of the expectation}
\label{sec:exp}

In this section we prove Theorem \ref{theorem:expectation}, in particular, we provide a $\whp$ first-order estimate of the expected density of discordant edges at time $t=t_n$, for every choice of the sequence $(t_n)_{n\in\N}$. In Section~\ref{subsec:voter_trees} we collect some basic results for the voter model on the infinite $d$-regular tree $\cT_d$, the \emph{local approximation} of the $d$-regular random graph. We show that the density of discordant edges at time $t$ on $\cT_d$ with ${\rm Ber}(u)$ initialisation behaves as $2u(1-u)(1-f_d(t))$. This will be done by exploiting duality: we reduce the problem to the analysis of a pair of coalescing random walks starting at the two extremes of an edge of the tree. In Section \ref{subsec:1} we show that a classical coupling argument suffices to show that the approximation holds for every sequence $t = (t_{n})_{n \in \N}$ such that $t/\log n \to 0$. When $t$ starts to become comparable with the typical distance of the graph, i.e., $t_{n}=\Theta(\log n)$, the coupling argument fails and we need a refined analysis of the process to compute the expected density. This scenario will be handled in Section \ref{subsec:2}. The idea is to track a pair of random walks starting from the two extremes of a typical (hence, \emph{locally tree-like}) edge up to their first meeting. Using the First Visit Time Lemma, we show that, under the event that the two random walks do not meet after a short time, the time of their first meeting is well approximated by exponential random variable with rate $\theta_d^{-1}n$.


\subsection{Discordant edges on the regular tree}
\label{subsec:voter_trees}

Let $\mathcal{T}_d$ denote the \emph{infinite $d$-regular tree} with $d\geq 3$. Our first lemma concerns the probability that two independent random walks, starting at distance one from each other, do not meet within a time $t$. Recall that $\Prob^{\cT_{d}}$ denotes the law of two independent random walks starting from the end vertices of an edge in $\cT_{d}$.

\begin{lemma}[Meeting on a regular tree]
	\label{lemma:gambler}
	Let $x,y \in \mathcal{T}_d$ with ${\rm dist}(x,y)=1$. Then
	\begin{equation}
		\label{eq:small_meeting_time}
		1-f_d(t) =\Prob^{\cT_d}\big(\tau^{x,y}_{\rm meet} \leq t \big)= \sum_{\kappa =0}^{\infty}
		\ee^{-2t}\frac{(2t)^\kappa}{\kappa!}\sum_{s=0}^{\lfloor \frac{\kappa-1}{2} \rfloor}\binom{2s}{s}\frac{1}{s+1} 
		\Big( \frac{1}{d} \Big)^{s+1}\Big( \frac{d-1}{d} \Big)^{s}.
	\end{equation}
	From this it follows that
	\begin{equation}
		\label{eq:lemma_gambler}
		\Prob^{\cT_d}(\tau^{x,y}_{\rm meet}=\infty) = \frac{d-2}{d-1} = \theta_d.
	\end{equation}
\end{lemma}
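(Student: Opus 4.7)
The proof plan rests on a single structural observation: on the regular tree $\cT_d$, the distance $D_t := \mathrm{dist}(X^x_t,X^y_t)$ between the two walks is itself a Markov chain on $\N_0$, because from any vertex at distance $k\ge 1$ from the other walker there is exactly one neighbour reducing the distance and $d-1$ neighbours increasing it. Since each walker jumps at rate $1$ and, given it jumps, moves toward the other with probability $1/d$ and away with probability $(d-1)/d$, the chain $D_t$ starting at $D_0=1$ has total jump rate $2$, with transition probabilities $p=1/d$ (to $k-1$) and $q=(d-1)/d$ (to $k+1$) at the embedded discrete chain. The meeting time $\tau^{x,y}_{\mathrm{meet}}$ is exactly the hitting time of $0$ by $D_t$.

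Conditioning on the Poisson$(2t)$-distributed number of jumps up to time $t$, I would write
\begin{equation*}
\Prob^{\cT_d}(\tau^{x,y}_{\mathrm{meet}}\le t)=\sum_{\kappa=0}^{\infty}\ee^{-2t}\frac{(2t)^\kappa}{\kappa!}\,\Prob^{\mathrm{dt}}(\tau_0^{\mathrm{dt}}\le \kappa),
\end{equation*}
where $\tau_0^{\mathrm{dt}}$ is the first hitting time of $0$ by the $(p,q)$-biased walk on $\N_0$ started at $1$. By the standard cycle-lemma/ballot argument, the law of $\tau_0^{\mathrm{dt}}$ is given by the Catalan formula
\begin{equation*}
\Prob^{\mathrm{dt}}(\tau_0^{\mathrm{dt}}=2s+1)=\frac{1}{s+1}\binom{2s}{s}p^{s+1}q^{s},\qquad s\ge 0,
\end{equation*}
since a path from $1$ to $0$ in $2s+1$ steps must take $s+1$ down-steps and $s$ up-steps, and the number of such paths that first touch $0$ at step $2s+1$ is the Catalan number $\frac{1}{s+1}\binom{2s}{s}$. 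Summing over $s\le \lfloor(\kappa-1)/2\rfloor$ and substituting $p=1/d$, $q=(d-1)/d$ yields precisely \eqref{eq:small_meeting_time}.

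For the second claim I would pass to the limit $t\to\infty$: by monotone convergence,
\begin{equation*}
\Prob^{\cT_d}(\tau^{x,y}_{\mathrm{meet}}<\infty)=\sum_{s=0}^{\infty}\frac{1}{s+1}\binom{2s}{s}p^{s+1}q^{s}=p\,C(pq),
\end{equation*}
where $C(z)=\sum_{s\ge 0}\frac{1}{s+1}\binom{2s}{s}z^s=\frac{1-\sqrt{1-4z}}{2z}$ is the Catalan generating function. With $pq=(d-1)/d^2$ one has $1-4pq=(d-2)^2/d^2$, so $\sqrt{1-4pq}=(d-2)/d$ (here $d\ge 3$ ensures non-negativity without sign ambiguity), and a short computation gives $p\,C(pq)=1/(d-1)$. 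Therefore $\Prob^{\cT_d}(\tau^{x,y}_{\mathrm{meet}}=\infty)=1-\tfrac{1}{d-1}=\tfrac{d-2}{d-1}=\theta_d$, proving \eqref{eq:lemma_gambler}.

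There is no real obstacle here; the only care needed is the reduction to a one-dimensional Markov chain (which relies on the tree being regular, so that the splitting probabilities $1/d$ and $(d-1)/d$ do not depend on position) and the correct parity bookkeeping in the ballot-type enumeration. Both are routine, so this lemma is essentially a sanity check that will feed into the short-time asymptotics of $\E_u[\cD^n_{t_n}]$ via the locally tree-like structure of $G_{d,n}(\omega)$.
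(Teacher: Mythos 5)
Your proposal is correct and follows essentially the same route as the paper for the identity \eqref{eq:small_meeting_time}: reduce to the distance chain $Z_t={\rm dist}(X_t,Y_t)$, which on the regular tree is a rate-$2$ biased walk on $\N_0$ with down/up probabilities $1/d$ and $(d-1)/d$, Poissonise over the number of jumps, and use the classical first-passage (Catalan/ballot) formula for the skeleton chain. The only divergence is in \eqref{eq:lemma_gambler}: the paper's proof gets $\Prob^{\cT_d}(\tau^{x,y}_{\rm meet}<\infty)=\tfrac{1}{d-1}$ directly from the gambler's ruin escape probability of the biased walk, whereas you sum the series $\sum_{s\ge 0}C_s\,p^{s+1}q^{s}=p\,C(pq)$ via the Catalan generating function; this is exactly the alternative computation the paper records in the remark following the lemma, so it is a valid (if slightly longer) substitute, and it has the minor virtue of checking consistency between \eqref{eq:small_meeting_time} and \eqref{eq:lemma_gambler} explicitly.
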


\begin{proof}
	Let $(X_t)_{t \geq 0}$ and $(Y_t)_{t \geq 0}$ be two independent continuous-time random walks starting from $x \sim y$, respectively. Put $Z_t = {\rm dist}(X_t,Y_t)$ and note that $(Z_t)_{t \geq 0}$ is a continuous-time biased random walk on $\N_0$ that starts at $1$, jumps at rate $2$, and has jump distribution given by
	\begin{equation}
		\label{eq:jump_distribution}
		p(z,z+1) = 1-p(z,z-1) = \frac{d-1}{d}, \qquad z \in \N. 
	\end{equation}
	The claim in \eqref{eq:small_meeting_time} follows from the construction of the random walk $(Z_t)_{t \geq 0}$ via a Poisson process of rate $2$ for the jump times, together with a skeleton chain given by the discrete-time biased random walk with jump distribution given by \eqref{eq:jump_distribution}, and explicit expressions for hitting times of discrete-time random walks based on path-counting arguments, see~\cite[Theorem~5.7.7]{durrett_book}.
	
	To get \eqref{eq:lemma_gambler} it is enough to note that, by the standard Gambler's ruin argument,
	\begin{equation*}
		\Prob^{\cT_d}\big(\tau^{x,y}_{\rm meet} < \infty\big) = \Pr\big(Z_t=0 \text{ for some } t \geq 0 \mid Z_0=1 \big) 
		= \frac{p(0,-1)}{p(0,1)} = \frac{1}{d-1},
	\end{equation*}
	from which the claim follows.
\end{proof}

\begin{remark}
	Note that \eqref{eq:lemma_gambler} can be derived from \eqref{eq:small_meeting_time} by direct computation. Indeed, clearly $1-f_d(0)=0$. The generating function $c(x) = \sum_{k=0}^\infty C_k x^k$ for the Catalan numbers $C_k=\binom{2k}{k}\frac{1}{k+1}$ solves the quadratic equation $c(x) = 1 + xc(x)^2$. Since $C_0=1$, we have 
	\begin{equation*}
		c(x)=\frac{1-\sqrt{1-4x}}{2x}.
	\end{equation*} 
	It follows that 
	\begin{equation*}
		1-f_d(\infty) = \frac{1}{d}\, c\left(\frac{d-1}{d^2}\right) = \frac{1}{d}\,\frac{1-\frac{d-2}{d}}{2\:\frac{d-1}{d^2}}=\frac{1}{d-1}=1-\theta_d.
	\end{equation*}
	Indeed, the first equality follows from the fact that in the limit as $t \to\infty$ the terms in \eqref{eq:small_meeting_time} with $\kappa\to\infty$ become dominant.
\end{remark}

\begin{lemma}[Density of discordant edges]
	\label{lemma:voter_model_tree}
	Consider the voter model on $\mathcal{T}_d$ with initial density $u$. Then, for any given edge $e$ and any time $t \geq 0$,
	\begin{equation*}
		\Prob^{\cT_d}_u \big( e \in D_t \big) = 2u(1-u)f_d(t),
	\end{equation*}
	where $f_d(t)$ is given by \eqref{eq:small_meeting_time}.
\end{lemma}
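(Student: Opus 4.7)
The plan is a direct application of the voter/coalescing-random-walks duality described in Section~\ref{subsec:voter_model}, combined with Lemma~\ref{lemma:gambler}. The argument is short; the only point that requires a brief justification is that duality on the infinite tree is legitimate, and this follows from standard truncation to a finite ball (the dual walks almost surely remain in a bounded region up to time $t$).

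Fix an edge $e=(x,y)$ with $x\sim y$ in $\cT_d$. By duality, the joint law of $(\eta_t(x),\eta_t(y))$ under $\Prob^{\cT_d}_u$ coincides with the joint law of $(\eta_0(\xi^x_t),\eta_0(\xi^y_t))$, where $(\xi^x_s,\xi^y_s)_{0\le s\le t}$ is a pair of coalescing continuous-time simple random walks on $\cT_d$ started at $x$ and $y$ respectively, and $\eta_0$ is the initial Bernoulli$(u)$ field, independent of the walks. First I would condition on the event $\{\tau^{x,y}_{\mathrm{meet}}\le t\}$, on which the two coalescing walks have merged by time $t$, so $\xi^x_t=\xi^y_t$ and hence $\eta_t(x)=\eta_t(y)$, contributing nothing to the discordance probability. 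On the complementary event $\{\tau^{x,y}_{\mathrm{meet}}>t\}$, the walks occupy two distinct vertices at time $t$, and since $\eta_0$ is a product Bernoulli$(u)$ field independent of the dual walks, the two evaluated opinions $\eta_0(\xi^x_t)$ and $\eta_0(\xi^y_t)$ are independent Bernoulli$(u)$, so they disagree with probability exactly $2u(1-u)$.

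Combining the two cases and using the key fact that the meeting time of the \emph{coalescing} dual pair coincides (as a random variable) with the meeting time of two \emph{independent} random walks started at $x$ and $y$ (they follow identical laws up to the first meeting, and after that both descriptions freeze the event we care about), I obtain
\begin{equation*}
\Prob^{\cT_d}_u(e\in D_t)=2u(1-u)\,\Prob^{\cT_d}\big(\tau^{x,y}_{\mathrm{meet}}>t\big)=2u(1-u)f_d(t),
\end{equation*}
by Lemma~\ref{lemma:gambler}. There is no serious obstacle: the potential worry is well-posedness of duality on the infinite tree $\cT_d$, which I would address in one line by noting that up to any finite time $t$ the two dual walks stay in a random but almost surely finite subgraph, allowing the standard finite-graph duality identity to be applied verbatim.
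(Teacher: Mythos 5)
Your proposal is correct and follows essentially the same route as the paper: duality reduces the discordance event to the dual pair of walks from $x$ and $y$ not coalescing by time $t$ and landing on two distinct vertices whose independent ${\rm Ber}(u)$ opinions disagree, giving the factor $2u(1-u)$, with $f_d(t)$ supplied by Lemma~\ref{lemma:gambler}. The extra remarks you add (identifying the coalescing pair's meeting time with that of independent walks up to the first meeting, and truncating to a finite ball to justify duality on the infinite tree) are sound and merely make explicit points the paper treats as standard.
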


\begin{proof}
	Put $e=\{x,y\}$. Note that in order to have $e \in D_t$, by duality, the (backward) random walks starting from $x$ and $y$ at time $t$ do not coalesce before time $0$ and end up at vertices with distinct initial opinions. The initial opinions are independent of the trajectories of these random walks. The claim follows by noting that $f_d(t)$ is the probability that the two random walks do not meet in time $t$, and $2u(1-u)$ is the probability that two distinct vertices have different initial opinions.
\end{proof}	


\subsection{Expectation for small times}
\label{subsec:1}

We start this section with a general bound that does not require the graph $G$ to be random nor the size of $G$ to grow to infinity.

\begin{lemma}[Discordant LTLEs]
	\label{lemma:general_graph}
	Let $G = (V,E)$ be any $d$-regular graph and let $e= \{x,y\} \in E$ be ${\rm LTLE}(\ell)$ for some $\ell$. Consider the voter model on $G$ with initial density $u$. Then, for any time $T>0$,
	\begin{equation*}
		\sup_{t\in[0,T]} \left|\Prob_u(e\in D_t )-2u(1-u)f_d(t)\right| \leq \frac{4T}{\ell}.
	\end{equation*}
\end{lemma}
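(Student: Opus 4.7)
The plan is to reduce the statement, via duality, to a comparison of the meeting times of two random walks on $G$ and on $\cT_d$, and then to couple the two pairs of walks so that they agree until one of them leaves the locally tree-like neighbourhood of $e$.

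\emph{Step 1 (Duality).} I will first invoke the graphical construction of Section~\ref{subsec:voter_model}. The edge $e=\{x,y\}$ is discordant at time $t$ if and only if the two backward coalescing walks started at $x$ and $y$ at time $t$ have not met by time $0$ and their (distinct) endpoints carry different initial opinions. Since the initial opinions are i.i.d.\ $\mathrm{Ber}(u)$ and independent of the trajectories, this gives
\begin{equation*}
\Prob_u(e\in D_t) = 2u(1-u)\,\Prob\bigl(\tau^{x,y}_{\rm meet} > t\bigr),
\end{equation*}
where the meeting time refers to two independent rate-$1$ walks on $G$. By Lemma~\ref{lemma:gambler}, $f_d(t)=\Prob^{\cT_d}(\tau^{x,y}_{\rm meet}>t)$. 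Thus it is enough to prove
\begin{equation*}
\sup_{t\in[0,T]}\bigl|\Prob(\tau^{x,y}_{\rm meet}>t) - \Prob^{\cT_d}(\tau^{x,y}_{\rm meet}>t)\bigr| \leq \frac{2T}{\ell},
\end{equation*}
as $2u(1-u)\leq 1/2$ then yields the stated bound $4T/\ell$ with room to spare.

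\emph{Step 2 (Coupling on a common subtree).} Next, I will exploit the hypothesis $e\in\mathrm{LTLE}(\ell)$: the subgraph $\mathfrak{B}_x(\ell)\cup\mathfrak{B}_y(\ell)$, with the edge $e$ reinserted, is isomorphic to a neighbourhood of an edge in $\cT_d$. On this common structure I will couple the two walks $(X_s,Y_s)_{s\le t}$ on $G$ with two walks $(\tilde X_s,\tilde Y_s)_{s\le t}$ on $\cT_d$ started from the endpoints of an edge, so that the pairs coincide up to the stopping time
\begin{equation*}
\tau_{\rm exit} \coloneqq \inf\bigl\{s\geq 0 : X_s\notin\mathfrak{B}_x(\ell)\text{ or }Y_s\notin\mathfrak{B}_y(\ell)\bigr\}.
\end{equation*}
On the event $\{\tau_{\rm exit}>t\}$ the indicators $\{\tau^{x,y}_{\rm meet}>t\}$ for the two pairs agree, so the standard coupling inequality yields
\begin{equation*}
\bigl|\Prob(\tau^{x,y}_{\rm meet}>t) - \Prob^{\cT_d}(\tau^{x,y}_{\rm meet}>t)\bigr| \leq \Prob(\tau_{\rm exit}\leq t).
\end{equation*}

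\emph{Step 3 (Bounding the exit time).} Finally, to leave its ball of radius $\ell$ each walk must perform at least $\ell$ jumps. Since the number of jumps of a rate-$1$ walk by time $t$ is $\mathrm{Poisson}(t)$, Markov's inequality gives $\Prob(\mathrm{Poisson}(t)\geq \ell)\leq t/\ell$; a union bound over the two walks and over $t\in[0,T]$ produces $\Prob(\tau_{\rm exit}\leq T)\leq 2T/\ell$, which closes the argument. I do not foresee any significant obstacle: the proof is a routine duality-plus-coupling computation, and the only point requiring care is to verify that the $\mathrm{LTLE}(\ell)$ condition literally makes the two random-walk constructions identical on the common subtree until $\tau_{\rm exit}$, which is immediate from the definitions.
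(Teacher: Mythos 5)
Your proof is correct and follows essentially the same route as the paper's: duality to reduce to the meeting time of two independent walks, coupling with the pair on $\cT_d$ inside the locally tree-like neighbourhood of $e$, and a Markov/Poisson bound on the number of jumps to control the coupling failure probability. The only (harmless) difference is bookkeeping: you factor out $2u(1-u)$ via the exact duality identity and compare non-meeting probabilities, which even gives a slightly better constant, whereas the paper compares the joint discordance events directly via a triangle inequality on the good event, arriving at $4T/\ell$.
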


\begin{proof}
	Note first that the term $2u(1-u)f_d(t)$ corresponds to the probability that a given edge in $\mathcal{T}_{d}$ is discordant at time $t$. In particular, if we consider the dual system with coalescing random walks, then the probabilities we are interested in are the same, provided the random walks do not leave the tree-like neighbourhood of $e$. Therefore consider the event
	\begin{equation*}
		\cE_T = \big\{ \text{the number of jumps the random walks perform before time } T \text{ is bounded by } \ell \big\},
	\end{equation*}
	and observe that, by the Markov inequality,
	\begin{equation*}
		\Prob\big(\cE_T^{c} \big) = \Prob^{\cT_d}\big(\cE_T^{c}\big) \leq \Prob^{\cT_d}\big(X \geq \ell\big) \leq \frac{2T}{\ell},
	\end{equation*}
	where $X \sim {\rm Poisson}(2T)$ stochastically dominates the number of jumps that both random walks make up to time $T$. By construction, for $t \leq T$ we can write
	\begin{equation*}
		\Prob_u\big(e \in D_t , \cE_T\big) = \Prob^{\cT_d}_u\big(e \in D_t, \cE_T \big),
	\end{equation*}
	from which we can estimate
	\begin{equation*}
		\begin{split}
			&\left|\Prob_u(e\in D_t )-2u(1-u)f_d(t)\right| 
			= \Big|\Prob_u\big(e \in D_t\big) - \Prob^{\cT_d}_u\big(e \in D_t \big)\Big| \\
			& \leq \Big|\Prob_u\big(e\in D_t, \cE_T \big) - \Prob_u^{\cT_d}\big(e \in D_t, \cE_T\big)\Big| + 2\Prob^{\cT_d}\big(\cE_T^{c}\big) 
			\leq \frac{4T}{\ell},
		\end{split}
	\end{equation*}
	which settles the claim.
\end{proof}

\begin{proposition}[Short time average]
	\label{prop:mean_linear_time}
	Consider the voter model on a regular random graph $G_{d,n}(\omega)$ with initial density $u\in(0,1)$. Then, for any time $t_n$ satisfying $t_n/\log n\to0$,
	\begin{equation}
		\label{eq:prop_mean_linear_time}
		\left|\Expect_u[\cD_{t_n}]-2u(1-u)f_d(t_n)\right| \overset{\P}{\longrightarrow} 0.
	\end{equation}
\end{proposition}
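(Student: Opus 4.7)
The plan is to average the single-edge bound from Lemma \ref{lemma:general_graph} over the edges of $G_{d,n}(\omega)$, exploiting the fact that $\whp$ almost every edge is locally tree-like to the appropriate depth. Since $|D_{t_n}|=\sum_{e\in E}\ind\{e\in D_{t_n}\}$, linearity of expectation immediately gives
$$\Expect_u[\cD_{t_n}]=\frac{1}{m}\sum_{e\in E}\Prob_u(e\in D_{t_n}),$$
so the task reduces to controlling each term uniformly in $e$ and then integrating out over the edge set.

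The key choice is to take $\ell_n = \tfrac{1}{5}\log_d n$ as the depth up to which we require tree-likeness of edges. By Proposition \ref{prop:geometry-rrg}(3), on this scale the fraction of edges failing to be in ${\rm LTLE}(\ell_n)$ is $\whp$ of order $o(n)/m = o(1)$; we will restrict attention to this high-probability event. Meanwhile, Lemma \ref{lemma:general_graph} applied with $T = t_n$ bounds every $e\in {\rm LTLE}(\ell_n)$ by
$$\bigl|\Prob_u(e\in D_{t_n})-2u(1-u)f_d(t_n)\bigr|\le \frac{4t_n}{\ell_n}=\frac{20 t_n\log d}{\log n}=o(1),$$
where the last estimate is exactly the hypothesis $t_n/\log n\to 0$. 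Splitting the average in the display above into its LTLE and non-LTLE parts, using this uniform bound on the former and the trivial bound $\Prob_u(e\in D_{t_n})\le 1$ combined with $|{\rm LTLE}(\ell_n)^{c}|/m = o(1)$ on the latter, will give \eqref{eq:prop_mean_linear_time}.

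The only genuine subtlety is to match the two regimes: the depth $\ell_n$ must be \emph{large enough} for Proposition \ref{prop:geometry-rrg}(3) to apply and yet \emph{small enough} (relative to $t_n$) that the error $4t_n/\ell_n$ in Lemma \ref{lemma:general_graph} still vanishes. Under the hypothesis $t_n=o(\log n)$, both requirements are met by the single choice $\ell_n=\tfrac{1}{5}\log_d n$, so the argument is essentially a clean bookkeeping step once one is armed with the two preceding results. No further technique is needed at this scale; the more delicate regime $t_n = \Theta(\log n)$, in which random walks start to feel the global structure of $G_{d,n}(\omega)$ and the naive coupling breaks down, is deferred to Section \ref{subsec:2}.
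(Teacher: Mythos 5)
Your proposal is correct and follows essentially the same route as the paper's proof: write $\Expect_u[\cD_{t_n}]$ as the edge-average of $\Prob_u(e\in D_{t_n})$, apply Lemma \ref{lemma:general_graph} with $T=t_n$ and depth $\ell_n=\tfrac{1}{5}\log_d n$ on the ${\rm LTLE}(\ell_n)$ edges (where $4t_n/\ell_n=o(1)$ by the hypothesis $t_n=o(\log n)$), and control the remaining edges via Proposition \ref{prop:geometry-rrg}(iii) and the trivial bound. No gaps; the bookkeeping matches the paper's argument.
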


\begin{proof}
	Note that for every realisation of $G$,
	\begin{equation*}
		\Expect_u[|D_{t_n}|]=\sum_{e\in E} \Prob_u(e\in D^n_{t_n}).
	\end{equation*}
	Call $E_\star\subset E$ the set of ${\rm LTLE}\big(\frac{1}{5}\log_d n\big)$ edges. Then, by Lemma \ref{lemma:general_graph} and our assumption on $t_n$,
	\begin{equation*}
		\begin{split}
			\Big|\Expect_u[\cD_{t_n}] - 2u(1-u)f_d(t_{n}) \big| & \leq \Big| \frac{1}{m}\sum_{e \in E} \Prob_{u}\big( e \in D_{t} \big) - 2u(1-u)f_d(t_{n}) \Big| \\
			& \leq \frac{1}{m} \sum_{e \in E_{\star}} \frac{4t_{n}}{\frac{1}{5}\log_{d} n}+ \frac{1}{m}|E_{\star}^{c}| \overset{\P}{\longrightarrow} 0,
		\end{split}
	\end{equation*}
	where the last convergence follows from Proposition~\ref{prop:geometry-rrg}(iii).
\end{proof}


\nc{c:bound_meeting_time}

\subsection{Expectation for large times}
\label{subsec:2}

In this section we show how the behaviour in Proposition \ref{prop:mean_linear_time} can be extended up to the linear time scale. Clearly, we need a different argument, since we can no longer assume that the dual random walks starting at the extremes of a locally tree-like edge do not exit their neighbourhood of size $\tfrac{1}{5}\log_d(n)$. In order to proceed, we adapt the arguments in \cite[Lemmas 17 and 20]{CFRmultiple} to our framework, which will constitute a crucial tool for our analysis. In what follows, we fix 
\begin{equation}\label{sigma}
	\sigma_n = \lceil \log\log n \rceil^{2}.
\end{equation}
The following lemma says that $\whp$ for every pair of starting vertices that are more than $\sigma_n$ apart the probability that the two random walks meet before time $\log^3 n$ is exponentially small in $\sigma_n$.

\begin{lemma}[Meeting time of distant random walks]
	\label{lemma:far_particles}
	Let $G_{d,n}(\omega)$ be a regular random graph. Then there exists a constant $\uc{c:bound_meeting_time}>0$ such that 
	\begin{equation*}
		\ind \left\{\max_{x,y\colon\, {\rm dist}(x,y)>\sigma_n} \Prob\big(\tau_{\rm meet}^{x,y} \leq \log^3 n\big) 
		\leq \ee^{-\uc{c:bound_meeting_time} \sigma_n}\right\} \overset{\P}{\longrightarrow} 1.
	\end{equation*}
\end{lemma}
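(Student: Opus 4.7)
The plan is to control $\Prob(\tau_{\rm meet}^{x,y} \leq \log^3 n)$ via a first-moment bound on the number of meetings at jump times, combined with a spectral estimate on the random walk kernel of $G_{d,n}(\omega)$. Since the pair $(X_s,Y_s)$ is constant between jumps, meetings can only occur at jump instants; the total number of such instants in $[0,t]$ is a Poisson variable $N(t)$ of mean $2t$, independent of the embedded discrete-time pair chain $(X_k^{\rm dt},Y_k^{\rm dt})$ of Remark~\ref{remark:dt}. A first-moment bound then yields
\begin{equation*}
	\Prob\big(\tau_{\rm meet}^{x,y} \leq t\big) \leq \Expect\!\left[\sum_{k=1}^{N(t)} \ind\{X_k^{\rm dt} = Y_k^{\rm dt}\}\right] = \sum_{k \geq 1} \Prob\big(N(t) \geq k\big)\,\Prob^{\rm dt}\big(X_k^{\rm dt} = Y_k^{\rm dt}\big).
\end{equation*}

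The next step is the identity $\Prob^{\rm dt}(X_k^{\rm dt} = Y_k^{\rm dt}) = P^k(x,y)$, where $P^k$ is the $k$-step transition kernel of a single discrete-time random walk on $G$. Indeed, letting $H_k \sim {\rm Binomial}(k,1/2)$ be the number of rounds by time $k$ in which $X$ has been selected to move, the two walks are conditionally independent given $H_k$, with respective laws $P^{H_k}(x,\cdot)$ and $P^{k-H_k}(y,\cdot)$. The transition matrix $P$ is symmetric (as $G$ is regular and undirected), so $\sum_z P^{H_k}(x,z)P^{k-H_k}(y,z) = P^k(x,y)$, independently of $H_k$. Moreover, since ${\rm dist}(x,y)>\sigma_n$, we have $P^k(x,y)=0$ for all $k<\sigma_n$.

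For the spectral input I will invoke that, $\whp$, the second-largest eigenvalue in absolute value of $P$ on $G_{d,n}(\omega)$ satisfies $\max(\lambda_2,|\lambda_n|) \leq \lambda_0 = \lambda_0(d) < 1$, which is Friedman's theorem and is also what underlies the cutoff mixing estimate of Proposition~\ref{prop:geometry-rrg}(iv) via~\cite{LS}. The standard reversible bound with uniform stationary distribution then gives $P^k(x,y)\leq 1/n+\lambda_0^k$. Taking $t=\log^3 n$ and splitting the Poisson sum at $k=4\log^3 n$ (so that the tail contribution is $o(1/n)$),
\begin{equation*}
	\Prob\big(\tau_{\rm meet}^{x,y} \leq \log^3 n\big) \leq \sum_{k = \sigma_n}^{4\log^3 n} \Big(\frac{1}{n}+\lambda_0^k\Big) + o(1/n) \leq \frac{4\log^3 n}{n} + \frac{\lambda_0^{\sigma_n}}{1-\lambda_0} + o(1/n).
\end{equation*}
Since $\sigma_n=(\log\log n)^2 = o(\log n)$, both main terms are bounded by $\ee^{-c\sigma_n}$ for any $0<c<|\log\lambda_0|$ and $n$ large. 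The bound is uniform over all pairs $(x,y)$ with ${\rm dist}(x,y)>\sigma_n$, so no union bound over pairs is required.

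The main obstacle I foresee is the spectral input itself: Proposition~\ref{prop:geometry-rrg} only records total-variation mixing, whereas the argument above uses a pointwise bound on $P^k$ coming from a quantitative spectral gap. However, the estimate $\lambda_0<1$ on a $\whp$ event is a natural byproduct of the cutoff proof in~\cite{LS} (or can be quoted directly from Friedman's theorem), so this is a matter of citation rather than a substantial additional argument. As an alternative, one could replace the spectral bound on the intermediate regime $k \in [\sigma_n,C\log n]$ by a tree-coupling estimate (using Proposition~\ref{prop:geometry-rrg}(ii) and the gambler's ruin bound $P^k(x,y)\leq (1/(d-1))^{{\rm dist}(x,y)}$ on the tree), combined with the iterated mixing bound $d_{\rm TV}(k)\leq 1/n$ for $k\geq C \log n \log\log n$ to handle the later regime; the resulting bound is of the same order.
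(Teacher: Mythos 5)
Your argument is correct, and it takes a genuinely different route from the paper. The paper reduces to discrete time and then argues pathwise: on the $\whp$ event that every ball of radius $2\sigma_n$ has tree excess at most one (Proposition~\ref{prop:geometry-rrg}(ii)), it looks at the last time $\rho$ at which the two walks are at distance $\geq \sigma_n$, and shows that to meet, one walk must traverse half of the (at most two) connecting paths, which is compared via gambler's ruin to a biased walk on $\Z$, yielding a bound of order $(d-1)^{-\sigma_n/2}$ up to poly-logarithmic factors from union bounds over times; the tree-excess-one case requires a separate, somewhat delicate case analysis. You instead use a first-moment bound over the Poisson jump epochs together with the symmetry identity $\Prob^{\rm dt}(X^{\rm dt}_k=Y^{\rm dt}_k)=P^k(x,y)$ (valid since $P$ is symmetric for a regular graph and $N(t)$ is independent of the embedded pair chain of Remark~\ref{remark:dt}), the vanishing of $P^k(x,y)$ for $k\le\sigma_n$, and the reversible spectral estimate $P^k(x,y)\le \tfrac1n+\lambda_0^k$ with $\lambda_0=\lambda_0(d)<1$ from Friedman's theorem. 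This avoids the geometric case analysis entirely, gives a bound uniform over all admissible pairs with no union bound over times, and yields the required $\ee^{-\uc{c:bound_meeting_time}\sigma_n}$ decay because $\sigma_n=o(\log n)$ makes the additive $O(\log^3 n/n)$ term negligible. The trade-off is the external input: the paper's proof at this point uses only local tree-likeness and elementary gambler's ruin, whereas yours needs the quantitative $\whp$ spectral gap; however, the paper itself invokes \cite{friedman2008proof} in Step 2 of the proof of Proposition~\ref{lemma:firstvisitDelta}, so this is indeed a matter of citation (note only that you need the two-sided bound $\max(\lambda_2,|\lambda_n|)<1$, which Friedman's theorem does provide) rather than a new ingredient. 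Your closing alternative (tree coupling on $[\sigma_n, C\log n]$ plus iterated mixing) is sketchier as stated, since the walks may exit the tree-like ball and return, but it is not needed: the spectral argument is complete on its own.
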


\begin{proof}
	The proof comes in several steps.		
	
	\medskip\noindent
	{\bf 1.}
	Consider two independent random walks $(X_t)_{t \geq 0}$ and $(Y_t)_{t \geq 0}$ that start from $x$ and $y$, respectively. The number of steps each random walk performs before time $\log^3 n$ has distribution ${\rm Poisson}(\log^3n)$. In particular, Markov's inequality with exponential moments implies that, for $\lambda= \log\log n$,
	\begin{equation*}
		\begin{split}
			&\Prob\bigg(\begin{array}{c}
				X_t \text{ or } Y_t \text{ performs more than}
				\log^4 n \text{ steps before time } \log^3 n
			\end{array}\bigg)\\
			&\leq 2 \ee^{-\lambda \log^4 n} \exp\big\{(\ee^{\lambda}-1)\log^3 n\big\}
			= 2\exp\big\{(\ee^{\lambda}-1)\log^3 n - \lambda \log^4 n\big\}\\
			&= 2\exp\big\{(\log n - 1 -\log\log n \log n)\log^3 n\big\}
			\leq \ee^{-\uc{c:bound_meeting_time} \sigma_n},
		\end{split}
	\end{equation*}
	provided $n$ is large enough. Therefore, we may consider the case where $X$ and $Y$ perform discrete-time random walks in which at every step one of the random walks is selected uniformly at random to move. We then just need to verify that $\whp$
	\begin{equation*}
		\Prob^{\rm dt}\big(\tau_{\rm meet}^{x,y} \leq 2\log^4 n\big) \leq \ee^{-\uc{c:bound_meeting_time} \sigma_n},
	\end{equation*}
	uniformly over the pairs $x$ and $y$ that are at least $\sigma_n$ apart, where $\Prob^{\rm dt}$ denotes the law of the two discrete-time random walks.
	
	\medskip\noindent
	{\bf 2.} 
	Let $Z_t$ denote the graph distance of $X_t$ and $Y_t$ at time $t$. Note that $Z_0 \geq \sigma_n$, and $Z_{t+1}-Z_{t} \in \{-1,0,1\}$ for every $t \geq 0$ (the case $Z_{t+1}-Z_t=0$ corresponds to jumps like the one in Figure~\ref{fig:jump_same_distance}). This implies that the random walks cannot meet before time $\sigma_n$. Recall from Proposition~\ref{prop:geometry-rrg} that $\whp$
	\begin{equation*}
		\tx(\mathfrak{B}_{v}(2\sigma_n)) \leq 1, \qquad \text{for all } v \in V.
	\end{equation*}
	
	\begin{figure}
		\begin{tikzpicture}
			\begin{scope}[rotate=-30]
				\draw(-0.25,0)--(2.25,0);
				\draw[rotate=60](-0.25,0)--(2.25,0);
				\draw[shift={(2,0)}, rotate=120](-0.25,0)--(2.25,0);
				\draw[->, blue, thick, out=-30, in=-150, shift={(2,0)}, rotate=120](0.05,-0.05) to (1.95,-0.05);
				\fill[black] (0,0) circle (0.05);
				\fill[black] (2,0) circle (0.05);
				\node[below] at (0,0){$X$};
				\node[below] at (1.8,0){$Y$};
			\end{scope}
		\end{tikzpicture}
		\caption{In a general $d$-regular graph the potential existence of cycles of length $3$ allows the distance between the two random walks to remain constant after $1$ step. In the picture, if $Y$ moves along the blue arrow, then it remains at distance $1$ to $X$. }
		\label{fig:cycle}
		\label{fig:jump_same_distance}
	\end{figure}
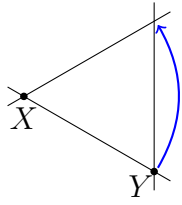
	
	\noindent		
	It is now enough to show that
	\begin{equation*}
		q_n = \Prob^{\rm dt} \Big(\tau_{\rm meet}^{x,y}\in \big\{\sigma_n,\dots,2\log^4 n\big\}\Big) \leq 4\log^8n \ee^{-2\uc{c:bound_meeting_time} \sigma_n} \leq \ee^{-\uc{c:bound_meeting_time}\sigma_n},
	\end{equation*}
	for some positive $\uc{c:bound_meeting_time}$ and all $n$ large enough. Put $\rho= \max\{t \leq \tau_{\rm meet}^{x,y}\colon Z_t \geq \sigma_n \}$. We rewrite the probability of interest as
	\begin{equation*}
		q_n \leq 2 \log^4 n \max_{r \leq 2\log^4 n}\Prob^{\rm dt} \big( \tau_{\rm meet}^{x,y} \leq 2\log^4 n, \rho=r\big).
	\end{equation*}
	We aim at providing bounds on the above probability that are uniform in $r \leq 2\log^4 n$. To that end, consider the ball of radius $2\sigma_n$ around $X_\rho$, which by assumption satisfies
	\begin{equation*}
		\tx(\mathfrak{B}_{X_{\rho}}(2\sigma_n)) \leq 1.
	\end{equation*}
	We distinguish between two cases: (1) the quantity above is $0$ (and thus $\mathfrak{B}_{X_{\rho}}(2\sigma_n)$ is a tree); (2) the tree excess of $\mathfrak{B}_{X_{\rho}}(2\sigma_n)$ is $1$.
	
	\medskip\noindent
	{\bf 3.}		
	First, consider the case when $\mathfrak{B}_{X_{\rho}}(2\sigma_n)$ is a tree. Call $\p$ the unique path joining $X_\rho$ and $Y_\rho$ within the tree, and recall that $|\p| = {\rm dist}(X_\rho,Y_\rho)=\sigma_n$. Let us argue that at least one of the random walks does $\sigma_n/2$ steps along $\p$. Indeed, suppose first that for all $t \in \{\rho, \dots, \tau_{\rm meet}^{x,y} \}$ neither $X_t$ nor $Y_t$ leaves the tree $\mathfrak{B}_{X_{\rho}}(2\sigma_n)$. In order for the two random walks to meet, they need to jointly traverse the path $\p$. Therefore, at least one of the two walks has to traverse half of the path $\mathfrak{p}$. If, on the other hand, one of the two random walks leaves $\mathfrak{B}_{X_{\rho}}(2\sigma_n)$, then the claim is trivial: indeed, the other walk must go through the path $\mathfrak{p}$ in order to have consistency with the event $\{\rho=r\}$. Here, note that when we say that the first/second random walk must have made $\sigma_n/2$ steps along $\p$, we do not require that these steps are made consecutively. Nonetheless, under $\{\rho=r\}$, requiring that at least one of the two random walks makes $\sigma_n/2$ steps along $\p$ within time $2 \log^4 n$ is equivalent to requiring that
	\begin{equation*}
		\hat{\tau}_{r} = \inf\{u \geq r\colon\, {\rm dist}(X_u,Y_r) \wedge {\rm dist}(X_r,Y_u) \leq \tfrac{\sigma_n}{2}\}
	\end{equation*}
	satisfies $\hat{\tau}_{r} \leq 2 \log^4 n$. (Note that because $\hat{\tau}_{r}$ is smaller than the exit time of $\cB_{X_{\rho}}(2\sigma_n)$, we can couple the two random walks with the random walks in $\mathcal{T}_d$.) We can now bound, uniformly in $r \leq 2 \log^4 n$,
	\begin{equation*}
		\Prob^{\rm dt}\big(\tau_{\rm meet}^{x,y}\le 2 \log^{4} n, \rho=r\big) 
		\leq \Prob^{\rm dt}\big(\hat{\tau}_r \leq 2 \log^4 n, \rho=r\big) 
		\leq 2 \Pr\big(\tau_{\rm abs} \leq 2 \log^4 n\big),
	\end{equation*}
	where $\tau_{\rm abs}$ is the absorption time at the origin of a biased random walk on $\Z$, starting at $\sigma_n/2$ and stepping to the right with probability $\frac{d-1}{d}$ and to the left with probability $\frac{1}{d}$. Hence, by the classical Gambler's ruin argument,
	\begin{equation}
		\label{eq:gambler_abs}
		\Pr\big(\tau_{\rm abs} \leq 2 \log^4 n\big) \leq \Pr\big(\tau_{\rm abs} < \infty\big)
		= \Big(\frac{1}{d-1}\Big)^{\sigma_n/2} \leq 2^{-\frac{\sigma_n}{2}}.
	\end{equation} 
	The claim follows by choosing $\uc{c:bound_meeting_time} \leq \frac{\log 2}{5}$.
	
	\begin{figure}[htbp]
		\begin{tikzpicture}
			\draw (-1,0) -- (0,3)--(1,0)--(-1,0);
			\fill[black] (-0.5,1.5) circle (0.05);
			\fill[black] (0,3) circle (0.05);
			\draw[thick, blue, dash pattern= on 3pt off 5pt] (-0.5,1.5) -- (0,3);
			\draw[thick, red, dash pattern= on 3pt off 5pt, dash phase=4pt] (-0.5,1.5) -- (0,3);
			\node[above] at (0,3){$X_{\rho}$};
			\node[left] at (-0.5,1.5){$Y_{\rho}$};
			
			\begin{scope}[shift={(3,0)}]
				\draw (-1,0) -- (0,3)--(1,0)--(-1,0);
				\fill[black] (-0.5,1.5) circle (0.05);
				\fill[black] (0,3) circle (0.05);
				\draw[thick, red] (-0.5,1.5) -- (0,3);
				\draw[thick, blue] (-0.5,1.5) -- (-0.3, 1.5) -- (0,3);
				\node[above] at (0,3){$X_{\rho}$};
				\node[left] at (-0.5,1.5){$Y_{\rho}$};
			\end{scope}
			
			\begin{scope}[shift={(6,0)}]
				\draw (-1,0) -- (0,3)--(1,0)--(-1,0);
				\fill[black] (-0.5,1.5) circle (0.05);
				\fill[black] (0,3) circle (0.05);
				\draw[thick, red] (-0.5,1.5) -- (0,3);
				\draw[thick, blue] (-0.5,1.5) -- (-0.25, 0.75) -- (0.05, 0.75) -- (0,3);
				\node[above] at (0,3){$X_{\rho}$};
				\node[left] at (-0.5,1.5){$Y_{\rho}$};
			\end{scope}
			
			\begin{scope}[shift={(9,0)}]
				\draw (-1,0) -- (0,3)--(1,0)--(-1,0);
				\fill[black] (-0.5,1.5) circle (0.05);
				\fill[black] (0,3) circle (0.05);
				\draw[thick, red] (-0.25,2.25) -- (0,3);
				\draw[thick, blue] (-0.25, 2.25) -- (0.05, 2.25)-- (0,3);
				\draw[thick, blue, dash pattern= on 3pt off 5pt] (-0.5,1.5)--(-0.25,2.25);
				\draw[thick, red, dash pattern= on 3pt off 5pt, dash phase=4pt] (-0.5,1.5)--(-0.25,2.25);
				\node[above] at (0,3){$X_{\rho}$};
				\node[left] at (-0.5,1.5){$Y_{\rho}$};
				\node[left] at (-0.25,2.25){$z_{\kappa}$};
			\end{scope}
			
		\end{tikzpicture}
		\vspace{0.2cm}
		\caption{A sketch of the $4$ possible ways the two paths $\p$ and $\tilde\p$ described below can look like. In the first picture, $\kappa=\sigma_n$. In the second and third picture, $\kappa=0$. In the last picture, $\kappa$ is a non trivial value in the interval $(0,\sigma_n)$.
		}
		\label{fig:tree-like_neighborhoods}
	\end{figure}
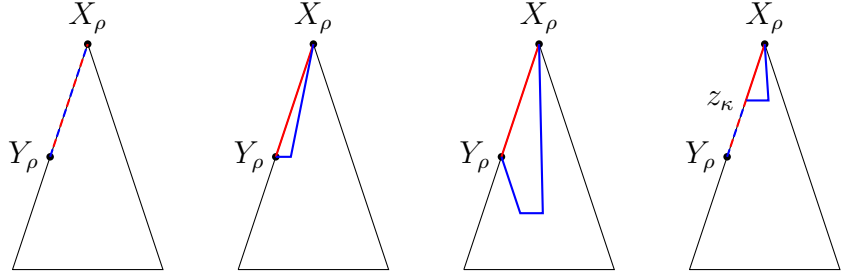
	
	\begin{figure}[htbp]
		\begin{tikzpicture}
			\draw(1,2)--(3,3)--(3,2);
			\draw(3,3)--(5,2);
			\draw(0.5,1)--(1,2)--(1.5,1);
			\draw(2.5,1)--(3,2)--(3.5,1);
			\draw(4.5,1)--(5,2)--(5.5,1);
			\foreach \x in {0.5, 1.5, 2.5, 3.5, 4.5, 5.5}{
				\draw(\x-0.25,0)--(\x,1)--(\x+0.25,0);
			};
			\draw (0.5,1)--(0.75,0);
			\node[left] at (0.25,0){$Y_{\rho}$};
			\node[above] at (3,3){$X_{\rho}$};
			\draw[thick, red] (0.25,0)--(0.5,1) -- (1,2) -- (3,3);
			\draw[thick, blue] (2.25,0)--(2.5,1) -- (3,2) -- (3,3);
			\draw[thick, blue, out=-20, in=-160] (0.25,0) to (2.25,0);
			
			\begin{scope}[shift={(7,0)}]
				\draw(1,2)--(3,3)--(3,2);
				\draw(3,3)--(5,2);
				\draw(0.5,1)--(1,2)--(1.5,1);
				\draw(2.5,1)--(3,2)--(3.5,1);
				\draw(4.5,1)--(5,2)--(5.5,1);
				\foreach \x in {0.5, 1.5, 3.5, 4.5, 5.5}{
					\draw(\x-0.25,0)--(\x,1)--(\x+0.25,0);
				};
				\draw(2.5,1)--(2.75,0);
				\draw (0.5,1)--(0.75,0);
				\draw[thick, red] (0.25,0)--(0.5,1) -- (1,2) -- (3,3);
				\draw[thick, blue] (2.5,1) -- (3,2) -- (3,3);
				\draw[thick, blue, out=-45, in=-80] (0.25,0) to (2.5,1);
				\node[left] at (0.25,0){$Y_{\rho}$};
				\node[above] at (3,3){$X_{\rho}$};
			\end{scope}
			
		\end{tikzpicture}
		\caption{On the left $|\tilde\p|=|\p|+1$, on the right $|\tilde\p|=|\p|$. }
		\label{fig:how_it_goes_wrong}
	\end{figure}
	
	\medskip\noindent		
	{\bf 4.}
	Next, consider the case when $\tx( \mathfrak{B}_{X_{\rho}}(2\sigma_n) )=1$. Then there are at most two paths joining $Y_{\rho}$ to $X_{\rho}$ in $\mathfrak{B}_{X_{\rho}}(2\sigma_n)$ (see Figures~\ref{fig:tree-like_neighborhoods}--\ref{fig:how_it_goes_wrong}). Call these two paths $\p = (z_{0} = Y_{\rho}, z_{1}, \dots, z_{\sigma_n} = X_{\rho})$ and $\tilde{\p} = (\tilde{z}_{0} = Y_{\rho}, \tilde{z}_{1}, \dots, \tilde{z}_{\ell} = X_{\rho})$, with $\sigma_n \leq \ell \leq 3 \sigma_n+1$ (the case $\ell=\sigma_n$ is exemplified in Figure~\ref{fig:how_it_goes_wrong}). Let $\kappa$ denote the index of the last vertex such that the paths coincide, i.e.,
	\begin{equation*}
		\kappa = \sup \big\{i \leq \sigma_n\colon\, z_{i} = \tilde{z}_{i} \big\}.
	\end{equation*}
	We split the two paths as $\p = (\p_{\kappa}, \p')$ and $\tilde{\p} = (\p_{\kappa}, \p'')$, where $\p_{\kappa} = (z_{0}, z_{1}, \dots, z_{\kappa})$. If $\kappa \geq \frac{1}{2}\sigma_n$, then the argument from the previous paragraph can be easily adapted as follows. We first note that, in order for the random walks to meet, either $Y$ or $X$ must reach $z_{\frac{1}{2}\kappa}$. If $Y$ is the random walk that reaches this vertex, then the comparison with the biased random walk is done in the same way as before. If the random walk that reaches $z_{\frac{1}{2}\kappa}$ is $X$, then we still need to account for the time it takes $X$ to arrive at $z_{\kappa}$, which can be taken care of via a union bound, yielding an additional factor $2\log^4 n$ in  the calculation. More precisely, consider the random variables
	\begin{equation*}
		\tilde{\tau}_{r}^{1} = \inf\{u \geq r\colon\, X_{u}=z_{\frac{1}{2}\kappa}\}, \qquad 
		\tilde{\tau}_{r}^{2} = \inf\{u \geq r\colon\,Y_{u}=z_{\frac{1}{2}\kappa}\}.
	\end{equation*}
	We can bound
	\begin{equation*}
		\begin{split}
			\Prob^{\rm dt}\big(\tau_{\rm meet}^{x,y} \leq 2\log^4 n, \rho=r\big) 
			&\leq \Prob^{\rm dt} \Big(\tilde{\tau}_{r}^{1} \wedge \tilde{\tau}_{r}^{2} \leq 2\log^{4} n, \rho=r\Big) \\
			&\leq \Prob^{\rm dt} \Big(\tilde{\tau}_{r}^{1} \leq 2\log^4 n, \rho=r\Big) 
			+ \Prob^{\rm dt} \Big( \tilde{\tau}_{r}^{2} \leq 2\log^4 n, \rho=r\Big) \\
			&\leq \Pr\big(\tau_{\rm abs} \leq 2 \log^4 n\big) 
			+ \sum_{i=1}^{2\log^4 n} \Pr\big(\tau_{\rm abs} \leq \log^4 n-i\big) \\
			&\leq (\log^4 n+1) 2^{-\frac{\sigma_n}{4}}. 
		\end{split}
	\end{equation*}
	For the bound in the second line, the first term follows an analogous reasoning as in the tree case. To handle the second term, we first apply union bounds on the first time the random walk $X$ reaches $z_{\kappa}$, and the result follows from the strong Markov property and the same argument as in the tree case. Conversely, if $\kappa \leq \frac{1}{2}\sigma_n$, then both $\p'$ and $\p''$ are long paths (of size at least $\frac{1}{2}\sigma_n$). Once again, in order for the random walks to meet, at least one of them must traverse at least half of one of the paths $\p'$ or $\p''$ (because after the removal of $z_k$ the connected component of $X_{\rho}$ is a tree). The argument follows the same lines as in the previous case, and we refrain from spelling out the details. 
\end{proof}

\begin{proposition}[Exponential scaling of meeting time starting in stationarity]
	\label{lemma:firstvisitDelta}
	Let $G_{d,n}(\omega)$ be a regular random graph. There exists a sequence of random variables $\Theta_d=(\Theta_d^{(n)})_{n\in\N}$ such that
	\begin{equation}
		\label{eq:lemmaFVTLdelta_ct}
		\sup_{t \geq 0} \left|\frac{\Prob(\tau_{\rm meet}^{\pi\otimes\pi}>t)}{\exp\left(-\frac{2\:\Theta_d^{(n)} \: t}{n}\right)} -1 \right| 
		\overset{\P}{\longrightarrow}0\:,
	\end{equation}
	and
	\begin{equation*}
		\Theta_d^{(n)} \overset{\P}{\longrightarrow} \theta_d^{(n)} = \frac{d-2}{d-1}.
	\end{equation*}
\end{proposition}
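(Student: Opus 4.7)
The plan is to express the meeting time as the hitting time of a single state in an auxiliary chain obtained by collapsing the diagonal of the product chain, and then invoke the First Visit Time Lemma (Theorem~\ref{t:fvtl}). By Remark~\ref{remark:dt}, I would first prove the discrete-time analogue and then Poissonise.

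Setup: let $(X_k,Y_k)_{k\ge 0}$ be the discrete-time product chain on $V\times V$ in which at each step one of the two walks is selected uniformly and performs one simple-random-walk step on $G_{d,n}(\omega)$; its stationary distribution is uniform on $V^2$. Collapse the diagonal $\Delta=\{(x,x):x\in V\}$ to a single state $\ast$, producing an auxiliary chain $\tilde Q$ on $\tilde\Omega=(V^2\setminus\Delta)\cup\{\ast\}$ (with the transitions out of $\ast$ defined by the original chain's behaviour when the diagonal representative is picked uniformly). Meeting of the original chain coincides with hitting $\ast$ in $\tilde Q$, and the induced stationary distribution is $\tilde\mu(\ast)=1/n$ and $\tilde\mu(x,y)=1/n^2$ off the diagonal. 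To verify the hypotheses of Theorem~\ref{t:fvtl}: condition (1) holds since $|\tilde\Omega|^2\min\tilde\mu\asymp n^2$; condition (2) holds since the mixing time of $\tilde Q$ is dominated by that of the product chain, which is $O(\log n)$ whp by Proposition~\ref{prop:geometry-rrg}(iv), while $\max\tilde\mu=1/n$; condition (3) follows from whp connectedness of $G_{d,n}(\omega)$ for $d\ge 3$, which implies that any two off-diagonal pairs can be connected by a path avoiding $\Delta$.

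The technical heart is to show $R(\ast)=\sum_{s=0}^{T}\tilde Q^s(\ast,\ast)\overset{\P}{\longrightarrow}1/\theta_d=(d-1)/(d-2)$ for $T=\log^3 n$. Starting from $\ast$, one step brings the walks to distance one; conditioning on the starting vertex belonging to ${\rm LTL}(\tfrac{1}{5}\log_d n)$ (which holds for all but $o(n)$ vertices by Proposition~\ref{prop:geometry-rrg}), the distance process coincides with the biased walk of Lemma~\ref{lemma:gambler} until the walks exit the tree ball. On the infinite tree, the Gambler's ruin calculation yields that the expected number of returns to $0$ equals $\sum_{k\ge 1}(d-1)^{-k}=1/(d-2)$, so the candidate limit of $R(\ast)$ is $1+1/(d-2)=1/\theta_d$. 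I would split the sum at $T_0=\sigma_n$: for $s\le T_0$, the tree coupling together with tightness of the last return time of a positively-biased walk gives convergence to $1/\theta_d$; for $T_0<s\le T$, the distance between the walks exceeds $\sigma_n$ at time $T_0$ with probability $1-o(1)$ by the drift, after which Lemma~\ref{lemma:far_particles} bounds the remaining contribution by $Te^{-\uc{c:bound_meeting_time}\sigma_n}=o(1)$.

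Theorem~\ref{t:fvtl} then produces $\lambda^{(n)}_\ast$ with $n\lambda^{(n)}_\ast\overset{\P}{\longrightarrow}\theta_d$ and
\[
\sup_{k\ge 0}\Bigl|\frac{\Prob^{\rm dt}_{\pi\otimes\pi}(\tau_{\rm meet}>k)}{(1-\lambda^{(n)}_\ast)^k}-1\Bigr|\overset{\P}{\longrightarrow}0.
\]
Poissonising with the Poisson$(2t)$ clock that drives the continuous-time chain and using the identity $\Expect[(1-\lambda)^M]=\exp(-2t\lambda)$ for $M\sim\Poisson(2t)$ transfers this geometric tail to the uniform exponential tail in \eqref{eq:lemmaFVTLdelta_ct}. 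Setting $\Theta_d^{(n)}:=n\lambda^{(n)}_\ast$ then delivers both assertions. The main obstacle I expect is the sharp evaluation of $R(\ast)$: stitching together the tree-regime contribution (which attains $1/\theta_d$ only in the $T\to\infty$ limit on the infinite tree) and the post-separation regime (controlled by Lemma~\ref{lemma:far_particles}) with $o(1)$ total error requires a careful choice of the intermediate scale $T_0=\sigma_n$ and uniform control of both random walks.
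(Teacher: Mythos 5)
Your overall route is the same as the paper's: pass to the asynchronous discrete-time product chain, collapse the diagonal into a single target state (the paper's multigraph $\Gamma$ with vertex $\Delta$), verify the hypotheses of the First Visit Time Lemma, show that the expected number of returns satisfies $R(\Delta)\to 1/\theta_d=(d-1)/(d-2)$ via the tree coupling and Lemma~\ref{lemma:far_particles}, and then Poissonise with the rate-$2$ clock. Two of your steps, however, do not hold as stated. First, the claim that at time $T_0=\sigma_n$ the distance between the two walks exceeds $\sigma_n$ with probability $1-o(1)$ ``by the drift'' is false: in the asynchronous chain the distance changes by at most one per step, so after $\sigma_n$ steps from distance one it can never exceed $\sigma_n$ (it is typically of order $\tfrac{d-2}{d}\sigma_n$, and the pair may moreover have returned to the diagonal and restarted before $T_0$). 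The repair is what the paper does: control the stopping time at which the distance first reaches $\sigma_n$ (or use a deterministic scale such as $\sigma_n^3$, still well inside the tree-like ball), via the gambler's-ruin bound $\Exp_1(\tau_0\wedge\tau_{\sigma_n})\le d\sigma_n/(d-2)$ and Markov's inequality as in \eqref{eq:bound_probability}, and only then invoke Lemma~\ref{lemma:far_particles} through the strong Markov property to exclude returns to $\Delta$ in $(T_0,T]$; your choice $T_0=\sigma_n$ cannot be made to work.

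Second, your verification of condition (2) of Theorem~\ref{t:fvtl} is unsupported. Proposition~\ref{prop:geometry-rrg}(iv) gives $\varepsilon$-mixing of a \emph{single} walk, whereas \eqref{eq:tmix} requires mixing of the collapsed chain to precision $N^{-3}$ with $N=n^2-n+1$, and ``the mixing time of $\tilde Q$ is dominated by that of the product chain'' is not a standard fact: $\tilde Q$ is a genuinely different chain (its transitions out of $\ast$ differ from the product chain's transitions at diagonal states), and collapsing states does not in general transfer mixing-time bounds. The paper fills this gap spectrally: Friedman's theorem gives a spectral gap for $P$ bounded away from zero whp, hence for the product chain $\tfrac12(P\otimes I+I\otimes P)$, and then monotonicity of the conductance under contraction of vertices combined with Cheeger's inequality yields a gap for the chain on $\Gamma$ and $T_{\rm mix}\le\log^2 n$. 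Apart from these two points (and the routine uniformity check when transferring the geometric tail to the exponential one under the $\mathrm{Poisson}(2t)$ clock), your outline—the renewal heuristic $R(\Delta)=1+\sum_{k\ge1}(d-1)^{-k}=1/\theta_d$, the tree coupling on locally tree-like edges, and Lemma~\ref{lemma:far_particles} to kill late returns—coincides with the paper's argument.
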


\begin{proof}
	We show that for the discrete-time version of the process, in which at each time step one randomly chosen walk performs a jump, we have
	\begin{equation}
		\label{eq:lemmaFVTLdelta}
		\sup_{t \geq 0} \left|\frac{\Prob^{\rm dt}(\tau_{\rm meet}^{\pi\otimes\pi}>t)}{\left(1-\frac{\:\Theta_d}{n}\right)^t} -1\right| 
		\overset{\P}{\longrightarrow}0.
	\end{equation}
	The claim in \eqref{eq:lemmaFVTLdelta_ct} follows by Poissonisation. For the discrete-time process we can exploit the full power of the FVTL in Theorem \ref{t:fvtl}. The proof comes in several steps.
	
	\medskip\noindent
	{\bf 1.}
	Consider the multi-graph $\Gamma$ with vertex set $V_\Gamma= \{(x,y)\colon\, x,y\in[n],\: x\neq y\} \cup \{\Delta\}$, where $\Delta$ is the merge of the vertices on the diagonal that retains the edges, i.e., the edge set $E_\Gamma$ is such that
	\begin{equation*}
		((x,y),(v,w)) \in E_\Gamma \text{ if and only if } 
		\begin{cases}
			x=v \text{ and } (y,w)\in E,\\
			y=w \text{ and } (x,v)\in E,
		\end{cases}
	\end{equation*}
	and
	\begin{equation*}
		(\Delta ,(v,w)) \in E_\Gamma \text{ if and only if } (v,w)\in E,
	\end{equation*}
	with each edge adjacent to $\Delta$ having multiplicity $2$. Roughly, $\Gamma$ is such that every vertex except $\Delta$ has degree $2d$, while $\Delta$ has degree $4m=2dn$. It follows that the sum of the degrees in $\Gamma$ equals $2m_\Gamma=2d n(n-1)+ 2dn=2dn^2$, and that the number of vertices is given by $N=n^2-n+1$. Throughout the proof, $\Pr$ denotes the distribution of the simple random walk on $\Gamma$, and $P_\Gamma$ denotes the associated transition matrix. The stationary distribution $\pi_{\Gamma}$ of this random walk is given by 
	\begin{equation*}
		\pi_\Gamma(\Delta)=\frac{1}{n},
	\end{equation*}
	while, for all $(x,y)$ with $x\neq y$,
	\begin{equation*}
		\pi_\Gamma(x,y)=\frac{1}{n^2}.
	\end{equation*}
	To understand the reason underlying the definition of the multi-graph $\Gamma$, note that
	\begin{equation}\label{eq:identity}
		\mathbf{P}^{\rm dt}(\tau_{\rm meet}^{\pi\otimes\pi}>t)=\Pr(\tau_{\Delta}^{\pi_{\Gamma}}>t), \quad t \geq 0.
	\end{equation}
	\medskip\noindent
	{\bf 2.}	
	Concerning the mixing time in \eqref{eq:tmix}, it is not hard to show that we can choose $T_{\rm mix}\le \log^2 n$. For completeness, we provide a full proof by adapting the proof of \cite[Lemma 12]{CFRmultiple} to our asynchronous setting. As shown in \cite{friedman2008proof}, the second largest eigenvalue of $P$, denoted by $\lambda_\star$, is bounded away from $1$ $\whp$, from which it follows via \cite[Corollary 12.13]{LevPer:AMS2017} that the same holds for the matrix $P_{\otimes 2}=\frac{1}{2}(P\otimes I+I\otimes P)$. Write $\lambda_{\otimes 2}$ (respectively, $\lambda_\Gamma$) to denote the spectral gap of the matrix $P_{\otimes 2}$ (respectively, $P_\Gamma$). By \cite[Theorem 13.10]{LevPer:AMS2017} and with the abbreviation
	\begin{equation}
		\label{eq:conductance}
		\Phi_{\otimes 2} = \min_{ S \subset V^2 \::\: \pi_{\otimes 2}(S) \leq \frac{1}{2}} 
		\frac{\sum_{\x \in S}\pi_{\otimes 2}(\x)\sum_{\y \in S^c}P_{\otimes 2}(\x,\y)}{\pi_{\otimes 2}(S)}
	\end{equation}
	for the \emph{conductance} of the product chain, we know that
	\begin{equation}
		\label{eq:cheeger}
		\tfrac12\Phi_{\otimes 2}^2\le
		1-\lambda_{\otimes 2}\le 2\Phi_{\otimes 2}.
	\end{equation}
	Therefore $\Phi_{\otimes 2}$ is bounded away from $0$ \whp. By the construction of $\Gamma$, we must have $\Phi_{\otimes 2}\le \Phi_\Gamma$, where the latter is the conductance of the Markov chain $P_\Gamma$. Indeed, the graph $\Gamma$ is constructed from $G\times G$ by contracting some vertices and retaining the edges. Moreover, again by \eqref{eq:cheeger} to the Markov chain $P_\Gamma$,
	\begin{equation}
		{1-\lambda_{\Gamma}}\ge \tfrac12\Phi_{\Gamma}^2\ge \tfrac12\Phi_{\otimes 2}^2>0.
	\end{equation}
	Hence, we deduce that $\lambda_\Gamma$ must be bounded away from $1$ \whp. Finally, by \cite[Theorem 12.4]{LevPer:AMS2017}, we conclude that $O(\log n)$ steps suffice to get \eqref{eq:tmix}.
	
	\medskip\noindent
	{\bf 3.}
	Thus, Assumptions (1) and (3) in Theorem \ref{t:fvtl} are satisfied $\whp$ with $N=n^2-n+1$, $Q_N=P_\Gamma$, $\mu_N=\pi_\Gamma$. Moreover, choosing $x=\Delta$ we  easily check the $\whp$ existence of a unique quasi-stationary distribution, thanks to the fact that $d\ge 3$, $G$ is connected, and the two random walks move asynchronously. In order to use the FVTL, we are left to compute the expected number of returns to $\Delta$ before time $T_{\rm mix} \leq \log^2 n$, starting at $\Delta$. For $t \geq 1$, put
	\begin{equation*}
		R_t(\Delta) = \sum_{s=0}^t \Pr\big(X_s=\Delta \mid X_0= \Delta\big).
	\end{equation*}
	We will show that, for $T=\log^2 n$,
	\begin{equation}
		\label{RofDelta}
		R_{T}(\Delta) \overset{\P}{\longrightarrow} 1+ \frac{1-\theta_d}{\theta_d} =\frac{1}{ \theta_d }= \frac{d-1}{d-2}.
	\end{equation}
	The term $1$ comes from the fact that the random walk starts at $\Delta$.
	
	\medskip\noindent
	{\bf 4.}	
	We start by giving the heuristics of the proof. Call $(x,y)$ the vertex of $\Gamma$ visited at time $1$. By Proposition~\ref{prop:geometry-rrg}(ii), the probability that the edge $e=(x,y) \in E$ is ${\rm LTLE}(\tfrac{1}{5}\log_{d} n\big)$ is $1-o(1)$. On this event, the probability that there will be a second visit to $\Delta$ before $T_{\rm mix}$ is upper bounded by $\frac{1}{d-1}$. A simple combination of these arguments yields an upper bound. As for the lower bound, the probability that the random walk on $\Gamma$ starting at $(x,y)$ reaches a vertex $(v,w)$ with ${\rm dist}_G(v,w) \geq \sigma_n$ before reaching $\Delta$ is $\theta_d+o(1)$. Once at $(v,w)$, the probability that the random walk visits $\Delta$ before $T_{\rm mix}$ is $o(1)$. Hence, up to $o(1)$ corrections, $R_{T_{\rm mix}}(\Delta)$ can be estimated by the expectation of a geometric random variable with parameter $\theta_d+o(1)$, representing the first excursion of the random walk that hits $(v,w)$ with ${\rm dist}_G(v,w) \geq \sigma_n$ before returning to $\Delta$.
	
	\medskip\noindent
	{\bf 5.}		
	With this heuristics in mind, we are ready to provide the proof of \eqref{RofDelta}. We will do so by first assuming that
	\begin{equation}
		\label{eq:missing1}
		\Pr\big(\tfrac{1}{5} \log_d n \leq \tau_{\Delta}^+ \leq T \mid X_0 = \Delta\big) \overset{\P}{\longrightarrow} 0
	\end{equation}
	and
	\begin{equation}
		\label{eq:missing2}
		\Pr\big(\tau_{\Delta}^+ \leq \tfrac{1}{5}\log_d n \mid X_0 = \Delta\big) 
		\overset{\P}{\longrightarrow} 1-\theta_d.
	\end{equation}
	Write
	\begin{equation}
		\label{eq:equality_R}
		\begin{split}
			R_{T}(\Delta) 
			&= 1 + \sum_{s=1}^{T} \Pr\big(X_s = \Delta \mid X_0 = \Delta\big) \\
			&= 1 + \sum_{r=1}^{T} \sum_{s=r}^{T} \Pr\big(X_s = \Delta, \tau_{\Delta}^+ = r \mid X_0 = \Delta\big) \\
			&= 1 + \sum_{r=1}^{T} \sum_{s=r}^{T} \Pr\big(X_s = \Delta \mid X_r = \Delta\big)  
			\Pr\big(\tau_{\Delta}^+ = r \mid X_0 = \Delta \big) \\
			&= 1 + \sum_{r=1}^{T} \Pr\big(\tau_{\Delta}^+ = r \mid X_0 = \Delta\big) 
			\sum_{s=0}^{T-r} \Pr\big(X_s = \Delta \mid X_0 = \Delta\big) \\
			&= 1 + \sum_{r=1}^{T} \Pr\big(\tau_{\Delta}^+ =r \mid X_0 = \Delta \big) R_{T-r}(\Delta).
		\end{split}
	\end{equation}
	To establish the upper bound in \eqref{RofDelta}, observe that \eqref{eq:equality_R} yields
	\begin{equation*}
		R_{T}(\Delta) \leq 1 + \Pr\big(\tau_{\Delta}^+ \leq T \mid X_0 = \Delta\big) R_{T}(\Delta),
	\end{equation*}
	which yields
	\begin{equation*}
		R_{T}(\Delta) \leq \Big(1- \Pr\big(\tau_{\Delta}^+ \leq T \mid X_0=\Delta\big)\Big)^{-1},
	\end{equation*}
	and the claim follows by combining \eqref{eq:missing1} and \eqref{eq:missing2}.
	
	\medskip\noindent
	{\bf 6.}	
	To establish the lower bound in \eqref{RofDelta}, observe that \eqref{eq:equality_R} yields
	\begin{equation*}
		\begin{split}
			R_{T}(\Delta) 
			& \geq 1 + \sum_{r=1}^{\tfrac{1}{5} \log_d n } \Pr\big(\tau_{\Delta}^+ = r \mid X_0 = \Delta\big) R_{T-r}(\Delta) \\
			& \geq 1 + \Pr\big(\tau_{\Delta}^+ \leq \tfrac{1}{5} \log_d n \mid X_0 = \Delta \big) R_{T - \tfrac{1}{5} \log_d n}(\Delta).
		\end{split}
	\end{equation*}
	By iterating the estimate above, we obtain
	\begin{equation}
		\label{eq:RofDeltaLB}
		R_{T}(\Delta) \geq \sum_{i=0}^{5T/\log_d n} \Pr\big(\tau_{\Delta}^+ \leq \tfrac{1}{5} \log_d n \mid X_0
		=\Delta \big)^i = \frac{1}{\theta_d} - o_\P(1),
	\end{equation}
	where the last equality follows from \eqref{eq:missing2} and uses the choice $T= \log^2 n$ made earlier. This concludes the verification of the hypotheses of Theorem \ref{t:fvtl}. 
	
	\medskip\noindent
	{\bf 7.}
	To conclude the proof, we still need to verify \eqref{eq:missing1} and \eqref{eq:missing2}. We start with~\eqref{eq:missing1}. By Proposition~\ref{prop:geometry-rrg}(iii), we have
	\begin{equation}
		\label{eq:good_first_jump}
		\Pr\big(X_1 \in \text{LTLE}\big( \tfrac{1}{5}\log_d n\big) \mid X_0 = \Delta\big) \overset{\P}{\longrightarrow}1.
	\end{equation}
	Indeed, by construction, the random walk on $\Gamma$ starting at $\Delta$ after a single step visits a $(x,y)\in E$ uniformly at random. Assume that $X_{1} = (x,y) \in {\rm LTLE} \big(\tfrac{1}{5} \log_d n\big)$, and let $\tilde{\tau}$ denote the first time at which the process on $\Gamma$ is found at some $(w,z)$ with ${\rm dist}_G(w,z)=\sigma_n$.  Then, up to time $\tau_{\Delta} \wedge \tilde{\tau}$, the random walk $(X_t)_{t \geq 1}$ can be coupled with a biased random walk on $\Z$ (given by the distance between the two vertices) that starts from $1$, jumps to the right (respectively, left) with probability $\tfrac{d-1}{d}$ (respectively, $\tfrac{1}{d}$). Call this random walk $(W_t)_{t\ge0}$, and let $\tau_j$ , $j\in\Z$, be its first hitting of $j$. Then a gambler's ruin argument yields
	\begin{equation*}
		\Exp_1\big( \tau_0 \wedge \tau_{\sigma_n}\big) \leq \frac{d\sigma_n}{d-2},
	\end{equation*}
	which implies
	\begin{equation}
		\label{eq:bound_probability}
		\Pr\big(\tau_0 \wedge \tau_{\sigma_n} > \tfrac{1}{5} \log_d n -1\mid X_0=1 \big) 
		\leq \Pr \big(\tau_0 \wedge \tau_{\sigma_n} > \sigma_n^3\mid X_0=1\big) \leq \frac{1}{\sigma_n}
	\end{equation}
	for all $n$ sufficiently large. Compute
	\begin{equation*}
		\begin{split}
			&\Pr\big(\tau_{\Delta}^+ \wedge \tilde{\tau} \leq \tfrac{1}{5} \log_d n \mid X_0 = \Delta\big) \\
			&\:\:\:\geq \sum_{(x,y) \in (V_\Gamma \setminus \{\Delta\}) \cap {\rm LTLE}\big(\tfrac{1}{5} \log_d n\big)} 
			\Pr\big(X_1 = (x,y)\mid X_0=\Delta\big) \Pr\big(\tau_{\Delta}^+\wedge \tilde{\tau} \leq \tfrac{1}{5} \log_d n - 1\mid X_0=(x,y)\big) \\
			& \overset{\eqref{eq:bound_probability}}{\geq} \Big(1-\frac{1}{\sigma_n}\Big) 
			\Pr\big(X_1 \in \text{LTLE}\big(\tfrac{1}{5} \log_d n\big) \mid X_0 = \Delta \big).
		\end{split} 
	\end{equation*}
	Note that the last line above converges to 1 in probability as $n \to \infty$ (see~\eqref{eq:good_first_jump}), which with the help of Lemma~\ref{lemma:far_particles} concludes the verification of \eqref{eq:missing1}.
	
	\medskip\noindent
	{\bf 8.}
	We are left with proving \eqref{eq:missing2}. In this case, we can assume once again that $X_1 \in \text{LTLE}(\tfrac{1}{5} \log_d n)$, which in particular implies that, up to time $\tfrac{1}{5} \log_d n$, the random walk can be perfectly coupled with a biased random walk on $\Z$ that starts at $1$ and jumps to the right (respectively, left) with probability $\tfrac{d-1}{d}$ (respectively, $\tfrac{1}{d}$). The claim follows by noting that
	\begin{equation*}
		\begin{split}
			\Pr\big(\tau_{\Delta}^+ \leq \tfrac{1}{5} \log_d n \mid X_0 = \Delta\big) 
			& = o_\P(1) + \Pr\big(\tau_0 \leq \tfrac{1}{5} \log_d n - 1\mid X_0=1\big) \\
			& = o_\P(1) + \Pr\big(\tau_0 < \infty\mid X_0=1\big) = o(1)+\frac{1}{d-1},
		\end{split}
	\end{equation*}
	which concludes the verification of \eqref{eq:missing2}.
\end{proof}

The next lemma identifies the tail of the meeting time of random walks starting from afar.

\begin{lemma}[Tail of meeting time of distant random walks]
	\label{lemma:exponential_meeting_far_particles}
	Let $G_{d,n}(\omega)$ be a regular random graph and consider two independent random walks. Then
	\begin{equation}
		\label{eq:lemmaFVTL_continuous}
		\max_{x,y\colon\, {\rm dist}(x,y) \geq \sigma_n} \sup_{t \geq 0} \Big|\Prob\big(\tau^{x,y}_{\rm meet} > t\big) 
		- \ee^{-\frac{2t\theta_{d}}{n}}\Big| \overset{\P}{\longrightarrow}0.
	\end{equation}
\end{lemma}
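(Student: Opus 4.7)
The plan is to combine the two preceding results—Lemma~\ref{lemma:far_particles}, which says that two independent walks starting at distance at least $\sigma_n$ do not meet on time scales up to $\log^3 n$, and Proposition~\ref{lemma:firstvisitDelta}, which gives the exponential form of the meeting-time tail starting from $\pi\otimes\pi$—by splitting at an intermediate scale $T_n=\log^3 n$. This scale is large enough to guarantee mixing (by Proposition~\ref{prop:geometry-rrg}(iv), $d_{\rm TV}(T_n)\to 0$) but small compared to the meeting-time scale $n$, so that $\exp(-2T_n\theta_d/n)=1-o(1)$.

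For $t\le T_n$ the estimate is immediate: by Lemma~\ref{lemma:far_particles}, $\Prob(\tau_{\rm meet}^{x,y}>t)\ge \Prob(\tau_{\rm meet}^{x,y}>T_n)\ge 1-\ee^{-\uc{c:bound_meeting_time}\sigma_n}=1-o_{\P}(1)$ uniformly in $x,y$ with ${\rm dist}(x,y)\ge \sigma_n$, while $\exp(-2t\theta_d/n)\ge \exp(-2T_n\theta_d/n)=1-o(1)$, so the difference is $o_{\P}(1)$ uniformly in this range.

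For $t>T_n$, I will apply the Markov property at time $T_n$. Writing
\begin{equation*}
\Prob(\tau_{\rm meet}^{x,y}>t)=\Expect_{x,y}\bigl[f(X_{T_n},Y_{T_n})\,\ind\{\tau_{\rm meet}^{x,y}>T_n\}\bigr],
\qquad f(x',y')\coloneqq \Prob(\tau_{\rm meet}^{x',y'}>t-T_n),
\end{equation*}
Lemma~\ref{lemma:far_particles} lets me drop the indicator at cost $O(\ee^{-\uc{c:bound_meeting_time}\sigma_n})$. Because the two walks are independent, the joint law of $(X_{T_n},Y_{T_n})$ under $\Prob_{x,y}$ is the product $P^{T_n}(x,\cdot)\otimes P^{T_n}(y,\cdot)$, whose total-variation distance from $\pi\otimes\pi$ is at most $2d_{\rm TV}(T_n)=o_{\P}(1)$ by Proposition~\ref{prop:geometry-rrg}(iv). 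Replacing the initial law by $\pi\otimes\pi$ (and absorbing the diagonal contribution $\pi\otimes\pi(\{x'=y'\})=1/n=o(1)$) produces $\Prob(\tau_{\rm meet}^{\pi\otimes\pi}>t-T_n)$. Proposition~\ref{lemma:firstvisitDelta} then rewrites this as $\exp(-2(t-T_n)\Theta_d^{(n)}/n)(1+o_{\P}(1))$ uniformly in $t-T_n\ge 0$, and since $\Theta_d^{(n)}\overset{\P}{\longrightarrow}\theta_d$ while $T_n/n\to 0$, this equals $\exp(-2t\theta_d/n)(1+o_{\P}(1))$ uniformly in $t\ge T_n$. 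Putting the two regimes together yields~\eqref{eq:lemmaFVTL_continuous}.

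The main delicacy is keeping every error term simultaneously uniform in the time $t$ and in the pair $(x,y)$. Uniformity in $(x,y)$ is built into Lemma~\ref{lemma:far_particles} and into the definition \eqref{TV} of $d_{\rm TV}$, and uniformity in $t$ is inherited from the $\sup_{t\ge 0}$ in Proposition~\ref{lemma:firstvisitDelta}; only the stitching of the two regimes at $t=T_n$ requires checking continuity of both sides, which is clear from the bounds above.
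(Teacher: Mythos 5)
Your proposal is correct and follows essentially the same route as the paper's proof: split at a polylogarithmic time, use Lemma~\ref{lemma:far_particles} both for the short-time regime and to discard the no-meeting constraint, mix the pair of walks to stationarity, invoke Proposition~\ref{lemma:firstvisitDelta}, and absorb the time shift and the replacement of $\Theta_d^{(n)}$ by $\theta_d$ into a uniform $o_{\P}(1)$ additive error. The only differences are cosmetic: you work directly in continuous time with the single-walk bound $2d_{\rm TV}(T_n)$ from Proposition~\ref{prop:geometry-rrg}(iv), whereas the paper passes to discrete time and uses the product-chain mixing estimate on the auxiliary graph $\Gamma$ with error $n^{-4}$ — both suffice since only an $o(1)$ error is needed.
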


\begin{proof}
	We need to deduce from Proposition \ref{lemma:firstvisitDelta} an (additive) analogous result when the two random walks start far away. For that we use Lemma~\ref{lemma:far_particles}.
	
	For simplicity we work again in discrete time, so that what we actually need to show is
	\begin{equation}
		\label{eq:lemmaFVTL}
		\max_{x,y\colon\, {\rm dist} (x,y) \geq \sigma_n} \max_{t \geq 0} \left|\Prob^{\rm dt}(\tau_{\rm meet}^{x,y}>t) 
		- \Big(1-\frac{\theta_d}{n}\Big)^t \right| \overset{\P}{\longrightarrow} 0.
	\end{equation}
	Fix $x$ and $y$ such that ${\rm dist}(x,y) \geq \sigma_n$, and let $\cE$ be the event that the two random walks meet before time $T$. Since $T = \log^{2} n$, Lemma~\ref{lemma:far_particles} states that $\Pr(\cE) \leq \ee^{-\uc{c:bound_meeting_time}\sigma_n}$ $\whp$ for every pair $(x,y)$ as above. Hence, if $t \leq T$, then
	\begin{equation}
		\label{eq:meeting_time_estimate}
		\min_{x,y\colon\,{\rm dist}(x,y) \geq \sigma_n} \Prob^{\rm dt}\big(\tau_{\rm meet}^{x,y} > t\big) 
		\geq 1 - \ee^{-\uc{c:bound_meeting_time}\sigma_n}.
	\end{equation}
	On the other hand, since $T\ge T_{\rm mix}$, if $t>T$, then
	\begin{equation}
		\label{eq:mixingtime}
		\max_{(x,y)\in V_\Gamma } |\Pr\big(X_t^{x}=v,Y^{y}_t=w\big) - \pi_\Gamma(v,w)| \leq \frac{1}{n^6}. 
	\end{equation}
	Therefore
	\begin{equation*}
		\begin{split}
			\Prob^{\rm dt} \big(\tau_{\rm meet}^{x,y} > t\big) 
			& = \sum_{(v,w)\in V_\Gamma \setminus \{\Delta\}} \Pr\big(\tau_\Delta^{(x,y)}>T, \, 
			X^x_{T}=v, \, Y^y_{T} = w\big) \Pr\big( \tau^{(v,w)}_\Delta > t-T \big) \\
			& \leq \sum_{(v,w) \in V_\Gamma \setminus \{\Delta\}} \Pr\big(X^x_{T} = v,\, Y^y_{T} = w\big) 
			\Pr \big(\tau^{(v,w)}_\Delta > t-T\big) \\
			&\leq \sum_{(v,w) \in V_\Gamma \setminus \{\Delta\}} \pi_\Gamma(v,w) \Pr\big(\tau^{(v,w)}_\Delta>t-{T}\big) 
			+ \frac{1}{n^{6}} \\
			& \leq \Pr\big(\tau^{\pi_\Gamma}_\Delta>t-T\big) + \frac{1}{n^{4}}.
		\end{split}
	\end{equation*}
	On the other hand,
	\begin{equation*}
		\begin{split}
			&\Prob^{\rm dt} \big( \tau_{\rm meet}^{x,y} > t \big)\\ 
			& = \sum_{(v,w) \in V_\Gamma\setminus\Delta} \Pr\big(\tau_\Delta^{(x,y)} > T, \, X^x_{T} = v, \, 
			Y^y_{T} = w\big) \Pr\big(\tau^{(v,w)}_\Delta > t-T\big)\\
			& \geq \left(\sum_{(v,w) \in V_\Gamma \setminus \{\Delta\}} \Pr\big(X^x_{T}=v, \, Y^y_{T}=w\big) \Pr\big(\tau^{(v,w)}_\Delta > t-T \big) \right)
			- \Prob^{\rm dt}\big(\tau_{\rm meet}^{x,y} \leq T\big) \\
			&\overset{\eqref{eq:mixingtime}}{\geq} \sum_{(v,w) \in V_\Gamma \setminus \{\Delta\} } \pi_{\Gamma}(v,w) \Pr\big( \tau^{(v,w)}_\Delta > t-T \big) - \frac{1}{n^{6}} - \Prob^{\rm dt}\big( \tau_{\rm meet}^{x,y} \leq T \big) \\
			& \overset{\eqref{eq:meeting_time_estimate}}{=} 
			\Pr\big( \tau^{\pi_{\Gamma}}_\Delta > t-T \big)-\frac{1}{n^{4}} - \ee^{-\uc{c:bound_meeting_time}\sigma_n}.
		\end{split}
	\end{equation*}
	This, when combined with Proposition~\ref{lemma:firstvisitDelta}, implies the claim. Indeed, thanks to \eqref{eq:lemmaFVTLdelta} and \eqref{eq:identity} we have
	\begin{equation*}
		\Pr\big( \tau^{\pi_{\Gamma}}_\Delta > t-T \big)=(1+o(1)) \Big(1-\frac{\theta_d}{n} \Big)^{t-T}.
	\end{equation*}
	Moreover,
	\begin{equation*}
		\Big| \Big(1-\frac{\theta_d}{n} \Big)^t-\Big(1-\frac{\theta_d}{n} \Big)^{t-T} \Big| 
		= \Big(1-\frac{\theta_d}{n} \Big)^t \Big| 1- \Big(1-\frac{\theta_d}{n} \Big)^{-T} 
		\Big| \leq \ee^{-\frac{t\theta_{d}}{n}}\Big|1- \Big(1-\frac{\theta_d}{n} \Big)^{-T} \Big|
	\end{equation*}
	and the latter converges to zero as $n \to \infty$, uniformly in $t \geq 0$.
\end{proof}

The next proposition identifies the tail of the meeting time of random walks starting from the vertices of a typical edge.

\begin{proposition}[Tail of meeting time of adjacent random walks on a LTLE]
	\label{lemma:meeting_near_particles}
	Let $G_{d,n}(\omega)$ be a regular random graph. Then, for all $t_n$ such that $\lim_{n\to\infty} t_n = \infty$ and $\lim_{n\to\infty} t_n/n = s \in [0,\infty)$, 
	\begin{equation*}
		\max_{e=(x,y)\in {\rm LTLE}( \frac{1}{5} \log_d n)} \Big| \Prob\big(\tau_{\rm meet}^{x,y} > t_{n}) - \theta_d\ee^{-2s\theta_d} \Big| \overset{\P}{\longrightarrow}0.
	\end{equation*}
\end{proposition}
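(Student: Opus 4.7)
The plan is to decompose $\Prob(\tau_{\rm meet}^{x,y} > t_n)$ via the strong Markov property at an intermediate scale $T_n$, combining a short-time tree coupling (which handles the locally tree-like neighbourhood of the starting edge) with the long-time exponential tail from Lemma~\ref{lemma:exponential_meeting_far_particles} (which takes over once the two walks are far apart).

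Concretely, I would choose $T_n = \sigma_n^{3}$, so that $T_n \to \infty$, $T_n = o(\log n)$, and $T_n/n \to 0$. Fix $e=(x,y) \in {\rm LTLE}(\tfrac{1}{5}\log_d n)$. By a standard Poisson tail estimate (as in Step~1 of the proof of Lemma~\ref{lemma:far_particles}), with probability $1-o(1)$ the two independent random walks make at most $\tfrac{1}{5}\log_d n$ jumps in total up to time $T_n$, and on this event the pair $(X_t,Y_t)_{0 \le t \le T_n}$ can be coupled with two independent continuous-time random walks on the infinite $d$-regular tree $\cT_d$ starting from the endpoints of an edge. Applying the strong Markov property at $T_n$,
\begin{equation*}
  \Prob(\tau_{\rm meet}^{x,y} > t_n) = \sum_{v \neq w} \Prob\big(X_{T_n}=v,\,Y_{T_n}=w,\,\tau_{\rm meet}^{x,y} > T_n\big)\, \Prob\big(\tau_{\rm meet}^{v,w} > t_n-T_n\big).
\end{equation*}
Summing out the joint law, the tree coupling together with Lemma~\ref{lemma:gambler} yields $\Prob(\tau_{\rm meet}^{x,y} > T_n) = f_d(T_n) + o(1) \to \theta_d$. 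Moreover, on $\cT_d$ the inter-particle distance is a biased continuous-time walk on $\N$ with upward drift $(d-2)/d$ per jump, so a law-of-large-numbers/gambler's-ruin argument implies that, conditional on never being absorbed at $0$, the distance at time $T_n$ exceeds $\sigma_n$ with conditional probability $1-o(1)$.

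For the second factor, Lemma~\ref{lemma:exponential_meeting_far_particles} gives, uniformly over $(v,w)$ with ${\rm dist}(v,w) \ge \sigma_n$,
\begin{equation*}
  \sup_{t \ge 0} \big| \Prob(\tau_{\rm meet}^{v,w} > t) - e^{-2t\theta_d/n} \big| \overset{\P}{\longrightarrow} 0.
\end{equation*}
Since $T_n/n \to 0$ and $t_n/n \to s$, $e^{-2(t_n-T_n)\theta_d/n} \to e^{-2s\theta_d}$. Restricting the sum in the strong Markov decomposition to the probability-$(\theta_d - o(1))$ set of pairs $(v,w)$ with ${\rm dist}(v,w) \ge \sigma_n$, and assembling the two factors, yields $\Prob(\tau_{\rm meet}^{x,y}>t_n) = \theta_d e^{-2s\theta_d} + o_\P(1)$. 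Uniformity over $e \in {\rm LTLE}(\tfrac{1}{5}\log_d n)$ is automatic, because the tree coupling has the same law for every such starting edge and the far-particles estimate is uniform by Lemma~\ref{lemma:exponential_meeting_far_particles}.

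The main obstacle is the joint control of $(X_{T_n},Y_{T_n})$ on the event $\{\tau_{\rm meet}^{x,y}>T_n\}$: one must verify that conditioning on non-meeting within the tree-coupling window indeed forces the two walks to be at distance at least $\sigma_n$ whp, so that the second factor can be handled by Lemma~\ref{lemma:exponential_meeting_far_particles}. This reduces to a transient biased-walk calculation on $\N$, together with bookkeeping of the negligible event that the Poisson number of jumps exceeds the LTLE radius before time $T_n$.
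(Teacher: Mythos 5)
Your proposal is correct and follows essentially the same route as the paper: a short-scale coupling of the two walks with the biased distance walk on the $d$-regular tree, spliced via the Markov property onto the uniform exponential tail of Lemma~\ref{lemma:exponential_meeting_far_particles} for walks at distance at least $\sigma_n$. The only substantive differences are technical: the paper splices at the stopping time $\tau_{\rm far}^{x,y}$ (first time the distance reaches $\sigma_n$), restricted to $\{\tau_{\rm far}^{x,y}\le\sigma_n^3\}$, which avoids your extra step of showing that the distance at the deterministic time $T_n$ exceeds $\sigma_n$ on the non-meeting event (your transient biased-walk argument does close this, e.g.\ reach $2\sigma_n$ quickly and never return below $\sigma_n$), and it works in discrete time before Poissonising, whereas you invoke the continuous-time form of Lemma~\ref{lemma:exponential_meeting_far_particles} directly. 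One small omission: your decomposition at $T_n=\sigma_n^3$ requires $t_n\ge T_n$, so the regime $t_n\le\sigma_n^3$ (still allowed, since only $t_n\to\infty$ is assumed) must be treated separately, exactly as the paper does by falling back on Lemma~\ref{lemma:general_graph}, which in that regime gives $\Prob(\tau_{\rm meet}^{x,y}>t_n)=f_d(t_n)+o(1)\to\theta_d$ while $s=0$.
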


\begin{proof}
	Note that it is enough to assume that $t_{n} \geq \sigma_n^3$, since otherwise the claim follows from Lemma \ref{lemma:general_graph}. Let $\tau_{\rm far}^{x,y}$ be the first time at which the two random walks are at distance at least $\sigma_n$, and $\tau_{\rm exit}^{x,y}$ the first time at which one of the two random walks exits the locally tree-like neighbourhood of radius $\tfrac{1}{5} \log_d n$ of $e$. Put $\tau_0^{x,y} = \tau_{\rm meet}^{x,y}\wedge \tau_{\rm far}^{x,y}\wedge \tau_{\rm exit}^{x,y}$. For simplicity, we start by analysing the discrete-time process. Up to time $\tau_0^{x,y}$, the process can be coupled to a biased random walk on $\Z$ that steps to the right with probability $\frac{d-1}{d}$ and to the left with probability $\frac{1}{d}$. In particular, 
	\begin{equation*}
		\Expect^{\rm dt}\big[\tau_0^{x,y}\big] \leq \frac{d\sigma_n}{d-2}
	\end{equation*}
	because $\tau_0^{x,y}$ is bounded by the exit time of the biased random walk from the interval $[1,\sigma_n]$, and therefore
	\begin{equation}
		\label{eq:tau-exit}
		\begin{split}
			\Prob^{\rm dt}\big(\tau^{x,y}_0 = \tau^{x,y}_{\rm exit}\big) 
			&\leq \Prob^{\rm dt}\big(\tau^{x,y}_{\rm exit} \leq \sigma_n^3 \big) 
			+ \Prob^{\rm dt}\big(\tau^{x,y}_0 > \sigma_n^3\big) \\
			&\leq 0 + \frac{1}{\sigma_n^3}\frac{d\sigma_n}{d-2}=o(1),
		\end{split}
	\end{equation}
	where the last term follows from the Markov inequality. Similarly, by a gambler ruin argument,
	\begin{equation*}
		\Prob^{\rm dt}\left(\tau^{x,y}_{\rm far}>\sigma_n^3 \right)=o(1),
	\end{equation*}
	and therefore
	\begin{equation}
		\label{eq:first}
		\begin{aligned}
			\Prob\big(\tau_{\rm meet}^{x,y} > t_{n}\big)
			&=\Prob\big(\tau_{\rm meet}^{x,y} > t_{n}, \, \tau_0^{x,y} = \tau_{\rm far}^{x,y}\le\sigma_n^3\big) + o(1).
		\end{aligned}
	\end{equation}
	Recall the usual asuymptotic notation in which, for any two sequences $f_n$ and $g_n$, $f_n\sim g_n$ is shorthand for $f_n=(1+o(1))g_n$.
	The probability in the right-hand side of \eqref{eq:first} can be written, in the discrete-time setting, as 
	\begin{equation*}
		\begin{aligned}
			&\Prob^{\rm dt}\big(\tau_{\rm meet}^{x,y} > t_{n}, \, \tau_0^{x,y} = \tau_{\rm far}^{x,y}\le \sigma_n^3\big) \\ 
			&=\sum_{u=0}^{\sigma_n^3}\sum_{(v,w)\colon\,{\rm dist}(v,w)>\sigma_n}\Prob^{\rm dt}(\tau_0^{x,y}=\tau_{\rm far}^{x,y}=u,\:
			(X_{\tau_0^{x,y}},Y_{\tau_0^{x,y}})=(v,w))	\Prob(\tau_{\rm meet}^{v,w}>t_n-u)\\
			&\sim\sum_{u=0}^{\sigma_n^3}\sum_{(v,w)\colon\,{\rm dist}(v,w)>\sigma_n} \Prob^{\rm dt}(\tau_0^{x,y}=\tau_{\rm far}^{x,y}=u,\: 
			(X_{\tau_0^{x,y}},Y_{\tau_0^{x,y}})=(v,w))	\left(1-\frac{\theta_d}{n}\right)^{t_{n}-u}\\
			&=\left(1-\frac{\theta_d}{n}\right)^{t_{n}}\sum_{u=0}^{\sigma_n^3} \Prob^{\rm dt}(\tau_0^{x,y}=\tau_{\rm far}^{x,y}=u)\left(1-\frac{\theta_d}{n}\right)^{-u}	\\
			&\sim\left(1-\frac{\theta_d}{n}\right)^{t_{n}}\sum_{u=0}^{\sigma_n^3} \Prob^{\rm dt}(\tau_0^{x,y}=\tau_{\rm far}^{x,y}=u)
			\sim \ee^{-s\theta_d} \Prob^{\rm dt}(\tau_0^{x,y}=\tau_{\rm far}^{x,y}\le\sigma_n^3)
			\sim \theta_d\,\ee^{-s\theta_d} \qquad \whp,
		\end{aligned}
	\end{equation*}
	where in the first approximation we use Lemma~\ref{lemma:exponential_meeting_far_particles} and in the last approximation that the probability for the biased random walk to exit $[1,\sigma_n]$ at the right converges to $\theta_d$ as $n\to\infty$. In order to pass to the continuous-time setting, it is enough to realise that, for all $\varepsilon\in(0,s)$,
	\begin{equation*}
		\begin{aligned}
			\Prob\big(\tau_{\rm meet}^{x,y} > t_{n}\big)
			&= \sum_{u=0}^\infty \Pr\left({\rm Poisson}(2t_n)=u\right) \Prob^{\rm dt} \big( \tau_{\rm meet}^{x,y} > u \big)\\
			&= \sum_{u=2(s-\varepsilon)n\vee 0}^{2(s+\varepsilon)n} \Pr\left({\rm Poisson}(2sn)=u\right)	
			\Prob^{\rm dt}\big( \tau_{\rm meet}^{x,y} > u\big)+o_\P(1)\\
			&= \theta_d\,\ee^{-2s\theta_d}+O_\P(\varepsilon),
		\end{aligned}
	\end{equation*}
	after which the claim follows by taking the limit $\varepsilon \downarrow 0$.
\end{proof}

The next two statements turn the result in Proposition \ref{lemma:meeting_near_particles} into a statement about the expected density of discordant edges at time $O(n)$ for the voter model.

\begin{corollary}[Discordant edges at small distances]
	\label{coro:largetimes}
	Let $G_{d,n}(\omega)$ be a $d$-regular random graph, let $e=(x,y) $ be in ${\rm LTLE}(\tfrac{1}{5} \log_d n)$, and consider the voter model with initial density $u \in (0,1)$. Then, for all $t_n$ such that $\lim_{n\to\infty} t_n=\infty$ and $\lim_{n\to\infty} t_n/n = s \in [0,\infty)$,
	\begin{equation*}
		\Big| \Prob_u\big(e \in D_{t_{n}} \big) - 2u(1-u)\,\theta_d\,\ee^{-2s\theta_d} \Big|\overset{\P}{\longrightarrow}0.
	\end{equation*}
\end{corollary}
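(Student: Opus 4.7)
The strategy is to reduce the statement about discordances to the meeting-time estimate already proved in Proposition~\ref{lemma:meeting_near_particles}, by exploiting the duality between the voter model and coalescing random walks together with the independence of the initial opinions from the graphical construction.

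First, I would unpack the duality. Fix the edge $e=(x,y)$ and consider the pair of backward random walks $(X_s)_{s\in[0,t_n]}$, $(Y_s)_{s\in[0,t_n]}$ obtained by following the graphical representation from the space-time points $(x,t_n)$ and $(y,t_n)$ downward. By construction $\eta_{t_n}(x)=\eta_0(X_{t_n})$ and $\eta_{t_n}(y)=\eta_0(Y_{t_n})$, and if $\tau_{\rm meet}^{x,y}\le t_n$ then $X_{t_n}=Y_{t_n}$, so that $\eta_{t_n}(x)=\eta_{t_n}(y)$ and $e\notin D_{t_n}$. Conditionally on $\{\tau_{\rm meet}^{x,y}>t_n\}$, the two backward walks end at distinct vertices, and since the initial configuration $\eta_0$ is drawn as $[{\rm Ber}(u)]^{\otimes V}$ independently of the graphical Poisson processes, the values $\eta_0(X_{t_n})$ and $\eta_0(Y_{t_n})$ are independent $\mathrm{Ber}(u)$ random variables. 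Hence
\begin{equation*}
\Prob_u\bigl(e\in D_{t_n}\bigr)=2u(1-u)\,\Prob\bigl(\tau_{\rm meet}^{x,y}>t_n\bigr).
\end{equation*}

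Second, I would simply invoke Proposition~\ref{lemma:meeting_near_particles}: under the assumption $e\in{\rm LTLE}(\tfrac{1}{5}\log_d n)$, with high $\P$-probability
\begin{equation*}
\Bigl|\Prob\bigl(\tau_{\rm meet}^{x,y}>t_n\bigr)-\theta_d\,\ee^{-2s\theta_d}\Bigr|\overset{\P}{\longrightarrow}0,
\end{equation*}
uniformly in the choice of $e$ within ${\rm LTLE}(\tfrac{1}{5}\log_d n)$. Multiplying by the bounded constant $2u(1-u)$ preserves the uniform convergence in probability, yielding the claim.

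There is no substantive obstacle: the only thing one has to be careful about is the independence argument for the initial opinions, which requires noting that the graphical construction (and therefore the backward walks) uses only the Poisson clocks and is independent of the random initial configuration $\eta_0$. Given this, the corollary is just Proposition~\ref{lemma:meeting_near_particles} rescaled by $2u(1-u)$.
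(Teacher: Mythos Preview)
Your proposal is correct and follows exactly the paper's approach: the paper's proof is the one-liner ``The claim follows from Proposition~\ref{lemma:meeting_near_particles} via duality,'' and you have simply spelled out what that duality argument entails.
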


\begin{proof}
	The claim follows from Proposition \ref{lemma:meeting_near_particles} via duality.
\end{proof}

\begin{proposition}[Expected density of discordant edges]
	\label{prop:linearmean}
	Let $G_{d,n}(\omega)$ be a $d$-regular random graph, and consider the voter model with initial density $u \in (0,1)$. Then, for all $t_n$ such that $\lim_{n\to\infty} t_n=\infty$ and $\lim_{n\to\infty} t_n/n = s \in [0,\infty)$,
	\begin{equation*}
		\Big| \Expect_u[\cD_{t_n}] - 2u(1-u)\,\theta_d\,\ee^{-2s\theta_d}\Big| \overset{\P}{\longrightarrow}0.
	\end{equation*}
\end{proposition}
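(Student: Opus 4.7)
The plan is to lift the single-edge estimate of Corollary~\ref{coro:largetimes} to an estimate on the average over all edges by separating the sum over edges according to whether they are locally tree-like at scale $\tfrac{1}{5}\log_d n$, and invoking the density bound on non-LTLE edges from Proposition~\ref{prop:geometry-rrg}(iii).

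First, write
\begin{equation*}
\Expect_u[\cD_{t_n}] = \frac{1}{m}\sum_{e \in E} \Prob_u(e \in D_{t_n})
= \frac{1}{m}\sum_{e \in E_\star} \Prob_u(e \in D_{t_n}) + \frac{1}{m}\sum_{e \notin E_\star} \Prob_u(e \in D_{t_n}),
\end{equation*}
where $E_\star = \text{LTLE}\bigl(\tfrac{1}{5}\log_d n\bigr)$. Since every summand lies in $[0,1]$ and Proposition~\ref{prop:geometry-rrg}(iii) gives $|E \setminus E_\star| = o(n) = o(m)$ whp, the second sum is $o_{\P}(1)$ and can be dropped.

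For the first sum, note that the argument underlying Proposition~\ref{lemma:meeting_near_particles} yields the convergence
\begin{equation*}
\max_{e=(x,y) \in E_\star} \Bigl| \Prob\big(\tau_{\rm meet}^{x,y} > t_n\big) - \theta_d\,\ee^{-2s\theta_d}\Bigr| \overset{\P}{\longrightarrow} 0
\end{equation*}
\emph{uniformly} over $e \in E_\star$. Translating this through the standard voter/coalescing-walk duality exactly as in Corollary~\ref{coro:largetimes}, and using the independence of the initial Bernoulli opinions from the dual random walks so that the probability of a discordance equals $2u(1-u)$ times the non-meeting probability, we get
\begin{equation*}
\max_{e \in E_\star} \Bigl| \Prob_u(e \in D_{t_n}) - 2u(1-u)\,\theta_d\,\ee^{-2s\theta_d}\Bigr| \overset{\P}{\longrightarrow} 0.
\end{equation*}
Averaging this uniform bound over $E_\star$ and combining with $|E_\star|/m \to 1$ in probability yields
\begin{equation*}
\Bigl| \tfrac{1}{m}\sum_{e \in E_\star} \Prob_u(e \in D_{t_n}) - 2u(1-u)\,\theta_d\,\ee^{-2s\theta_d} \Bigr| \overset{\P}{\longrightarrow} 0,
\end{equation*}
and combining with the bound on the non-LTLE contribution proves the proposition.

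The only delicate point is the uniformity in $e \in E_\star$ in Corollary~\ref{coro:largetimes}; without it, a pointwise convergence in probability for each edge would not be enough to control the average over $\Theta(n)$ edges. This uniformity is however already built into Proposition~\ref{lemma:meeting_near_particles} (which is stated with a maximum over $e \in E_\star$), so the only real work is to note that Corollary~\ref{coro:largetimes} inherits the same uniformity from its proof. Beyond that, the argument is a routine first-moment computation.
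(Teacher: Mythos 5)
Your proposal is correct and follows essentially the same route as the paper: the paper proves Proposition~\ref{prop:linearmean} by invoking Corollary~\ref{coro:largetimes} together with the identical edge-splitting argument (LTLE versus non-LTLE edges, with the latter contributing $o(n)/m = o(1)$ by Proposition~\ref{prop:geometry-rrg}(iii)) already used for Proposition~\ref{prop:mean_linear_time}. Your remark on uniformity over $e \in E_\star$ is well taken and is indeed supplied by the maximum in Proposition~\ref{lemma:meeting_near_particles}.
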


\begin{proof}
	The claim follows from  Corollary \ref{coro:largetimes} and the same argument as in the proof of Proposition \ref{prop:mean_linear_time}.
\end{proof}

\begin{proof}[Proof of Theorem~\ref{theorem:expectation}]
	Proposition~\ref{prop:mean_linear_time} takes care of the case where $t_n$ is bounded, Proposition~\ref{prop:linearmean} takes care of the case where $t_n \to \infty$ and $t_n/n \to s \in [0,\infty)$, while the case $t_n/n \to \infty$ follows from domination.
\end{proof}


\section{Beyond the expectation}
\label{sec:beyondexp}

In this section we prove Theorem \ref{thm:beyondthemean}. We start with a proposition that implies the first statement of the theorem.

\begin{proposition}[Concentration for worst case initialization]
	\label{prop:pointwise-concentration}
	Consider times $t_n$ such that $t_n\to\infty$ and $t_n/n\to 0$. Then, for any $\varepsilon>0$,
	\begin{equation*}
		\sup_{\xi \in \{0,1\}^{V}} \Prob_\xi\left(\left|\cD_{t_n}-\Expect_\xi\left[\cD_{t_n}\right]\right|>\varepsilon\right) \overset{\P}{\longrightarrow} 0.
	\end{equation*}
\end{proposition}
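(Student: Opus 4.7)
The plan is to apply the second moment method: Chebyshev reduces the claim to showing that $\sup_{\xi\in\{0,1\}^V}\Var_\xi(\cD_{t_n})\overset{\P}{\longrightarrow}0$. Expanding via indicators,
\begin{equation*}
\Var_\xi(\cD_{t_n})=\frac{1}{m^{2}}\sum_{e,e'\in E}\operatorname{Cov}_\xi\!\left(\ind_{e\in D_{t_n}},\ind_{e'\in D_{t_n}}\right).
\end{equation*}
I would split the double sum into \emph{close} pairs (where some endpoint of $e$ lies within graph distance $\sigma_n$ of some endpoint of $e'$, with $\sigma_n=\lceil\log\log n\rceil^2$ as in \eqref{sigma}) and \emph{far} pairs. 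For close pairs I use the trivial bound $|\operatorname{Cov}_\xi|\le 1$; by $d$-regularity the number of such pairs is at most $Cm\,d^{\sigma_n}$, so their contribution to the variance is at most $Cd^{\sigma_n}/m=n^{-1+o(1)}=o(1)$, uniformly in $\xi$.

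For far pairs the key step is the uniform covariance bound
\begin{equation*}
\left|\operatorname{Cov}_\xi\!\left(\ind_{e\in D_{t_n}},\ind_{e'\in D_{t_n}}\right)\right|
\le 2\,\Prob\!\left(\tau_{\rm cross}^{e,e'}\le t_n\right),
\end{equation*}
where $\tau_{\rm cross}^{e,e'}$ is the first time in the $4$-fold coalescing random walk system (with walks started at the endpoints of $e$ and $e'$) at which a walk originating from $e$ meets a walk originating from $e'$. The idea is that, via the graphical construction, before $\tau_{\rm cross}^{e,e'}$ the trajectories issued from $e$ and from $e'$ are measurable with respect to disjoint portions of the underlying Poisson clocks, hence can be coupled with a pair of \emph{independent} two-walk coalescing systems started from the endpoints of $e$ and $e'$ respectively; on that event the two dual pairs decorrelate and the conditional covariance vanishes, while on the complementary event the covariance is bounded by $1$. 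Importantly, this bound does not depend on $\xi$, which is what delivers the uniformity in the statement.

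To control $\Prob(\tau_{\rm cross}^{e,e'}\le t_n)$ uniformly over far pairs, I would apply Lemma~\ref{lemma:exponential_meeting_far_particles} to each of the four ordered vertex pairs $(x_i,y_j)$, $i\in e$, $j\in e'$. Each of these pairs has distance at least $\sigma_n$, so the pairwise meeting time has tail $\exp(-2\theta_d t/n)(1+o_\P(1))$ uniformly in $t\ge 0$. Since $t_n/n\to 0$, each such meeting probability is $O(t_n/n)+o_\P(1)$, and a union bound over the four cross pairs gives $\sup_{\text{far}\,(e,e')}\Prob(\tau_{\rm cross}^{e,e'}\le t_n)=o_\P(1)$. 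Combined with the trivial $m^{-2}\sum=O(1)$ normalisation, the far-pair contribution to $\Var_\xi(\cD_{t_n})$ is $o_\P(1)$, and Chebyshev concludes the proof.

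The main obstacle is the rigorous justification of the covariance bound: the $4$-fold coalescing process is driven by a single graphical representation shared by the two pairs, so one cannot simply declare independence before $\tau_{\rm cross}^{e,e'}$. The cleanest resolution is to construct an explicit coupling between the $4$-fold system driven by $\mathscr{P}$ and two independent $2$-fold coalescing systems driven by independent Poisson clocks $\mathscr{P}^{(1)},\mathscr{P}^{(2)}$, matching the clocks at precisely the (edge, time) coordinates actually consulted by each pair up to $\tau_{\rm cross}^{e,e'}$ and filling in fresh independent randomness elsewhere. Before $\tau_{\rm cross}^{e,e'}$ the coordinates consulted by the two pairs are disjoint, so the coupled independent systems agree with the $4$-fold one, and the usual $\ind_{\mathcal{E}}\!+\ind_{\mathcal{E}^c}$ split yields the desired covariance estimate.
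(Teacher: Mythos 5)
Your proposal is correct and follows essentially the same route as the paper: a second-moment/Chebyshev argument, the same split into close pairs (crudely counted via $d^{\sigma_n}$) and far pairs, decoupling of far pairs on the event that the four dual walks never cross-meet before $t_n$, and Lemma~\ref{lemma:exponential_meeting_far_particles} together with $t_n=o(n)$ to make that event's complement $o_\P(1)$ uniformly. The only difference is in the decoupling step: the obstacle you flag (justifying independence before $\tau_{\rm cross}^{e,e'}$ under a shared graphical representation) is resolved in the paper without any explicit coupling, by summing over realisations of the $e'$-trajectories that avoid the $e$-trajectories and using that their conditional law equals their unconditional law, which yields the one-sided bound $\Prob_\xi(e,e'\in D_t,\,\cE)\le \Prob_\xi(e\in D_t)\,\Prob_\xi(e'\in D_t)$ — sufficient for the variance estimate and simpler than the two-sided coupling bound you propose.
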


\begin{proof}
	Putting $E_\star={\rm LTLE}(\tfrac{1}{5} \log_d n)\subset E$ and recalling that $|E_\star|=m-o(n)$ \emph{whp} by Proposition \ref{prop:geometry-rrg} (ii), we have
	\begin{equation}
		\label{eq:2ndmom}
		\begin{aligned}
			\Expect[|D_t|^2]
			&= \sum_{e,e'\in E} \Prob\left(e,e'\in D_t \right)=\sum_{e,e'\in E_\star} \Prob\left(e,e'\in D_t \right)+o(n^2) \\
			&= \sum_{e\in E_\star}\left[\sum_{e'\in E_\star,\:{\rm dist(e,e')>\sigma_n}}\Prob\left(e,e'\in D_t \right)+O(d^{\sigma_n})\right]+o(n^2),
		\end{aligned}
	\end{equation}
	with $\sigma_n$ as in~\eqref{sigma}.
	Fix $e=(x,y),e'=(x',y')\in E_\star$ such that ${\rm dist}(e,e')>\sigma_n$, and consider the event 
	\begin{equation*}
		\cE=\cE_{e,e'}\coloneqq\left\{\tau_{\rm meet}^{x,x'}\wedge 
		\tau_{\rm meet}^{x,y'}\wedge\tau_{\rm meet}^{y,x'}\wedge\tau_{\rm meet}^{y,y'}> t\right\}.
	\end{equation*}
	We observe that, on the event $\cE$, the events $\{e\in D_t \}$ and $\{e'\in D_t\}$ are negatively correlated. Indeed, denote by $\sigma_e$ (respectively, $\sigma_{e'}$) a realisation of length $t$ of the two independent random walk trajectories starting at $x$ and $y$ (respectively, $x'$ and $y'$). Let $\cH(\sigma_{e})$ denote the set of possible realisations of $\sigma_{e'}$ that never meet the trajectory $\sigma_e$. We then have
	\begin{equation}\label{negCorrPair}
		\begin{split}
			\Prob_\xi(e,e'\in D_t,\:\cE)
			&=\sum_{\sigma_e\in \{e\in D_t \}}\sum_{\sigma_{e'}\in \{e'\in D_t \}\cap\cH(\sigma_e)} 
			\Prob_\xi(\sigma_e)\Prob_\xi\left(\sigma_{e'}\mid \sigma_e \right)\\
			&=\sum_{\sigma_e\in \{e\in D_t \}}\sum_{\sigma_{e'}\in \{e'\in D_t \}\cap\cH(\sigma_e)} \Prob_\xi(\sigma_e)\Prob_\xi\left(\sigma_{e'} \right)\\
			&\le\sum_{\sigma_e\in \{e\in D_t \}}\sum_{\sigma_{e'}\in \{e'\in D_t \}} \Prob_\xi(\sigma_e)\Prob_\xi\left(\sigma_{e'} \right)
			= \Prob_\xi(e\in D_t)\:\Prob_\xi(e'\in D_t),
		\end{split}
	\end{equation} 
	which gives the claimed negative dependence.
	
	Furthermore, by Lemma~\ref{lemma:exponential_meeting_far_particles}, we have $\Prob(\cE^c)=o_\P(1)$, which together with~\eqref{negCorrPair} guarantees that
	\begin{equation*}
		\Prob_\xi \left( e,e'\in D_t \right) \leq \Prob_\xi\left(e,e'\in D_t ,\cE \ \right)+\Prob_\xi(\cE^c)\le \Prob_\xi(e\in D_t)\Prob_\xi\left(e'\in D_t\right)+o_\P(1).
	\end{equation*}
	Inserting the latter inequality into \eqref{eq:2ndmom}, we get
	\begin{equation*}
		\begin{aligned}
			\Expect_\xi[|D_t|^2]&= \sum_{e\in E_\star}\left[\sum_{e'\in E_\star,\:{\rm dist}(e,e')>\sigma_n}\Prob_\xi\left(e,e'\in D_t \right)+O(d^{\sigma_n})\right]+o(n^2)\\
			&\le \sum_{e\in E_\star}\left[\sum_{e'\in E_\star,\:{\rm dist}(e,e')>\sigma_n}\left[\Prob_\xi(e\in D_t)\Prob_\xi\left( e'\in D_t\right)+o_\P(1)\right]+O(d^{\sigma_n})\right]+o(n^2)\\
			&\le \Expect_\xi[|D_t|]^2+o_\P(n^2)+o(n^2)
			= (1+o_\P(1))\Expect_\xi[|D_t|]^2.
		\end{aligned}
	\end{equation*}
	Therefore, by Chebyshev's inequality,
	\begin{equation*}
		\Prob_\xi\left(\big| |D_t|-\Expect_\xi|D_t| \big|>\varepsilon m \right) 
		\leq \Prob_\xi\left(\big| |D_t|-\Expect_\xi|D_t| \big|>\varepsilon \Expect_\xi|D_t| \right)
		\leq \frac{\var_\xi(|D_t|)}{\varepsilon^2\Expect_\xi[|D_t|]^2}=o_\P(1).
	\end{equation*}
\end{proof}

We next consider times $t_n$ such that $t_n/n \to s \in (0,\infty)$ and use results from \cite{CCC}. Let
\begin{equation}\label{eq:gamma-n}
	\gamma_{n} = \Expect[\tau_{\textnormal{meet}}^{\pi \otimes \pi}]
\end{equation}
be the expected meeting time of two independent continuous-time simple random walks with uniformly chosen initial positions. \cite{CCC} (see Theorems 2.1 and 2.2 therein) provide conditions on the underlying sequence of graphs under which the process $(\cB^{n}_{\gamma_{n} t})_{t \geq 0}$, recording the proportion of type-1 vertices (recall \eqref{eq:def-frac-of-blue}), converges as $n \to \infty$ in the space of c\`{a}dl\`ag paths to the Fisher-Wright diffusion $(\bar\cB_t)_{t \geq 0}$ for $\gamma_n$ as in \eqref{eq:gamma-n}. In the case of $d$-regular random graphs,
\begin{equation}
	\label{eq:asymptotics_gamma}
	\gamma_{n} = \frac{1}{2}\theta_{d}^{-1}\,n + o_\P(1),
\end{equation}
and the requirements in \cite{CCC} are easily verified \emph{whp}.

The following lemma is the central estimate that we need in order to establish convergence of the proportion of discordant edges.

\begin{lemma}[Linking density of discordant edges to density of type-1]
	\label{lemma:l1_estimate}
	Let $s_{n}$ converge to a finite value. Then
	\begin{equation*}
		\Expect_{u} \Big[ \Big|\cD_{s_{n} \gamma_{n}} 
		- 2\theta_{d}\cB^{n}_{s_{n} \gamma_{n}}(1-\cB^{n}_{s_{n} \gamma_{n}}) \Big| \Big] 
		\overset{\P}{\longrightarrow} 0.
	\end{equation*}
\end{lemma}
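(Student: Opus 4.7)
The plan is to introduce an intermediate time $T=T_n=\lceil\log^3 n\rceil$, satisfying $T\to\infty$, $T/n\to 0$ and $T\gg T_{\rm mix}$ (cf.\ Proposition~\ref{prop:geometry-rrg}(iv)). Writing $t=s_n\gamma_n$, I decompose
\begin{equation*}
\cD_t-2\theta_d\cB^n_t(1-\cB^n_t)=I_1+I_2+I_3,
\end{equation*}
where $I_1=\cD_t-\Expect[\cD_t\mid\eta_{t-T}]$, $I_2=\Expect[\cD_t\mid\eta_{t-T}]-2\theta_d\cB^n_{t-T}(1-\cB^n_{t-T})$, and $I_3=2\theta_d[\cB^n_{t-T}(1-\cB^n_{t-T})-\cB^n_t(1-\cB^n_t)]$, and argue that $\Expect_u[|I_j|]\to 0$ in $\P$-probability for each $j=1,2,3$.

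The first and third terms are handled by general arguments. For $I_1$, the Markov property reduces the analysis to a voter model of duration $T$ starting from the (arbitrary) configuration $\eta_{t-T}$; since $T\to\infty$ and $T/n\to 0$, the uniform-in-$\xi$ concentration statement of Proposition~\ref{prop:pointwise-concentration} gives $\Prob(|I_1|>\varepsilon\mid\eta_{t-T})\overset{\P}{\to}0$, which boundedness upgrades to $\Expect_u[|I_1|]\to 0$. For $I_3$, a standard generator computation on a $d$-regular graph yields $\tfrac{d}{dt}\Expect_u[(B^n_t)^2]=(2/d)\Expect_u[|D^n_t|]\le 2m/d=n$, whence by the martingale property $\Expect_u[(\cB^n_t-\cB^n_{t-T})^2]\le T/n\to 0$; the $1$-Lipschitz continuity of $x\mapsto x(1-x)$ on $[0,1]$ then closes the estimate.

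The main step is controlling $I_2$. Using duality for the voter model run forward from $t-T$ to $t$, for each edge $e=\{x,y\}$,
\begin{equation*}
\Prob(\eta_t(x)\neq\eta_t(y)\mid\eta_{t-T})=\Prob(\tau^{x,y}_{\rm meet}>T)\,\Prob\bigl(\eta_{t-T}(V_T)\neq\eta_{t-T}(W_T)\,\big|\,\tau^{x,y}_{\rm meet}>T\bigr),
\end{equation*}
where $(V_T,W_T)$ denotes the time-$T$ position of the backward coalescing pair starting at $e$. Restricting the edge sum to ${\rm LTLE}(\tfrac{1}{5}\log_d n)$ edges costs $o(1)$ by Proposition~\ref{prop:geometry-rrg}(iii), and Proposition~\ref{lemma:meeting_near_particles} with $s=0$ yields $\Prob(\tau^{x,y}_{\rm meet}>T)=\theta_d+o_\P(1)$ uniformly over such edges. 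The crux is to show that, conditional on $\{\tau^{x,y}_{\rm meet}>T\}$, the law of $(V_T,W_T)$ lies within $o_\P(1)$ total variation of $\pi\otimes\pi$, the uniform product measure on $[n]^2$. Granted this, the second factor equals $2\cB^n_{t-T}(1-\cB^n_{t-T})+O(1/n)$ uniformly in $\eta_{t-T}$; averaging over $e$ then gives $I_2\overset{\P}{\to}0$, promoted to $L^1$ by boundedness.

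The hard part is this conditional joint-law bound, since the conditioning event has non-negligible probability $\theta_d$. I would split $\{\tau^{x,y}_{\rm meet}>T\}$ into (a) the two walks reach distance at least $\sigma_n$ within time $\sigma_n^3=o(T)$ -- of probability $\theta_d+o_\P(1)$, via the tree-coupling from the proofs of Lemmas~\ref{lemma:far_particles} and~\ref{lemma:meeting_near_particles} -- and (b) no meeting in the remaining $T-\sigma_n^3\gg T_{\rm mix}$ steps. On event (a) the coalescing and independent pairs coincide up to the separation time; during (b) each marginal walk mixes to $\pi$ in TV distance $o(1)$ by Proposition~\ref{prop:geometry-rrg}(iv), so the joint independent law becomes $o_\P(1)$-close to $\pi\otimes\pi$, and by Lemma~\ref{lemma:exponential_meeting_far_particles} the meeting probability during (b) is $O(T/n)=o(1)$, so removing the conditioning on non-meeting distorts the distribution negligibly. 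Tracking all these errors uniformly in both the edge $e$ and the configuration $\eta_{t-T}$ is the main technical bookkeeping challenge.
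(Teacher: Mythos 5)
Your decomposition and the paper's share the same skeleton: both apply the Markov property at a time $T$ slightly before $s_n\gamma_n$ (you take $T=\lceil\log^3 n\rceil$, the paper takes $\delta_n=2\log^2 n$) and both dispose of the fluctuation term $I_1$ by the uniform-in-$\xi$ concentration of Proposition~\ref{prop:pointwise-concentration} plus boundedness. The two routes then diverge in the remaining steps. For your $I_2$, the paper simply invokes the quoted estimate of Lemma~\ref{lemma:CCC} (i.e.\ \cite[Lemma 6.1]{CCC}), combined with $\Prob[\tau^e>\log^2 n]\to\theta_d$ and Proposition~\ref{prop:geometry-rrg}(iv); what you propose is essentially a from-scratch re-derivation of that lemma via duality — splitting the non-meeting event into an escape phase to distance $\sigma_n$ (probability $\theta_d+o_\P(1)$, by the tree coupling behind Proposition~\ref{lemma:meeting_near_particles}) and a mixing phase of length $\gg T_{\rm mix}$ during which the non-coalesced dual pair, being independent, comes within $o_\P(1)$ total variation of $\pi\otimes\pi$, with the residual meeting probability $O(T/n)$ controlled by Lemma~\ref{lemma:exponential_meeting_far_particles}. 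This plan is sound (the conditioning event has probability bounded away from zero, the dual trajectories are independent of $\eta_{t-T}$, so uniformity in the configuration is automatic, and all ingredients are in the paper), but it is exactly the bookkeeping the paper sidesteps by citation, and your write-up leaves it at the sketch level. For $I_3$ your argument is genuinely different and arguably cleaner: the paper passes through the functional convergence of $(\cB^n_{\gamma_n t})$ to the Fisher--Wright diffusion together with the Skorokhod-continuity facts of Appendix~\ref{app:aux}, whereas you use the elementary quantitative bound $\tfrac{d}{dt}\Expect_u[(B^n_t)^2]=\tfrac{2}{d}\Expect_u[|D^n_t|]\le n$, hence $\Expect_u[(\cB^n_t-\cB^n_{t-T})^2]\le T/n\to0$, plus the $1$-Lipschitz property of $x\mapsto x(1-x)$; this is deterministic in the graph, avoids the weak-convergence machinery entirely, and even gives a rate, at the price of not recycling the \cite{CCC} results the paper already needs elsewhere. (Both your proof and the paper's implicitly use $s_n\gamma_n\ge T$, i.e.\ that the limit of $s_n$ is positive, as in the application to Theorem~\ref{thm:beyondthemean}(ii); this is not a gap specific to your argument.)
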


Before proving this lemma, we conclude the proof of Theorem~\ref{thm:beyondthemean}.

\begin{proof}[Proof of Theorem~\ref{thm:beyondthemean} (ii)]
	Recall that we are assuming that $t_n/n \to s$. Consider $s_n=t_n/\gamma_n$ and note that 
	$\lim_{n\to\infty}s_n = 2s\,\theta_{d}$,
	due to~\eqref{eq:asymptotics_gamma}. Hence Lemma~\ref{lemma:l1_estimate} yields
	\begin{equation}
		\label{eq:l1_estimate_proof}
		\Expect_{u} \Big[ \Big|\cD_{t_{n}} - 2\theta_{d}\cB^{n}_{t_{n}}(1-\cB^{n}_{t_{n}}) \Big| \Big]  
		\overset{\P}{\longrightarrow} 0.
	\end{equation}
	Moreover, since $x \mapsto x(1-x)$ is continuous and so is the Fisher-Wright diffusion $(\bar\cB_{t})_{t\geq 0}$,
	Proposition~\ref{prop:convergence} implies that
	\begin{equation*}
		2\theta_{d}\cB^{n}_{t_{n}}(1-\cB^{n}_{t_{n}}) \quad \text{ converges in distribution to } \quad 
		2\theta_{d}\bar\cB_{2s\,\theta_{d}}(1-\bar\cB_{2s\,\theta_{d}}).
	\end{equation*}
	In view of~\eqref{eq:l1_estimate_proof}, the same holds \emph{whp} for $\cD_{t_{n}}$, which concludes the proof.
\end{proof}

To prove Lemma~\ref{lemma:l1_estimate}, we use the following concentration estimate derived in \cite{CCC}. 
\begin{lemma}[Concentration on arbitrary connected graphs]
	\label{lemma:CCC}
	Fix a finite connected graph $G$ on $n$ vertices and $m$ edges and let $(\eta_{t})_{t \geq 0}$ denote the voter model evolving on $G$. For any $0 \leq s \leq t$,
	\begin{equation*}
		\begin{split}
			\sup_{\eta_{0}} \Big| \Expect_{\eta_{0}}[\cD_{t}]-2\Prob [\tau^{e}>s]\frac{1}{n^2}\sum_{x}\eta_{0}(x) & \sum_{x}(1-\eta_{0}(x)) \Big| \\
			& \leq \Prob \big[ \tau^{e} \in (s, t] \big] + 4\Prob \big[ \tau^{e}>s \big] d_{\rm TV}(t-s),
		\end{split}
	\end{equation*}
	where $\tau^{e}$ denotes the meeting time of two random walks starting from the two vertices at the end of a uniformly chosen edge of $G$, and $d_{TV}$ is defined as in~\eqref{TV}.
\end{lemma}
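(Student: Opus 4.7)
The plan is to combine the duality between the voter model and independent random walks (valid before they meet) with a single mixing-time argument after a Markov splitting at an intermediate time $s$. By duality, for a uniformly chosen edge $e=(x,y)$,
\[
\Expect_{\eta_{0}}[\cD_{t}] = \Prob\bigl(\tau^{e}>t,\,\eta_{0}(X^{x}_{t})\neq\eta_{0}(Y^{y}_{t})\bigr),
\]
where $X^{x}$ and $Y^{y}$ are two independent rate-one walks from $x$ and $y$: before $\tau^{e}=\tau^{x,y}_{\rm meet}$ the coalescing dual has the same law as the independent pair, and on $\{\tau^{e}>t\}$ the discordance indicator reduces to reading $\eta_{0}$ at two distinct time-$t$ positions.

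First I would split the event at $s$:
\[
\Prob\bigl(\tau^{e}>t,\,\eta_{0}(X^{x}_{t})\neq\eta_{0}(Y^{y}_{t})\bigr) = \Prob\bigl(\tau^{e}>s,\,\eta_{0}(X^{x}_{t})\neq\eta_{0}(Y^{y}_{t})\bigr) - \Prob\bigl(s<\tau^{e}\leq t,\,\eta_{0}(X^{x}_{t})\neq\eta_{0}(Y^{y}_{t})\bigr).
\]
The second summand is trivially bounded by $\Prob(\tau^{e}\in(s,t])$, yielding the first error term in the lemma.

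Next, to handle the first summand, I would apply the Markov property at time $s$. On $\{\tau^{e}>s\}$ the two walks have not met, and conditionally on $\cF_{s}$ they continue to evolve as two independent random walks for a further time $t-s$, so the joint law of $(X^{x}_{t},Y^{y}_{t})$ on this event factorises through the kernels $P^{t-s}(X^{x}_{s},\cdot)$ and $P^{t-s}(Y^{y}_{s},\cdot)$. Writing these as $\pi+\varepsilon_{u}$ and $\pi+\varepsilon_{v}$ with $\pi$ the uniform stationary distribution and $|\varepsilon_{u}(A)|\leq d_{\rm TV}(t-s)$ for every $A$, I would expand the sum $\sum_{a,b:\,\eta_{0}(a)\neq\eta_{0}(b)}P^{t-s}(X^{x}_{s},a)P^{t-s}(Y^{y}_{s},b)$ into four pieces. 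The leading piece produces
\[
\Prob(\tau^{e}>s)\sum_{a,b:\,\eta_{0}(a)\neq\eta_{0}(b)}\pi(a)\pi(b) = \Prob(\tau^{e}>s)\cdot\frac{2}{n^{2}}\sum_{x}\eta_{0}(x)\sum_{x}\bigl(1-\eta_{0}(x)\bigr),
\]
which is the quantity on the left-hand side of the claim.

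The main obstacle, and the most delicate step, is the bookkeeping of the three remaining cross pieces, which must collectively be absorbed into $4\,\Prob(\tau^{e}>s)\,d_{\rm TV}(t-s)$. The key observation is the zero-mass property $\sum_{a}\varepsilon_{u}(a)=0$: it lets me estimate each sum of the form $\sum_{a:\,\eta_{0}(a)=i}\varepsilon_{u}(a)$ as a \emph{single} total-variation evaluation bounded by $d_{\rm TV}(t-s)$, rather than two. Each of the two linear cross pieces is then at most $\Prob(\tau^{e}>s)\,d_{\rm TV}(t-s)$, and the quadratic piece at most $2\,\Prob(\tau^{e}>s)\,d_{\rm TV}(t-s)^{2}\leq 2\,\Prob(\tau^{e}>s)\,d_{\rm TV}(t-s)$; summing gives exactly the constant $4$ in the lemma. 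Since every bound above is uniform in $\eta_{0}$, taking the supremum over $\eta_{0}\in\{0,1\}^{V}$ finishes the proof.
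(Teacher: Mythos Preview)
Your proposal is correct. The paper itself does not give a proof of this lemma but simply cites \cite[Lemma~6.1]{CCC}; your argument---duality to reduce to independent walks, splitting at the intermediate time $s$, and a total-variation expansion of the time-$(t-s)$ kernels around $\pi$ with the zero-mass cancellation to get the constant $4$---is the standard route and is presumably what is done there. The bookkeeping of the cross terms is accurate, and the bounds are uniform in $\eta_{0}$ as required.
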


\noindent
The proof of Lemma \ref{lemma:CCC} is provided in~\cite[Lemma 6.1]{CCC}. Note that it applies to any deterministic connected graph on which the voter dynamics takes place. 

We will also need the asymptotic estimate
\begin{equation}
	\label{eq:meeting_log^2}
	\Prob \big[\tau^{e} \geq k\log^{2} n \big]  \overset{\P}{\longrightarrow} \theta_{d}, \qquad k \in \N,
\end{equation}
which follows from Proposition \ref{lemma:meeting_near_particles}.

\begin{proof}[Proof of Lemma~\ref{lemma:l1_estimate}]
	Fix $\delta_{n} = 2\log^{2} n$, and split
	\begin{equation}
		\label{eq:estimate_1}
		\begin{aligned}
			&\Expect_{u} \Big[ \Big|\cD_{s_{n} \gamma_{n}} - 2\theta_{d}\cB_{s_{n} \gamma_{n}}
			(1-\cB_{s_{n} \gamma_{n}}) \Big| \Big] 
			\leq \Expect_{u} \Big[ \Big|\cD_{s_{n} \gamma_{n}} - 2\theta_{d}\cB_{s_{n} 
				\gamma_{n}-\delta_{n}}(1-\cB_{s_{n} \gamma_{n}-\delta_{n}}) \Big| \Big] \\
			& \qquad + 2\theta_{d} \Expect_{u} \Big[ \Big|\cB_{s_{n} \gamma_{n}-\delta_{n}}
			(1-\cB_{s_{n} \gamma_{n}-\delta_{n}})
			- \cB_{s_{n} \gamma_{n}}(1-\cB_{s_{n} \gamma_{n}}) \Big| \Big].
		\end{aligned}
	\end{equation}
	We will show that each of the expectations in the right-hand side above converges to zero. 
	
	\medskip\noindent
	{\bf 1.}
	Apply the Markov property at time $s_n\gamma_n-\delta_n$, to obtain
	\begin{equation}
		\label{eq:estimate_2}
		\begin{aligned}
			&\Expect_{u} \Big[ \Big|\cD_{s_{n} \gamma_{n}} - 
			2\theta_{d}\cB_{s_{n} \gamma_{n}-\delta_{n}}(1-\cB_{s_{n} \gamma_{n}-\delta_{n}}) \Big| \Big] 
			\leq \sup_{\eta_{0}}\Expect_{\eta_{0}} \Big[ \Big|\cD_{\delta_{n}} - 2\theta_{d}\cB_{0}
			(1-\cB_{0}) \Big| \Big] \\
			& \leq \sup_{\eta_{0}} \Expect_{\eta_{0}} \Big[ \big|\cD_{\delta_{n}} - \Expect_{\eta_{0}}[\cD_{\delta_{n}}] \big| \Big] 
			+ \sup_{\eta_{0}} \Big|\Expect_{\eta_{0}}[\cD_{\delta_{n}}] - 2\theta_{d}\frac{1}{n^{2}}\sum_{x}\eta_{0}(x) 
			\sum_{x}(1-\eta_{0}(x)) \Big|.
		\end{aligned}
	\end{equation}
	The first expectation in the right-hand side of~\eqref{eq:estimate_2} converges to zero by Proposition~\ref{prop:pointwise-concentration} and the fact that $|\cD_{\delta_{n}} - \Expect_{\eta_{0}}[\cD_{\delta_{n}}]|$ is bounded by one. To control the second expectation in \eqref{eq:estimate_2}, apply Lemma~\ref{lemma:CCC} with $t=\delta_{n}= 2\log^{2} n$ and $s=\log^{2} n$, to obtain
	\begin{equation*}
		\begin{split}
			& \sup_{\eta_{0}} \Big|\Expect_{\eta_{0}}[\cD_{\delta_{n}}] - 2\theta_{d}\frac{1}{n^{2}}\sum_{x}\eta_{0}(x) \sum_{x}(1-\eta_{0}(x)) \Big| \\ 
			& \leq \sup_{\eta_{0}} \Big|\Expect_{\eta_{0}}[\cD_{\delta_{n}}] - 2\Prob \big[\tau^{e} \geq \log^{2} n \big]\frac{1}{n^{2}}
			\sum_{x}\eta_{0}(x) \sum_{x}(1-\eta_{0}(x)) \Big| + 2\Big|\Prob \big[\tau^{e} \geq \log^{2} n \big] - \theta_{d} \Big| \\
			& \leq  \Prob \big[ \tau^{e} \in (\log^{2} n, 2\log^{2} n] \big] + 4\Prob \big[ \tau^{e}>\log^{2} n \big] 
			\textnormal{d}_{TV}(\log^{2} n)+ 2\Big|\Prob \big[\tau^{e} \geq \log^{2} n \big] - \theta_{d} \Big|.
		\end{split}
	\end{equation*}
	Note that both
	\begin{equation*}
		\Prob \big[ \tau^{e} \in (\log^{2} n, 2\log^{2} n] \big] = \Prob \big[ \tau^{e} > 2\log^{2} n \big] - \Prob \big[ \tau^{e} > \log^{2} n \big]
	\end{equation*}
	and 
	\begin{equation*}
		\Big|\Prob \big[\tau^{e} \geq \log^{2} n \big] - \theta_{d} \Big|
	\end{equation*}
	converge to zero \emph{whp} by~\eqref{eq:meeting_log^2}, while $d_{\rm TV}(\log^{2} n)$ goes to zero thanks to Proposition \ref{prop:geometry-rrg} (iv). Hence also the second expectation in~\eqref{eq:estimate_2} vanishes, and so we have
	\begin{equation}
		\label{eq:estimate_3}
		\Expect_{u} \Big[ \Big|\cD_{t_{n} \gamma_{n}} - 2\theta_{d}\cB_{t_{n} 
			\gamma_{n}-\delta_{n}}(1-\cB_{t_{n} \gamma_{n}-\delta_{n}}) \Big| \Big]  \overset{\P}{\longrightarrow}  0.
	\end{equation}
	
	\medskip\noindent
	{\bf 2.} It remains to show that the second term in the right-hand side of Eq.~\eqref{eq:estimate_1} vanishes \emph{whp}, for which we argue by continuity in the proper topology. For an arbitrary $T>0$, let $[0,T] \times \mathcal{D}[0,T]$ denote the product of $[0,T]$ with the c\`adl\`ag space $\mathcal{D}[0,T]$ endowed with the $J_1$-Skorohod topology. For a given $s>0$ and an arbitrary evaluation function $h\colon\, [0,T] \times \mathcal{D}[0,T] \to \R$ defined via $h(t, \phi) = \phi(t)$, we consider the incremental function $\tilde{h}\colon\, [0,T]^{2} \times \mathcal{D}[0,T] \to \R$ given by
	\begin{equation*}
		\tilde{h}(t, u, \phi) = |h(t, \phi)-h(u, \phi)|,
	\end{equation*}
	and note that, by continuity of the modulus function and  Lemma~\ref{lemma:continuity}, all points in $[0,T]^{2} \times \mathcal{C}[0,T]$ are continuity points of $\tilde{h}$. In particular, since $\frac{\delta_{n}}{\gamma_{n}} \to 0$ and with $s=\lim_{n\to\infty}s_{n}<T$,  we have that  
	
	\begin{equation}
		\begin{aligned}
			&\Expect_{u} \Big[ \Big|\cB_{s_{n} \gamma_{n}}
			(1-\cB_{s_{n} \gamma_{n}}) -\cB_{s_{n} \gamma_{n}-\delta_{n}}
			(1-\cB_{s_{n} \gamma_{n}-\delta_{n}}) \Big| \Big]
			= \Expect_{u} \Big[ \Big|h\big( s_{n}, (\cB_{u \gamma_{n}}(1-\cB_{u \gamma_{n}}))_{u \in [0,T]} \big)\\
			&- h\big( s_{n}-\frac{\delta_{n}}{\gamma_{n}}, (\cB_{u \gamma_{n}}
			(1-\cB_{u \gamma_{n}}))_{u \in [0,T]} \big)\Big| \Big]  
			= \Expect_{u} \Big[ \tilde{h}\big( s_{n}, s_n-\frac{\delta_{n}}{\gamma_{n}} ,(\cB_{u \gamma_{n}}
			(1-\cB_{u \gamma_{n}}))_{u \in [0,T]} \big)\Big]\\
			& \label{eq:estimate_4}
			\overset{\P}{\longrightarrow}  \Expect_{u} \Big[ \tilde{h}\big( s ,s,(\bar\cB_{u }(1-\bar\cB_u))_{u\in [0,T]} \big)\Big]  =0,
		\end{aligned}
	\end{equation}
	where the limit is justified by~\eqref{eq:dB} and Proposition~\ref{prop:convergence} .
	
	\medskip\noindent
	{\bf 3.}
	Combine~\eqref{eq:estimate_1},~\eqref{eq:estimate_3}, and~\eqref{eq:estimate_4}, to conclude the proof.
\end{proof}


\section{Uniform concentration}
\label{sec:concentration}

In this section we prove Theorem \ref{th:unif-concentration}, i.e., we sharpen the result in Theorem \ref{thm:beyondthemean}.1 to a uniform bound over sublinear times up to the scale $n^{1-o(1)}$. Note that, up to a union bound on the set of jump times for the process $\cD_t^n$, the proof amounts to showing the following.

\begin{proposition}[Strengthening of the pointwise concentration]
	\label{prop:unif-concentration}
	Consider the voter model with initial density $u\in(0,1)$ on a regular random graph $G_{d,n}(\omega)$. For any fixed $\varepsilon,\delta,a>0$, 
	\begin{equation*}
		\ind\left\{ \max_{t\le n^{1-\varepsilon}} \Prob_u\left(\left|\cD_t- \Expect_u[\cD_t]\right|>\delta\right)  \le n^{-a} \right\} 
		\overset{\P}{\longrightarrow} 1.
	\end{equation*}
\end{proposition}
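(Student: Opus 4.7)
The plan is to prove Proposition \ref{prop:unif-concentration} via a moment method. A polynomial bound of the form $n^{-a}$ can be obtained by controlling the $2k$-th central moment of $|D_t|$ for a sufficiently large integer $k=k(a,\varepsilon)$ and then applying Markov's inequality. Theorem \ref{th:unif-concentration} then follows from Proposition \ref{prop:unif-concentration} by a union bound over the jump times of $(\cD_t)_{t\ge 0}$ in $[0,n^{1-\delta}]$, whose total number is at most $O(n^{2-\delta})$ with high probability (this is controlled by standard Poisson concentration, since $\cD_t$ can jump only when a Poisson clock rings, at total rate $n$).

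The first step is a dual representation of the moment. Writing $|D_t|=\sum_{e\in E}\mathbf{1}_{e\in D_t}$ and $Y_e=\mathbf{1}_{e\in D_t}-\Prob_u(e\in D_t)$, we have
\[
\Expect_u\Big[(|D_t|-\Expect_u[|D_t|])^{2k}\Big]
=\sum_{(e_1,\dots,e_{2k})\in E^{2k}}\Expect_u\Big[\prod_{i=1}^{2k}Y_{e_i}\Big],
\]
and, via the graphical construction, each term in the sum is determined by a single system of $4k$ coalescing backward random walks issuing from the endpoints of $e_1,\dots,e_{2k}$, together with the i.i.d.\ ${\rm Ber}(u)$ initial opinions. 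Extending the negative-correlation argument used in Proposition \ref{prop:pointwise-concentration}, the crucial observation is that if no walk issuing from $e_i$ meets any walk issuing from $e_j$ before time $t$ for any $i\neq j$, then the variables $\{Y_{e_i}\}_{i\le 2k}$ are jointly independent (the walks, together with the opinions at their distinct terminal vertices, split into $2k$ independent blocks), so the term vanishes by centering.

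The surviving contribution therefore comes from tuples for which the interaction graph $H$ on $\{1,\dots,2k\}$ --- with an edge between $i$ and $j$ whenever some walk attached to $e_i$ meets some walk attached to $e_j$ within time $t$ --- has no isolated vertex. Bounding each surviving term trivially by $1$, the task reduces to counting such tuples. Here I would invoke Lemma \ref{lemma:exponential_meeting_far_particles} together with an Aldous--Brown-type first-moment bound (as indicated in the description of techniques): for any fixed pair of edges $e,e'$ with ${\rm dist}(e,e')\ge \sigma_n$, the probability that some walk from $e$ meets some walk from $e'$ before time $t\le n^{1-\varepsilon}$ is $O(t/n)=O(n^{-\varepsilon})$ $\whp$, while pairs of edges at distance less than $\sigma_n$ number at most $m\,d^{O(\sigma_n)}=m\,n^{o(1)}$ and contribute negligibly. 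Since $H$ having no isolated vertex forces the existence of a matching of size $k$ on $\{1,\dots,2k\}$, counting first the matching (at most $(2k-1)!!$ choices, absorbed into a constant for $k$ fixed) and then the paired edges one obtains a bound of the form
\[
\Expect_u\Big[(|D_t|-\Expect_u[|D_t|])^{2k}\Big]\le C_k\, m^{2k}\, n^{-\varepsilon k}\quad \whp,
\]
which, via Markov, gives $\Prob_u(|\cD_t-\Expect_u[\cD_t]|>\delta)\le C_k\delta^{-2k}n^{-\varepsilon k}\le n^{-a}$ once $k\ge a/\varepsilon$ and $n$ is large, uniformly in $t\le n^{1-\varepsilon}$.

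The main technical obstacle is the rigorous control of correlations in the surviving tuples when the interaction graph $H$ has vertices of high degree: once several of the $4k$ walks have cross-met, the approximate pairwise independence used above breaks down and the simple matching-based count needs justification. I would handle this iteratively: condition on the first cross-meeting among the $4k$ walks, merge the two colliding walks into one (as in the coalescing representation), and apply the strong Markov property together with the meeting-time estimates of Section \ref{sec:exp} to the remaining system. Since the total number of walks stays $O(k)$ throughout, each conditioning step costs a multiplicative factor $O(n^{-\varepsilon})$, and the accumulated loss over at most $k$ steps yields the factor $n^{-\varepsilon k}$ claimed above.
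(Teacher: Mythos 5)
There is a genuine gap, in two places, and both sit exactly where the real difficulty of the proposition lies. First, your claim that on the no-interaction event the centered variables $Y_{e_1},\dots,Y_{e_{2k}}$ are ``jointly independent\dots so the term vanishes by centering'' is not correct: the no-meeting event is itself a function of all $4k$ trajectories jointly, so inserting its indicator (or conditioning on it) destroys the product structure. This is precisely why the paper's negative-correlation statements (the display \eqref{negCorrPair} and Lemma \ref{lemma:nega-dep}) are only one-sided inequalities for \emph{indicator} events, dominating by a Binomial; with centered factors, which change sign, no such domination gives cancellation, and the terms whose interaction graph has isolated vertices do not vanish --- they are merely small, and quantifying how small requires a coupling with fully independent walk systems whose error term per tuple is of order the interaction probability, i.e.\ essentially the same estimate you were hoping to avoid. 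Relatedly, ``bounding each surviving term trivially by $1$ and counting such tuples'' does not parse: every tuple of edges has positive probability of producing an interaction graph with no isolated vertex, so the correct bound per tuple is the \emph{probability} of that event, and the whole problem is to decorrelate that probability across the $\geq k$ matched pairs.

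Second, your iterative conditioning step (``condition on the first cross-meeting, merge, apply the strong Markov property and the meeting-time estimates of Section \ref{sec:exp}; each step costs $O(n^{-\varepsilon})$'') is unsupported: the short-time bound $\Prob_{\pi\otimes\pi}(\tau_{\rm meet}\le t)=O(t/n)$ from the Aldous--Brown inequality \eqref{eq:ccc3} requires a stationary product start, and after conditioning on earlier meetings the relevant walks are neither stationary nor far apart (two walks that have just interacted can be adjacent and then meet in time $O(1)$ with constant probability), so neither \eqref{eq:ccc2} nor Lemma \ref{lemma:exponential_meeting_far_particles} applies at the later steps. The paper's proof avoids exactly this trap by changing the architecture: it subsamples $K_n=\log^2 n$ edges i.i.d.\ from $\nu$, compares $\cD_t$ with the subsample average $\cD_t^A$ by a conditional Chernoff bound (which is where the superpolynomial $n^{-a}$ comes from, since the deviation probability is $\exp(-\Theta(\log^2 n))$), and then, in Lemma \ref{lemma:concentration_crw}, exploits that in at least half of the interaction steps one of the two interacting edges is still \emph{unsampled}, hence its endpoints are uniform and independent of the past; even so, it only obtains a per-step factor of order $n^{\varepsilon/2}t/n$ for those steps, together with a union bound over the Poisson jump times. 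Your fixed, deterministic $2k$-tuple of edges has no such freshness to exploit, so the claimed per-step $O(n^{-\varepsilon})$ cost, and hence the bound $C_k m^{2k} n^{-\varepsilon k}$ on the $2k$-th moment, is not justified as written. (Your reduction of Theorem \ref{th:unif-concentration} to the proposition via a union bound over jump times is fine and matches the paper.)
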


\begin{proof}	
	The proof comes in several steps. To ease the reading, we drop the subindex $u$ from the notation of the voter measure $\Prob_{u}$ and abbreviate $p_t=\Expect[\cD_t]$.
	
	\medskip\noindent
	{\bf 1.}
	Fix
	\begin{equation*}
		t\le  n^{1-\varepsilon},\qquad  K_n=\log^2 n,
	\end{equation*}
	and let $A$ be a collection of $K_{n}$ edges of the graph sampled uniformly at random, independently of the voter dynamics. Call
	\begin{equation*}
		\cD^A_t\coloneqq\frac{1}{K_n}\sum_{e\in A}\ind_{\{e\in D_t\}}
	\end{equation*}
	the fraction of edges in the random set $A$ that are discordant at time $t$. By the triangle inequality, we have
	\begin{equation}
		\label{aaa}
		\Prob\left(\left|\cD_t- p_t\right|>\delta\right) \le \Prob\left(\left|\cD_t - \cD_t^A\right|>\tfrac12\delta\right) 
		+ \Prob\left(\left| \cD_t^A- p_t\right|>\tfrac12\delta\right).
	\end{equation}
	Note that, given $\cD_t=q$, the quantity $\cD_t^A$ is distributed as $Z/K_n$, where $Z\sim {\rm Bin}(K_n,q)$. Therefore, conditioning on $\cD_t=q\in[0,1]$, we can bound the first term on the right-hand side of \eqref{aaa} by means of the Chernoff bound
	\begin{equation}
		\label{bbb}
		\Prob\left(\left|\cD_t - \cD_t^A\right|>\tfrac12\delta\:\bigg\rvert\: \frac{|D_t|}{m}=q\right)
		\le 2\exp\left(-\frac{\delta^2}{12 q}K_n \right)\leq 2\exp\left(-\frac{\delta^2}{12}K_n\right).
	\end{equation}
	Since the bound in \eqref{bbb} is uniform over $q\in[0,1]$ and $t\le n^{1-\varepsilon}$, we conclude that, for every $a>0$, $\P$-a.s.,
	\begin{equation}\label{eq:Dt-A}
		\max_{t\le n^{1-\varepsilon}}\Prob\left(\left|\cD_t- \cD_t^A\right|>\tfrac12\delta\right) 
		\le  2 \exp\left(-\frac{\delta^2}{12} K_n\right)\le n^{-a},
	\end{equation}
	for all $n$ sufficiently large.
	
	\medskip\noindent
	{\bf 2.}
	We next show that a similar bound holds for the second term on the right-hand side of \eqref{aaa}. The latter will be proved by means of duality. Consider a system of $n$ independent random walks starting from the different vertices of $G$ and evolving independently. We consider a (multi)sub-set of $2K_n$ random walks starting at the extremes of the edges in the random set $A$. Note that these are distributed at time zero as $\otimes_{K_n}\nu$, where $\nu$ is the probability distribution on $[n]^2$ defined as
	\begin{equation*}
		\nu(x,y)=\pi(x)\frac{1}{d}\ind_{x\sim y},\qquad (x,y)\in[n]^2.
	\end{equation*}
	Moreover, since $\pi\equiv \frac{1}{n}$, the two marginal distributions of $\nu$ coincide with the stationary distribution, i.e., for every $x\in[n]$,
	\begin{equation*}
		\sum_{z\in[n]} \nu(x,z) = \sum_{z\in[n]}\nu(z,x)=\frac{1}{n}.
	\end{equation*}
	Observe that
	\begin{equation}
		\label{lll}
		p_t=\Expect[\cD_t]=\frac{1}{m}\sum_{e\in E}\Prob(e\in D_t)=\Prob_\nu(e\in D_t).
	\end{equation}
	In order to simplify the notation, when considering a system of $2K$ independent random walks starting at $\otimes_{K_n}\nu$, for each of the $K_n$ random edges $A=\{e_1,\dots, e_{K_n} \}$, we label the extremes as $e_j^-$ and $e_j^+$, for all $j\in\{1,\dots,K_n \}$. For $i,j\le K_n$, define the quantities
	\begin{equation}\label{eq:tef}
		\tau^{e_i,e_j}\coloneqq\tau_{\rm meet}^{e_i^-,e_j^-}\wedge \tau_{\rm meet}^{e_i^+,e_j^-}\wedge 
		\tau_{\rm meet}^{e_i^-,e_j^+}\wedge \tau_{\rm meet}^{e_i^+,e_j^+}.
	\end{equation}
	When $\tau^{e_i,e_j}\le t$ we say that \emph{the edges $i$ and $j$ interact before $t$}. We say that $\tau^{e_i,e_j}=0$ when $\{e_i^-,e_i^{+} \}\cap \{e_j^-,e_j^+ \}\neq\emptyset$. Note that, for all $i,j\le K_n$ and $t\ge0$,
	\begin{equation}
		\label{ccc}
		\Prob_{\otimes_{K_n}\nu}(\tau^{e_i,e_j}\le t) = \Prob_{\nu\otimes\nu}(\tau^{e_1,e_2}\le t) 
		\le 4 \Prob_{\pi\otimes \pi}(\tau_{\rm meet}^{x,y} \le t).
	\end{equation}
	Indeed, the inequality in \eqref{ccc} follows from the definition of $\nu$ and a union bound.
	
	We next argue that for all $t=o(n)$ the probability on the right-hand side of \eqref{ccc} can be bounded above $\whp$ by
	\begin{equation}
		\label{eq:ccc2}
		\Prob_{\pi\otimes \pi}(\tau_{\rm meet}^{x,y} \leq t) = O\left(\frac{t}{n}\right).
	\end{equation}
	Let us start by pointing out that the FVTL is not enough to deduce \eqref{eq:ccc2}. Indeed, the FVTL is particularly suited to have sharp estimates of the right tail of the hitting time of a set on time scales that are \emph{at least} as big as the mean hitting time. For this reason, we will exploit a result in \cite{AB1} that in a sense complements the FVTL on short time scales (see also \cite[Lemma 2.11]{hermon2021mean}).
	
	\begin{lemma}
		For an irreducible and reversible continuous-time Markov chain with stationary distribution $\mu$ and any subset of states $A$,
		\begin{equation}
			\label{eq:ccc3}
			\left|\Prob(\tau^\mu_A\le t)-\frac{t}{\Expect[\tau^\mu_A]} \right|\le \left(\frac{t}{\Expect[\tau^\mu_A]} \right)^2 
			+ \frac{1}{\lambda_\star\:\Expect[\tau^\mu_A]},\qquad0\le  t\le \Expect[\tau^\mu_A],
		\end{equation}
		where $\tau_A^\mu$ is the hitting time of the set $A$ when starting with distribution $\mu$, and $\lambda_{\star}$ denotes the minimal non-trivial eigenvalue of the infinitesimal generator of the process.
	\end{lemma}
	In our setting we can think of the Markov chain as the simple random walk on $G\otimes G$ and of $A$ as the diagonal, so that $\mu=\pi\otimes\pi$, $\lambda_{\star}\in(0,1)$ and $\Expect[\tau^\mu_A]=\Expect[\tau^{\pi\otimes\pi}_\Delta]=\Theta(n)$, where the latter follows by Proposition \ref{lemma:firstvisitDelta}. Note that, as soon as $t/n\to 0$, we have that the term on the right-hand side of \eqref{eq:ccc3} is $O(t/n)$, as for $t/\Expect[\tau^\mu_A]$, so that \eqref{eq:ccc2} immediately follows. Finally, as already pointed out in the second step of the proof of Proposition \ref{lemma:firstvisitDelta}, the fact that $\lambda_{\star}$ is bounded away from $0$ and $1$ as $n\to\infty$ has been proved by \cite{friedman2008proof}.
	
	The following lemma states that it is unlikely to have within $A$ a density of edges interacting before time $t$.
	
	\begin{lemma}
		\label{lemma:concentration_crw}
		Consider a system of $n$ independent random walks, each starting from a different vertex of a regular random graph $G_{d,n}(\omega)$. Select a sub(multi)set $A$ of $E$ of size $2K_n$ with joint distribution $\otimes_{K_n}\nu$. Fix $\varepsilon>0$ and $\gamma\in (0, 1)$, $t\le n^{1-\varepsilon}$ and $K_n=\log^2 n$. 
		Let $\mathcal{I}\subset A$ be the maximal subset such that
		\begin{equation*}
			\forall e\in \mathcal{I},\, \exists f\in A\setminus\{ e\}\text{ s.t. }\,\tau^{e,f}\le t\,,
		\end{equation*}
		and call
		\begin{equation*}
			\cE_{t,\gamma}:= \left\{|\mathcal{I}|>\gamma K_n\right\},
		\end{equation*}
		where $\tau^{e,f}$ is defined as in \eqref{eq:tef}. Then, for all $a>0$, 
		\begin{equation*}
			\Prob_{\otimes_{K_n} \nu}\left(\cE_{t,\gamma}\right) \le n^{-a} \quad \whp.
		\end{equation*}
	\end{lemma}
	
	Before proving Lemma \ref{lemma:concentration_crw}, we show how the latter can be exploited to conclude the proof of Proposition \ref{prop:unif-concentration}.
	
	\medskip\noindent
	{\bf 3.}	
	To conclude the proof of Proposition \ref{prop:unif-concentration}, we argue as follows. Let 
	\begin{equation}\label{H}
		\cH_t=\{| \cD^A_t-p_t|> \delta\}
	\end{equation}
	be the event of interest. Consider a system of $2K_n$ independent random walks starting at the extremes of the vertices in $A$, and let these evolve up to time $t$. Recall that $\mathcal{I} \subset A$ denotes the set of edges in $A$ interacting within time $t$ with other edges in $A$. Consider the event $\cE_{t, \gamma}$ in Lemma \ref{lemma:concentration_crw} for which we know that, for every constants $a,\gamma>0$, \whp
	\begin{equation*}
		\Prob(\cE_{t,\gamma})\le n^{-a}.
	\end{equation*}
	Therefore
	\begin{equation}\label{eq:Ht}
		\Prob\left(\cH_t\right) \le \Prob(\cH_t\cap \cE^c_{t,\gamma})+n^{-a}.
	\end{equation}
	In order to conclude the proposition, it suffices to bound the probability on the right-hand side of the above display.
	
	\medskip\noindent
	{\bf 4.}
	In order to get the desired bound we next show negative correlation for the discordancy of a collection of edges, in the spirit of the proof of Proposition \ref{prop:pointwise-concentration}.
	
	\begin{lemma}[Negative correlation under no-meeting events]
		\label{lemma:nega-dep}
		Consider a sub(multi)set of edges $A_k=\{e_1,\dots,e_k \}$ sampled according to $\otimes_k \nu$, and let $\cG_{t,k}$ be the event in which the $2k$ independent random walks starting at the extremes of the $k$ edges never meet before time $t$. Then, for all $j\in\{0,1,2,\dots,k\}$,
		\begin{equation*}
			\Prob_{\otimes_k\nu}\left(\text{Exactly $j$ of the $k$ edges are discordant at time $t$},\:\cG_{t,k} \right) \leq \Pr\left({\rm Bin}(k,p_t)=j \right).
		\end{equation*}
	\end{lemma}
	
	\begin{proof}
		For $e=(x,y)\in A_k$, call $\sigma_e$ a realisation of length $t$ of the two independent trajectories starting at $x$ and $y$. For $2\le j\le k$, call $\cH(\sigma_1,\dots,\sigma_{j-1})$ the set of the possible realisations of $\sigma_j$ that never meet the $2(j-1)$ trajectories $\sigma_1,\dots,\sigma_{j-1}$. Observe that
		\begin{equation*}
			\label{iii}
			\begin{split}
				&\Prob_{\otimes_k\nu}(e_i\in D_t,\forall i\le j,\:\:e_i\not\in D_t,\forall i> j,\:\cG_{t,k})\\
				&=\sum_{\sigma_1\in \{e_1\in D_t \}}\sum_{\sigma_2\in \{e_2\in D_t \}\cap\cH(\sigma_1)}
				\cdots\sum_{\sigma_k\in\{e_k\not\in D_t\}\cap \cH(\sigma_1,\dots,\sigma_{k-1})}
				\prod_{j=1}^k \Prob\left(\sigma_i\mid \cap_{j<i} \sigma_j \right)\\
				&=\sum_{\sigma_1\in \{e_1\in D_t \}}\sum_{\sigma_2\in \{e_2\in D_t \}\cap\cH(\sigma_1)}
				\cdots\sum_{\sigma_k\in\{e_k\not\in D_t\}\cap \cH(\sigma_1,\dots,\sigma_{k-1})}
				\prod_{j=1}^k \Prob\left(\sigma_i\right)\\
				&\le\sum_{\sigma_1\in \{e_1\in D_t \}}\sum_{\sigma_2\in \{e_2\in D_t \}}\cdots\sum_{\sigma_k\in\{e_k\not\in D_t\}}
				\prod_{j=1}^k \Prob\left(\sigma_i\right)\\
				&=\prod_{i=1}^j \Prob_{\otimes_k\nu}(e_i\in D_t)\prod_{i=j+1}^k \Prob_{\otimes_k\nu}(e_i\not\in D_t)\\
				&= p_t^j\left(1-p_t \right)^{k-j},
			\end{split}
		\end{equation*} 
		where in the last equality we use the identity in \eqref{lll}. By the product form of $\otimes_k\nu$ the result immediately follows.
	\end{proof}
	
	\medskip\noindent
	{\bf 5.}
	We next show how to exploit Lemma \ref{lemma:nega-dep} to conclude that 
	\begin{equation}
		\label{eq:finalgoal}
		\mathbf{P}(\cH_t\cap \cE_{t,\gamma}^c)\le n^{-a}, \quad \text{for all } a \geq 0,
	\end{equation}
	which implies, thanks to  \eqref{aaa}, \eqref{eq:Dt-A} and \eqref{eq:Ht}, the validity of Proposition \ref{prop:unif-concentration}.
	
	First of all, we split the event $\cH_t\cap \cE_{t,\gamma}^c$ into two parts, depending on the sign of the deviation:
	\begin{equation}
		\label{eq:split-dev}
		\mathbf{P}(\cH_t\cap \cE_{t,\gamma}^c)=\mathbf{P}(\cD_t^A-p_t>\delta,\,\cE_{t,\gamma}^c)+\mathbf{P}(p_t-\cD_t^A>\delta,\,\cE_{t,\gamma}^c).
	\end{equation}
	For the first probability in the right-hand side of \eqref{eq:split-dev} we estimate
	\begin{equation}
		\label{eq:est1}
		\begin{split}
			\mathbf{P}(\cD_t^A-p_t>\delta,\,\cE_{t,\gamma}^c)&=\mathbf{P}\left(\frac{1}{K_n}\sum_{e \in \mathcal{I}}
			\ind_{e\in D_t}+\frac{1}{K_n}\sum_{e \in A\setminus\mathcal{I}}\ind_{e\in D_t}-p_t>\delta,\,\cE_{t,\gamma}^c \right)\\
			&\le\mathbf{P}\left(\frac{1}{K_n}\sum_{e \in A\setminus\mathcal{I}}\ind_{e\in D_t}-p_t>\delta-\gamma,\,\cE_{t,\gamma}^c \right)\\
			&\le \mathbf{P}\left(\frac{1}{K_n-|\mathcal{I}|}\sum_{e \in A\setminus\mathcal{I}}\ind_{e\in D_t}>p_t+\delta-\gamma,\,\cE_{t,\gamma}^c \right)\\
			&\le \mathbf{P}\left({\rm Bin}(K_n-|\mathcal{I}|,p_t)>\bigg(1+\frac{\delta-\gamma}{p_t}\bigg)p_t(K_n-|\mathcal{I}|) \,,\cE_{t,\gamma}^c\right)\\
			&\le \ee^{-\Theta(\log^2 n)}.
		\end{split}
	\end{equation}
	Here, in the first inequality we use that  $|\mathcal{I}|/K_n\le \gamma$ under $\cE_{t,\gamma}^c$, in the third inequality we use Lemma \ref{lemma:nega-dep}, while the last inequality follows from the Chernoff bound by choosing $\gamma=\delta/2$. By the same argument, for the second quantity in the right-hand side of \eqref{eq:split-dev} we estimate
	\begin{equation}
		\label{eq:est2}
		\begin{split}
			\mathbf{P}(p_t-\cD_t^A>\delta,\,\cE_{t,\gamma}^c)&\le \mathbf{P}\left(p_t-\frac{1}{K_n}
			\sum_{e \in A\setminus\mathcal{I}}\ind_{e\in D_t}>\delta,\,\cE_{t,\gamma}^c \right)\\
			&\le \mathbf{P}\left(\frac{1-\gamma}{K_n-|\mathcal{I}|}\sum_{e \in A\setminus\mathcal{I}}
			\ind_{e\in D_t}<p_t-\delta,\,\cE_{t,\gamma}^c \right)\\
			&= \mathbf{P}\left(\sum_{e \in A\setminus\mathcal{I}}\ind_{e\in D_t}<\bigg(1-\frac{\delta-p_t\gamma}{p_{t}(1-\gamma)}\bigg)
			p_t(K_n-|\mathcal{I}|),\,\cE_{t,\gamma}^c \right)\\
			&\le  \mathbf{P}\left({\rm Bin}(K_n-|\mathcal{I}|,p_t)<\bigg(1-\frac{\delta-p_t\gamma}{p_{t}(1-\gamma)}\bigg)
			p_t(K_n-|\mathcal{I}|),\,\cE_{t,\gamma}^c \right)\\
			&\le \ee^{-\Theta(\log^2n)}.
		\end{split}
	\end{equation}
	Hence \eqref{eq:finalgoal} follows via \eqref{eq:est1} and \eqref{eq:est2}.
\end{proof}

\begin{proof}[Proof of Lemma \ref{lemma:concentration_crw}]
	The proof comes in several steps. For simplicity, we suppress the dependence on $\gamma$ from the notation.
	
	\medskip\noindent
	{\bf 1.} For a vertex $x\in[n]$, we call $\cP_x$ the subset of times $[0,t]$ in which the random walk starting at $x$ moves. Put
	\begin{equation*}
		\cP^{\rm tot}=\left\{\forall x\in[ n], \:\:|\cP_{x}|\le n^{\varepsilon/2} t \right\} 
	\end{equation*}
	and note that
	\begin{equation*}
		\Prob\left(\cP^{\rm tot}\right) \ge 1- n\Pr\left({\rm Poisson}(t)>n^{\varepsilon/2} t \right) \ge 1- n^{-a},
		\qquad \text{for all } a>0,
	\end{equation*}
	where the last inequality holds for all $n$ large enough. Therefore it is enough to show that, for all $a>0$,
	\begin{equation*}
		\Prob_{\otimes_{K_n} \nu}\left(\cE_{t,\gamma}\cap\cP^{\rm tot}\right) \le n^{-a} \qquad whp.
	\end{equation*}
	Due to the maximality of the set $\mathcal{I}$ we have
	\begin{equation}
		\label{eq:first-bound}
		\begin{aligned}
			\Prob_{\otimes_{K_n} \nu}\left( \cE_{t,\gamma}\cap\cP^{\rm tot}\right)
			&\leq \sum_{ I\subset A\colon\, | I|\ge \gamma K_n}\Prob_{\otimes_{K_n} \nu}\left(\mathcal{I}= I,\:\cP^{\rm tot}\right)\\
			&\le \sum_{C=\gamma K_n}^{K_n}\binom{K_n}{C}\Prob_{\otimes_{C} \nu}\left(\max_{e\in  I}\min_{f\in  I\setminus\{e \}}\tau^{e,f}
			\leq t,\:\cP^{\rm tot}\right),
		\end{aligned}
	\end{equation}
	where in the second line, since edges in $A$ are i.i.d.\ with law $\nu$, and with a slight abuse of notation, we can take for $I$ the set of the first $C$ edges of $A$. Given $I$, associate to it one possible collection of first interactions between its vertices. Call this set 
	\begin{equation*}
		B_I=\{(1,j_1),\dots,(C,j_C)\},
	\end{equation*}
	where $(\ell,j_\ell)$ stands for the event that the first edge with which the edge $e_{\ell}$ interacts is $e_{j_\ell}$. 
	
	\medskip\noindent
	{\bf 2.}	
	For every choice of $B_I$, we denote by $\cE_t(I,B_I)$ the event $\cE_{t,\gamma}$ in which the set of interacting edges is $I$ and $B_I=\{(1,j_1),\dots,(C,j_C) \}$ is the set of first interactions. Then
	\begin{equation}
		\label{fff}
		\Prob_{\otimes_C\nu}\left(\cE_t(I,B_I)\cap\cP^{\rm tot}\right) \leq \Prob_{\otimes_C \nu}\left(\tau^{e_{1},e_{j_1}}\le t,\cP^{\rm tot}(1)\right)
		\prod_{\ell=2}^C\Prob_{\otimes_C \nu}\left(\tau^{e_{\ell},e_{j_\ell}}\le t,\:\cP^{\rm tot}(\ell)\mid \cF_{\ell-1}\right),
	\end{equation}
	where, for $\ell\in\{1,\dots, C\}$, we put
	\begin{equation*}
		\cP^{\rm tot}(\ell):=\{|\cP_{e_{\ell}^-}|\le n^{\varepsilon/2} t\}
		\cap\{|\cP_{e_{\ell}^+}|\le n^{\varepsilon/2} t\}
		\cap\{|\cP_{e_{j_\ell}^-}|\le n^{\varepsilon/2} t\}
		\cap\{|\cP_{e_{j_\ell}^+}|\le n^{\varepsilon/2} t\}
	\end{equation*}
	and
	\begin{equation*}
		\cF_\ell:=\cap_{\iota\le \ell}\{\tau^{e_{\iota},e_{j_\iota}}\le t \}\cap \cP^{\rm tot}(\iota),\qquad \ell=\{2,\dots, C\}.
	\end{equation*}
	By construction, regardless of the specific ordering of the elements in the set $B_I$, there is a collection of at least $C/2$ indices $\ell\in\{1,\dots, C \}$ such that
	\begin{equation*}
		|\Xi_\ell|:=\big|\{e_{\ell},e_{j_\ell} \}\cap \{e_{1},e_{j_1},\dots,e_{{\ell-1}},e_{j_{\ell-1}} \}  \big|\le 1.
	\end{equation*}
	We claim that, for every $\ell\in\{1,\dots,C\}$ for which $|\Xi_\ell|=0$, 
	\begin{equation}
		\label{eee1}
		\Prob_{\otimes_C \nu}\left(\tau^{e_{\ell},e_{j_\ell}}\le t\mid \cF_{\ell-1}, |\Xi_\ell|=0 \right)
		\le \Prob_{\nu \otimes \nu}\left(\tau^{e,f}\le t\right) + \frac{4C}{n} = O\left(\frac{C+t}{n}\right).
	\end{equation}
	Indeed, 
	\begin{equation*}
		\begin{aligned}
			&\Prob_{\otimes_C \nu}\left(\tau^{e_{\ell},e_{j_\ell}}\le t\mid \cF_{\ell-1}, |\Xi_\ell|=0 \right)\\
			&\leq \Prob_{\otimes_C \nu}\left(\tau^{e_{\ell},e_{j_\ell}}\le t, \{e_{\ell}^-,e_{\ell}^+,e_{j_\ell}^-,e_{j_\ell}^+\}
			\cap \cup_{\iota<\ell}\{e_{\iota}^-,e_{\iota}^+,e_{j_\iota}^-,e_{j_\iota}^+ \}=\emptyset \mid \cF_{\ell-1}, |\Xi_\ell|=0 \right) 
			+ \frac{4\ell}{n}\\
			&= \Prob_{\otimes_{K_n} \nu}\left(\tau^{e_{i_\ell},e_{j_\ell}}\le t, \{e_{\ell}^-,e_{\ell}^+,e_{j_\ell}^-,e_{j_\ell}^+\}
			\cap \cup_{\iota<\ell}\{e_{\iota}^-,e_{\iota}^+,e_{j_\iota}^-,e_{j_\iota}^+ \}=\emptyset  \right) + \frac{4\ell}{n}\\
			&\leq \Prob_{\otimes_
				{K_n} \nu}\left(\tau^{e_{i_\ell},e_{j_\ell}}\le t\right)+\frac{4C}{n}.
		\end{aligned}
	\end{equation*}
	Therefore, by \eqref{ccc}, we conclude that, under $|\Xi_\ell|=0$,
	\begin{equation}
		\label{eee}
		\Prob_{\otimes_C \nu}\left(\tau^{e_{\ell},e_{j_\ell}}\le t\mid \cF_{\ell-1}, |\Xi_\ell|=0  \right)
		\le 4\Prob_{\pi\otimes\pi}(\tau_{\rm meet}^{x,y}\le t)+\frac{4C}{n},
	\end{equation}
	and \eqref{eee1} follows by inserting \eqref{eq:ccc2} into \eqref{eee}.
	
	\medskip\noindent
	{\bf 3.}
	Assume now that for some $\ell\in\{1,\dots, C\}$ we have $|\Xi_\ell|=1$. More precisely, under $\cF_{\ell-1}\cap \left\{|\Xi_\ell|=1\right\}$, one among $e_{\ell}$ and $e_{j_\ell}$ is still unsampled at step $\ell$. Without loss of generality, assume that $e_{\ell}$ is such an unsampled random edge. By the same argument used to derive \eqref{eee}, and putting
	\begin{equation*}
		\cJ_\ell:=\left\{\{e_{\ell}^-,e_{\ell}^+ \}\bigcap \cup_{\iota< \ell}\{e_{\iota}^-,e_{\iota}^+,e_{j_\iota}^-,e_{j_\iota}^+ \}
		=\emptyset \right\},
	\end{equation*}
	we bound 
	\begin{equation*}
		\begin{split}
			\Prob_{\otimes_C \nu}\left(\tau^{e_{\ell},e_{j_\ell}}\le t,\cP^{\rm tot}(\ell)\mid \cF_{\ell-1}, |\Xi_\ell|=1 \right)
			&\le 2\:\Prob_{\otimes_C \nu}\left(\tau^{e_{\ell},e_{j_\ell}}\le t, \cP^{\rm tot}(\ell),\cJ_\ell \mid \cF_{\ell-1}, |\Xi_\ell|=1 \right)+\frac{8C}{n}.
		\end{split}
	\end{equation*}
	Arguing as in \cite[Proposition 4.5]{Oaop}, we get that, under $|\Xi_\ell|=1$ with $e_\ell$ unsampled at step $\ell$,
	\begin{equation}
		\label{ddd}
		\begin{aligned}	
			&\Prob_{\otimes_C \nu}\left(\tau^{e_{\ell},e_{j_\ell}}\le t,\cP^{\rm tot}(\ell),\cJ_\ell \:\big\rvert\: \cF_{\ell-1}, |\Xi_\ell|=1 \right)\\
			&\le \Prob_{\otimes_C \nu}\left(\tau^{e_{\ell},e_{j_\ell}}\le t \:\big\rvert\: \cF_{\ell-1},\; |\Xi_\ell|=1, \:  \cJ_\ell,\:\cP^{\rm tot}(\ell)\right)\\
			&= \Prob_{\otimes_C\nu}\left(\tau_{\rm meet}^{e_{\ell}^-,e_{j_\ell}^-}\wedge \tau_{\rm meet}^{e_{\ell}^+,e_{j_\ell}^-}
			\wedge \tau_{\rm meet}^{e_{\ell}^-,e_{j_\ell}^+}\wedge \tau_{\rm meet}^{e_{\ell}^+,e_{j_\ell}^+}\le t \:\big\rvert\: 
			\cF_{\ell-1}, \; |\Xi_\ell|=1 ,\: \cJ_\ell,\:\cP^{\rm tot}(\ell)\right)\\
			&\le 4\Prob_{\otimes_C\nu}\left(\tau_{\rm meet}^{e_{\ell}^-,e_{j_\ell}^-}\le t \:\big\rvert\: \cF_{\ell-1},\: |\Xi_\ell|=1 ,\: \cJ_\ell,\:\cP^{\rm tot}(\ell)\right)\\		
			&= 4\Expect\left[\Prob_{\otimes_C\nu}\left(\tau_{\rm meet}^{e_{\ell}^-,e_{j_\ell}^-}\le t \:\big\vert\: \cF_{\ell-1},\: |\Xi_\ell|=1 ,\: 
			\cJ_\ell,\:\cP^{\rm tot}(\ell),\:\cP_{e_{\ell}^-},\:\cP_{e_{j_\ell}^-}\right) \bigg \vert  \cF_{\ell-1},\: |\Xi_\ell|=1 ,\: \cJ_\ell, \: \cP^{\rm tot}(\ell) \right],
		\end{aligned}
	\end{equation}
	where the second inequality follows from the equality of the marginals. Note that, conditionally on $\cF_{\ell-1}$ and $\cJ_\ell$, for all $s\le t$ the position of the random walk starting at $e_\ell^-$ is distributed according to $\pi$. On the other hand, the position of the random walk starting at $e_{j_\ell}^-$ at time $s$ is distributed in a non-trivial way. Moreover, the positions of the two random walks at time $s$ are independent. Hence
	\begin{equation}
		\label{ggg}
		\begin{split}
			&\Prob_{\otimes_C\nu}\left(\tau_{\rm meet}^{e_{\ell}^-,e_{j_\ell}^-}	\le t \:\big\rvert\: \cF_{\ell-1},\: |\Xi_\ell|=1 ,\:  
			\cJ_\ell,\:\cP^{\rm tot}(\ell),\:\cP_{e_{\ell}^-},\:\cP_{e_{j_\ell}^-}\right)\\
			&\le \sum_{s\in \cP_{e^-_{\ell}}\cup\cP_{e^-_{j_\ell}}}\Prob_{\otimes_C\nu}\left(X^{(e_{\ell}^-)}_s=X_s^{(e_{j_\ell}^-)} 
			\:\big\rvert\: \cF_{\ell-1}, \: |\Xi_\ell|=1 ,\: \cJ_\ell,\:\cP^{\rm tot}(\ell),\:\cP_{e_{\ell}^-},\:\cP_{e_{j_\ell}^-}\right)\\
			&\le \sum_{s\in \cP_{e^-_{\ell}}\cup\cP_{e^-_{j_\ell}}}\max_{\lambda}\sum_{x\in[n]}\lambda (x)\pi(x)
			\le \frac{|\cP_{e^-_{\ell}}\cup\cP_{e^-_{j_\ell}}|}{n}.
		\end{split}
	\end{equation}
	Inserting \eqref{ggg} into the expectation in \eqref{ddd}, we obtain
	\begin{equation}
		\label{hhh}
		\begin{split}
			&\Prob_{\otimes_C \nu}\left(\tau^{e_{i_\ell},e_{j_\ell}}\le t,\cP^{\rm tot}(\ell),\cJ_\ell \:\big\rvert\: \cF_{\ell-1},\: |\Xi_\ell|=1 \right)
			\le  \frac{8 n^{\varepsilon/2} t}{n}.
		\end{split}
	\end{equation}
	Inserting \eqref{eee}--\eqref{hhh} into \eqref{fff}, we obtain
	\begin{equation}\label{eq:GammaC}
		\Prob_{\otimes_C \nu}\left(\cE_t(I,B_I)	\cap\cP^{\rm tot}\right)\le \left(\frac{8(n^{\varepsilon/2} t + C)}{n} \right)^{C/2}
		\le \left(\sqrt{\frac{8(n^{\varepsilon/2} t + K)}{n}} \right)^{C} =: \Gamma^{C}.
	\end{equation}
	
	\medskip\noindent
	{\bf 4.}
	By \eqref{eq:first-bound}, \eqref{eq:GammaC} and the fact that there are $(C-1)^{C}$ different ways to select the set $B_I$, we are now ready to conclude that $\whp$
	\begin{equation*}
		\begin{aligned}
			\Prob_{\otimes_{K_n} \nu}\left( \cE_{t,\gamma}\cap\cP^{\rm tot} \right)
			&\le \sum_{C=\gamma K_n}^{K_n}\binom{K_n}{C} C^C \Gamma^{C}
			\le \sum_{C=\gamma K_n}^{K_n} (K_n^2\Gamma)^C.
		\end{aligned}
	\end{equation*}
	Since, by our choice of $t$ and $K_n$,
	\begin{equation*}
		K_n^2\Gamma\le n^{-\varepsilon/5}
	\end{equation*}
	for all $n$ large enough, we obtain
	\begin{equation*}
		\begin{aligned}
			\Prob_{\otimes_{K_n} \nu}\left( \cE_{t,\gamma}\cap\cP^{\rm tot} \right)&\le K_n n^{-\frac{\varepsilon\gamma}{5} K_n},
		\end{aligned}
	\end{equation*}
	concluding the proof.
\end{proof}


\appendix

\section{Auxilliary facts for c\`adl\`ag processes}
\label{app:aux}

In the next lemma, for an arbitrary $T>0$, we let $[0,T] \times \mathcal{D}[0,T]$ denote the product of $[0,T]$ with the c\`adl\`ag space $\mathcal{D}[0,T]$ endowed with the $J_1$-Skorohod topology.

\begin{lemma}
	\label{lemma:continuity}
	Consider the function $h\colon\, [0,T] \times \mathcal{D}[0,T] \to \R$ defined via $h(s, \phi) = \phi(s)$. Every point in $[0,T] \times \mathcal{C}[0,T]$ is a continuity point of $h$.
\end{lemma}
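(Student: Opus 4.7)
The plan is to reduce the problem to a purely uniform-convergence statement by invoking a classical fact about the $J_1$-Skorokhod topology: whenever a sequence $(\phi_n)_{n\in\N}$ in $\mathcal{D}[0,T]$ converges in $J_1$ to a limit $\phi$ that itself belongs to $\mathcal{C}[0,T]$, the convergence is automatically uniform on $[0,T]$. This follows from the definition of $J_1$-convergence: there exist continuous strictly increasing bijections $\lambda_n$ of $[0,T]$ with $\|\lambda_n-\mathrm{id}\|_\infty\to 0$ and $\|\phi_n\circ\lambda_n-\phi\|_\infty\to 0$, and when $\phi$ is (uniformly) continuous on $[0,T]$, the composition $\phi\circ\lambda_n$ converges uniformly to $\phi$, whence $\|\phi_n-\phi\|_\infty\to 0$.

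With this reduction in hand, I would fix $(s,\phi)\in[0,T]\times\mathcal{C}[0,T]$ and an arbitrary sequence $(s_n,\phi_n)\to(s,\phi)$ in the product topology, and estimate
\[
|h(s_n,\phi_n)-h(s,\phi)|=|\phi_n(s_n)-\phi(s)|\leq \|\phi_n-\phi\|_\infty+|\phi(s_n)-\phi(s)|.
\]
The first term tends to zero by the uniform-convergence reduction above, and the second term tends to zero by continuity of $\phi$ at $s$. Since the sequence was arbitrary, $h$ is continuous at $(s,\phi)$.

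The only delicate step is the appeal to uniform convergence of $J_1$-limits with continuous target, which is a standard and well-documented property of Skorokhod space (see, e.g., Billingsley, \emph{Convergence of Probability Measures}); everything else is a two-line triangle-inequality argument, so no genuine obstacle is anticipated.
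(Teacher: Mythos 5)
Your proposal is correct and follows essentially the same route as the paper: both establish that $J_1$-convergence to a continuous limit upgrades to uniform convergence via the time-changes $\lambda_n$ and uniform continuity of $\phi$, and then conclude $\phi_n(s_n)\to\phi(s)$ by a triangle-inequality estimate. The only cosmetic difference is that you state the $J_1$ condition as $\|\phi_n\circ\lambda_n-\phi\|_\infty\to 0$ while the paper uses the equivalent form $\|\phi_n-\phi\circ\lambda_n\|_\infty\to 0$.
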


\begin{proof}
	Assume that $(s_{n}, \phi_{n})$ is a sequence that converges to $(s, \phi)$, with $\phi \in \mathcal{C}[0,T]$. We need to verify that $h(s_{n}, \phi_{n})$ converges to $h(s, \phi)$. Indeed, if $\phi_{n}$ converges to $\phi$, then there exists a sequence of increasing functions $\lambda_{n}: [0,T] \to [0,T]$ with $\lambda_{n}(0) = 0$ and $\lambda_{n}(T)=T$ such that $\lambda_{n}(u) \to u$ and $\phi_{n}(u) -\phi(\lambda_{n}(u)) \to 0$ uniformly over $u \in [0,T]$. From this we obtain
	\begin{equation*}
		||\phi_{n}-\phi||_{\infty} \leq ||\phi_{n}-\phi \circ \lambda_{n}||_{\infty}+||\phi\circ \lambda_{n}-\phi||_{\infty} \to 0,
	\end{equation*}
	since any function in $\mathcal{C}[0,T]$ is uniformly continuous. In particular, $\phi_{n}$ converges to $\phi$ uniformly, which readily implies that $\phi_{n}(s_{n}) \to \phi(s)$ and concludes the proof of the lemma.
\end{proof}

\begin{proposition}
	\label{prop:convergence}
	Assume that $(X^{n}_{u})_{u \in [0,T]}$ is a sequence of c\`adl\`ag processes that converge in distribution to a process $(X_{u})_{u \in [0,T]}$ that has almost surely continuous trajectories. For any sequence $t_{n} \to t \in [0,T]$, $X^{n}_{t_{n}}$ converges in distribution to $X_{t}$.
\end{proposition}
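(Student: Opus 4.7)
The plan is to deduce Proposition~\ref{prop:convergence} from Lemma~\ref{lemma:continuity} via the continuous mapping theorem applied to the evaluation functional $h(s,\phi) = \phi(s)$ on the product space $[0,T] \times \mathcal{D}[0,T]$.

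First, I would upgrade the marginal convergence $X^n \Rightarrow X$ in $\mathcal{D}[0,T]$ to the joint convergence
\begin{equation*}
	(t_n, X^n) \Longrightarrow (t, X) \qquad \text{in } [0,T] \times \mathcal{D}[0,T].
\end{equation*}
Since $t_n$ is deterministic and converges to the deterministic limit $t$, this follows from a standard Slutsky-type argument: the sequence $(t_n, X^n)$ is tight because both marginals are tight, and any subsequential weak limit must have first marginal equal to $\delta_t$ and second marginal equal to the law of $X$, with the first marginal degenerate so that joint distribution is forced to be the product of $\delta_t$ and $\mathcal{L}(X)$.

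Next, I would invoke the continuous mapping theorem. By Lemma~\ref{lemma:continuity}, the evaluation map $h$ is continuous at every point of $[0,T] \times \mathcal{C}[0,T]$. By hypothesis, $X$ has almost surely continuous trajectories, so
\begin{equation*}
	\Pr\bigl((t,X) \in [0,T] \times \mathcal{C}[0,T]\bigr) = 1,
\end{equation*}
meaning the limiting law puts full mass on the continuity set of $h$. The continuous mapping theorem (see e.g.\ \cite[Theorem~2.7]{billingsley1999convergence}) then yields
\begin{equation*}
	h(t_n, X^n) = X^n_{t_n} \Longrightarrow h(t,X) = X_t,
\end{equation*}
which is the conclusion.

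The only subtlety is the first step, namely the joint convergence. This is standard because one of the two components converges in probability to a constant, so no real obstacle arises; alternatively, one can avoid it altogether by defining the (deterministic) sequence $\tilde h_n(\phi) := \phi(t_n)$ and directly checking that if $\phi_n \to \phi$ in the Skorokhod $J_1$ topology with $\phi$ continuous, then $\tilde h_n(\phi_n) = \phi_n(t_n) \to \phi(t) = h(t,\phi)$ (which is exactly the computation in the proof of Lemma~\ref{lemma:continuity} applied to the sequence $(t_n,\phi_n)$), and then use an extended continuous mapping theorem. Either route is routine, so no step is truly hard.
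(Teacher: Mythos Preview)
Your proposal is correct and follows essentially the same approach as the paper: establish joint convergence $(t_n,X^n)\Rightarrow(t,X)$ using that $t_n$ is deterministic, then apply the continuous mapping theorem with the evaluation map $h$ via Lemma~\ref{lemma:continuity}. If anything, you are more careful than the paper in justifying the joint convergence step, which the paper simply asserts.
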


\begin{proof}
	From the assumptions of the proposition, it follows that $((X^{n}_{u})_{u \in [0,T]}, t_{n})_{n \in \N}$ converges in distribution to $((X_{u})_{u \in [0,T]}, t)$. Let $h\colon\, [0,T] \times \mathcal{D}[0,T] \to \R$ as in Lemma~\ref{lemma:continuity}. Note that, since $(X_{u})_{u \in [0,T]}$ is almost surely continuous, it follows that $X^{n}_{t_{n}} = h(t_{n}, (X^{n}_{u})_{u \in [0,T]})$ converges in distribution to $X_{t} = h(t, (X_{u})_{u \in [0,T]})$.
\end{proof}


\bibliographystyle{apalike}
\bibliography{bibFVTL.bib}


\end{document}